\begin{document}

\title{Parametric Topology Optimization with Multi-Resolution Finite Element Models}

\author[1]{Vahid Keshavarzzadeh$^{*,}$}

\author[1,2]{Robert M. Kirby}

\author[1,3]{Akil Narayan}

\authormark{Keshavarzzadeh {et al.}}

\address[1]{\orgdiv{Scientific Computing and Imaging Institute}, \orgname{University of Utah}, \orgaddress{\state{Utah}, \country{USA}}}

\address[2]{\orgdiv{School of Computing}, \orgname{University of Utah}, \orgaddress{\state{Utah}, \country{USA}}}

\address[3]{\orgdiv{Department of Mathematics}, \orgname{University of Utah}, \orgaddress{\state{Utah}, \country{USA}}}

\corres{{$^*$}Vahid Keshavarzzadeh, \email{vkeshava@sci.utah.edu}}



\abstract[Summary]{We present a methodical procedure for topology optimization under uncertainty with multi-resolution finite element models. We use our framework in a bi-fidelity setting where a coarse and a fine mesh corresponding to low- and high-resolution models are available. The inexpensive low-resolution model is used to explore the parameter space and approximate the parameterized high-resolution model and its sensitivity where parameters are considered in both structural load and stiffness. We provide error bounds for bi-fidelity finite element (FE) approximations and their sensitivities and conduct numerical studies to verify these theoretical estimates. We demonstrate our approach on benchmark compliance minimization problems where we show significant reduction in computational cost for expensive problems such as topology optimization under manufacturing variability while generating almost identical designs to those obtained with single resolution mesh. We also compute the parametric Von-Mises stress for the generated designs via our bi-fidelity FE approximation and compare them with standard Monte Carlo simulations. The implementation of our algorithm which extends the well-known 88-line topology optimization code in MATLAB is provided.}

\keywords{Multi-Resolution Finite Elements, Parametric Topology Optimization, Bi-Fidelity Error Estimate, Manufacturing Variability}

\maketitle


\section{Introduction}\label{sec1}

Topology optimization is a systematic design framework for the distribution of given material resources within a specified spatial domain to achieve the maximum stiffness. This technique spawns from a seminal paper by Bends{\o}e and Kikuchi~\cite{Bendsoe88} in which  the structure layout, instead of structure boundaries as done in shape optimization, is optimized. Since then, in addition to solid mechanics~\cite{SIGMUND19971037} topology optimization has been developed and extended to various fields such as heat conduction, fluid dynamics, and multi-physics simulations~\cite{ALEXANDERSEN2018138,Dilgen2018,Lundgaard2018,Behrou17_ECS,Behrou17_IJNME}. A majority of existing works focus on deterministic analysis and optimization for such designs. However, the design performance varies due to inherent uncertainties in different parameters such as loading, boundary conditions, material properties and geometry.

This performance deficiency can be overcome by incorporating uncertainty analysis in the optimization process as done in robust design optimization (RDO) \cite{MartinezFrutos16, allaire15,deGournay08,KESHAVARZZADEH201647,KESHAVARZZADEH2017120} by minimizing the performance variation and reliability based design optimization (RBDO) \cite{Torii2017,MARTINEZFRUTOS2018180} by constraining the failure probability. The computational complexity is the outstanding challenge in these approaches due to requiring a considerable number of expensive simulations to capture variations in parameter/stochastic space.  Multi-resolution finite element models have been used in a number of studies to enhance the computational efficiency of topology optimization~\cite{Nguyen2010,PARK2015571,Filipov2016,KIM2000}. These multi-resolution topology optimization approaches are explored within a deterministic framework i.e. when the focus is only on a limited number of deterministic simulations throughout different mesh resolutions.

In this work, we adopt a different perspective in multi-resolution topology optimization and use coarse and fine finite element meshes within a parametric/stochastic framework. We use the inexpensive low-resolution model to traverse the parameter space and use that information to predict the stochastic response and sensitivity of the expensive high-resolution model. In this way the stochastic analysis is primarily performed via a low-resolution model which drastically decreases the computational complexity. Our method is non-intrusive i.e. it is implemented with minimal modification to the existing codes for topology optimization. We present our approach in the context of a generic density based topology optimization; however it is similarly applicable to a level-set based method.
The implementation of our approach which is the extension of ``Efficient topology optimization in MATLAB using 88 lines of code''~\cite{Andreassen2011} is provided in~\cite{Vahid2018}. We also provide error bounds for the bi-fidelity construction of compliance and its sensitivity which serves as a certificate for the convergence of our parametric topology optimization approach. We provide numerical results to delineate the error estimate for compliance and its sensitivity.

The present paper is organized as follows. Section \ref{S2} briefly describes the topology optimization including its deterministic and parametric forms. The details of our multi-resolution approach is presented in Section~\ref{S3}. Section~\ref{S4} presents numerical results for topology optimization under loading and manufacturing variability in conjunction with computational cost studies. Finally Section \ref{S5} contains concluding remarks.

\section{Topology Optimization}\label{S2}

\subsection{Notation and Setup}\label{S2_1}

We use bold characters to denote matrices, vectors and multivariate quantities e.g. $\bm x$ indicates a vector of variables in the domain of a multivariate function. We denote sets with uppercase letters e.g. $P$ is a set of sample parameters.

In this paper we mainly focus on parameterized elastostatics problems with the general form of
\begin{equation}
\begin{array}{l}
  \displaystyle \mathcal{L}\{\bm u(x,\bm p) \} = f(x,\bm p),\quad x \in \Omega, \quad \bm{p} \in \bm P \\
\\
\bm u(x,\bm p) =\bm u_{b}(\bm p), \quad x \in \partial \Omega, \quad \bm{p} \in \bm P,
\end{array}
\end{equation}
where $\mathcal{L}$ denotes a linear operator which will be replaced by the generic finite element global stiffness matrix shortly, $\Omega$ is the spatial domain, and the parameter $\bm{p} \in \bm P$ later will be treated as random variables. We consider two models: low-resolution model $\bm u^{L}: \bm P \rightarrow \bm U^L$ corresponding to the coarse mesh, and high-resolution model $\bm{u}^{H}: \bm P \rightarrow \bm U^H$ corresponding to the fine mesh. Here $\bm U^L$ and $\bm U^H$ are Hilbert spaces equipped with inner products $\langle.,.\rangle^L,\langle.,.\rangle^H$, respectively. For example, if $\bm U^L$ is finite-dimensional and two elements $a, b$ of this space are represented as coordinates in the vectors $\bm{a}$, $\bm{b}$, then one way to define an inner product is
\begin{equation}
\displaystyle \langle a, b \rangle^L = \frac{\bm a^T \bm b}{{\rm{dim}}~ {\bm a}}\\
\end{equation}
where ${\rm{dim}}$ denotes dimension or the size of vector ${\rm{dim}}~ {\bm a}={\rm{dim}}~ {\bm b}$. We hereafter assume that $\bm U^L$ and $\bm U^H$ are finite-dimensional, respectively of dimensions $N^L_{dof}$ and $N^H_{dof}$. Due to our coarse mesh/fine mesh assumptions, we have $N_{dof}^L < N_{dof}^H$.

In this paper we seek to find accurate approximations to the high resolution model $\bm u^H$ by using the low resolution model $\bm u^L$ in the parameter space. We use hat notation to denote approximations, e.g., $\hat{\bm u}$ is an approximation to $\bm u$. The low resolution solution can be computed on a collection of parametric samples $\Gamma_N=\{p_1,p_2,\ldots,p_N\} \subset \bm P$ in a relatively small amount of time. We denote the collection of these solution samples and their span as
\begin{equation}
\begin{array}{l}
\bm u^L (\Gamma_N) = \big\{\bm u^L(p_1), \bm u^L(p_2),\ldots, \bm u^L(p_N)   \big\},\\
\\
\bm U^L_{\Gamma_N}= {\rm{span}}~ \bm u^L(\Gamma_N) = {\rm{span}}~ \big\{\bm u^L(p_1), \bm u^L(p_2),\ldots, \bm u^L(p_N)\big\},
\end{array}
\end{equation}
and use similar notation for $\bm u^H$. The term {\rm{span}} above denotes the subspace that is formed with any linear combination of the solution samples $\bm u^L(p_i)$. We view the collection of samples as a matrix e.g. $\bm u^L({\Gamma_N}) = [\bm u^L(p_1) \bm u^L(p_2) \ldots \bm u^L(p_N)]_{ N_{dof}^L \times N}$ in our matrix computations.

Once we compute the low-resolution FE responses on the entire sample space, we find important samples and identify coefficients which ``relate'' the important samples to the rest of samples in $\Gamma_N$. Computing the high-resolution important samples which are few,  we then use the identified coefficients to estimate the high-resolution responses on the rest of samples. Figure~\ref{figschematic} shows the schematic representation of this bi-fidelity construction.

\begin{figure}[h!]
\centering
\includegraphics[width=5in]{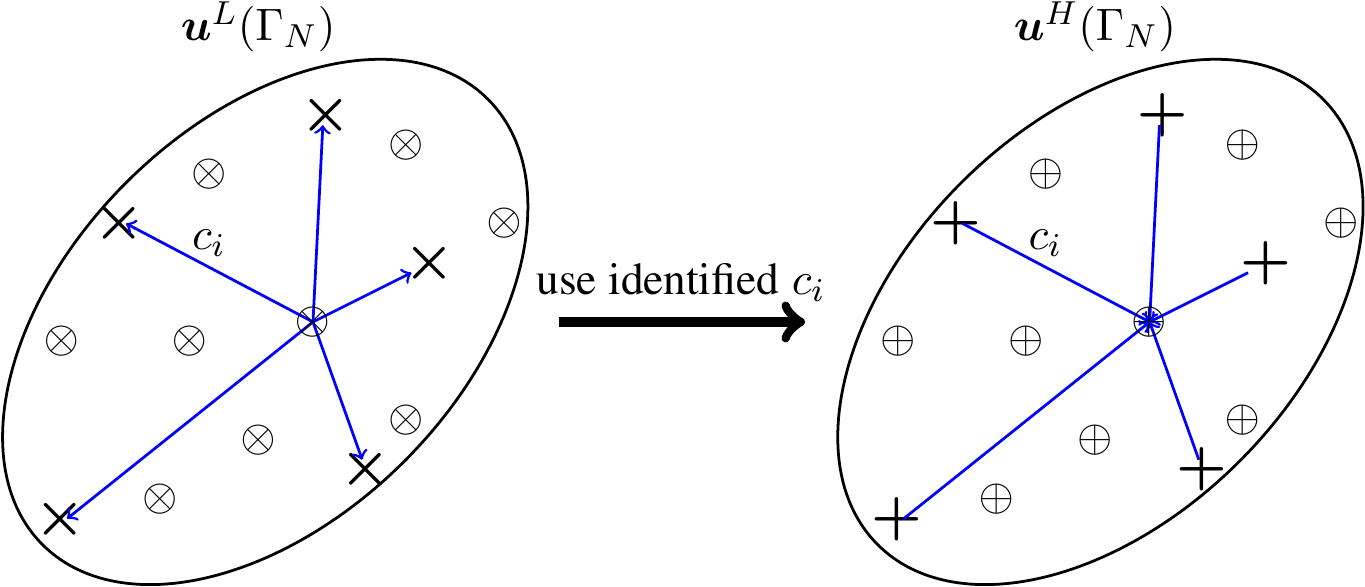}
\caption{{Schematic representation for bi-fidelity construction of parametric FE solutions. The low- and high-resolution important samples are denoted by $\times$ and $+$. To estimate the high-resolution samples on unknown locations in parameter space $\oplus$ we use the identified coefficients $c_i$ from the low-resolution sample space.}}\label{figschematic}
\end{figure}

\subsection{Deterministic Optimization}\label{S2_2}
Topology optimization in its original form is a constrained optimization problem which minimizes compliance subject to a volume constraint.  To find compliance, the structural analysis is typically performed via the Finite Element Method (FEM). We consider density based topology optimization in which the design space is characterized with element volume fractions. The optimization problem after finite element discretization is stated as
\begin{equation*}\label{to1_0_1}
  \begin{array}{r l l}
    \displaystyle \mathop{\min}_{\bm \rho} &  C(\bm \rho) = \bm{U}^{T} \bm{F} & \\
    \\
    \text{subject to} & V(\bm \rho) \leq \bar{V}
    \vspace{0.2cm}
    \\ & \bm{K}(\bm \rho) \bm{U}(\bm \rho) = \bm{F}(\bm \rho)
    \vspace{0.1cm}
    \\ &\rho_{min} \leq \bm \rho \leq 1,
   \end{array}
\end{equation*}
where $\bm{K}$, $\bm{U}$ and $\bm{F}$ denote the global finite element stiffness matrix, the displacement vector and force vector; and $\bm \rho$ is the vector of element volume fractions, $C$ is the compliance, $V$ is the volume and $0<\rho_{min} \ll 1$ is the lower bound for the volume fractions.

We process the design variables throughout the optimization in two ways: i) we impose a minimum length scale by using the filtered volume fraction to generate well-posed topology optimization formulation, and ii) we use Heaviside thresholding to generate more distinct interfaces and to model geometric variabilities.

The filtered volume fractions $\hat{\rho}$ are expressed via the cone kernel $K_F$,
\begin{equation}\label{to2}
\hat{\rho}(x_i) = \displaystyle \frac{ \sum_{j=1}^{n} K_F (x_i,x_j)  \rho(x_j)  }{ \sum_{j=1}^{n} K_F (x_i,x_j)}   , \qquad i=1,\ldots,n,
\end{equation}
where
\begin{equation}\label{to1}
K_F (x_i,x_j)  = \begin{cases} r_{min} - |x_i-x_j| & \mbox{if }  ~ |x_i-x_j|  \leq r_{min}    \\
 0 &\mbox{if } ~ |x_i-x_j|  > r_{min}.  \end{cases}
\end{equation}
In these expressions $r_{min}$ and $x_i$ denote the filter radius and the element $i$ centroid~\cite{Bruns01}. Ideally, the Heaviside step function is used to threshold the filtered volume fractions to $0$ and $1$ e.g. $\bar{\rho}=H(\hat{\rho}-0.5)$. However, to make the thresholding possible for sensitivity analysis and optimization a smooth approximation of a step function
\begin{equation}\label{to3}
\bar{\rho} = H_{\beta,\tau}(\hat{\rho}) = \displaystyle \frac{\tanh(\beta \tau)+\tanh(\beta (\hat{\rho}-\tau))}{\tanh(\beta \tau)+\tanh(\beta (1-\tau))},
\end{equation}
is used. In this approximation, $\beta$ controls the smoothness of transition and $\tau \in [0,1]$ serves as the threshold, i.e. $\lim_{\beta \rightarrow \infty} H_{\beta,\tau}(x) = H(x-\tau)$ pointwise for all $x \neq \tau$. We use the latter parameter $\tau$ to vary the boundary, i.e. geometry of the structure. It will be used as an uncertain parameter in our numerical examples to model the geometric tolerances which may arise in the manufacturing process.

Finally we use the Solid Isotropic Material with Penalization (SIMP) method to penalize intermediate volume fractions \cite{Bendsoe1989, Bendsoe1999}. As such, we compute the global stiffness matrix $\bm K$ by using the processed (thresholded-filtered) volume fractions $\bar{\rho}$,
\begin{equation}\label{to5}
\bm K =  \displaystyle \sum_{i=1}^{n} \bar{{\rho_i}}^\iota \bm K_i,
\end{equation}
where $n$ is the number of elements, $\iota=3$ is the penalization parameter and $\bm K_i$ is the nominal element $i$ stiffness matrix.
\subsection{Parametric Optimization}\label{S2_3}

We consider uncertainties in loading and geometry in our topology optimization statement problem by introducing the parameter $p$ in loading and structure stiffness
\begin{equation}\label{stc01}
  \bm{K}(\bm \rho,\bm p) \bm{U}(\bm \rho,\bm p) = \bm{F}(\bm \rho,\bm p), \quad \bm p \in \bm P
\end{equation}
where the parameters are treated as random variables. Since $U$ and subsequently $C$ are therefore random, we restate the optimization problem with a quantity of interest $Q$ which depends on statistical moments of $C$:
\begin{equation}\label{to1_0_1Q}
  \begin{array}{r l l}
    \displaystyle \mathop{\min}_{\bm \rho} & Q(\lambda)=\mu(\bm \rho) + \lambda \sigma (\bm \rho) & \\
    \\
    \text{subject to} & E[V(\bm \rho)] \leq \bar{V}
    \vspace{0.1cm}
    \\ &\rho_{min} \leq \bm \rho \leq 1,
   \end{array}
\end{equation}
where $\mu$ and $\sigma$ are the mean and standard deviation of the compliance $C$, cf. Section~\ref{S3_4}, and $\lambda$ is a weight factor associated with the standard deviation. We note that this formulation is pertinent to the case of geometric uncertainty where we consider the expected value for volume.

We consider a Karhunen-Loeve Expansion (KLE) to model uncertainties in both distributed load and spatial threshold parameters $\tau$.  We assume a covariance function
\begin{equation}\label{pcen212}
{\bf R}_{xx'}=\exp {\Big (}-{\frac {||x-x'||_2^{2}}{2l_c^{2}}}{\Big )},
\end{equation}
where $|| x-x' ||_2$ is the Euclidean distance between locations $x$ and $x'$ and $l_c$ is the correlation length.  We discretize this covariance function with $x$ and $x'$ as i) the finite element centroids in the case of spatial threshold and ii) finite element nodes that are under the influence of load in the case of distributed load to obtain the correlation matrix ${\bf R}$. We use $n_M$ first eigenvalue-eigenvector pairs $(\lambda_i, \bm \gamma_i)$ of the covariance matrix to generate the KL decomposition of a zero mean process as
\begin{equation}\label{pcen216}
Z(x,\bm p) = \gamma_0 + \sum_{i=1}^{n_M} \sqrt{\lambda_i} \bm\gamma_i(x) \varphi_i(\bm p)
\end{equation}
where $\varphi_i$ are uniform random variables and $\bm \gamma_i$ are eigenvectors, and $\gamma_0 > 0$ is a constant that is chosen to ensure a positive distributed load and avoid erratic distributions. We also post-process the random field $Z$ in the case of spatial threshold since $Z$ is not necessarily in the desirable range $Z \not\in [0,1]$. In this case to generate a value in the range $[0,1]$, we transform $Z$ into its Cumulative Distribution Function (CDF) i.e. $Z \gets \Phi(Z) \in [0,1]$ where $\Phi$ is the CDF of $Z$. For detailed discussion of this transformation see \cite{KESHAVARZZADEH2017120}. We finally use an affine map $\tau_i=a_1 Z(p_i) + a_2$ for suitable constants $a_1$ and $a_2$ to map values of $Z$ to an appropriate range.

The random processes are evaluated on Monte Carlo samples or quadrature points $Z(x,p_i)$ where each sample corresponds to a parametric load $\bm F(p_i)$ or stiffness matrix $\bm K(p_i)$. For a given finite element resolution the parametric analysis is summarized:

\begin{enumerate}
\item[--] Loading Uncertainty: For each parameter $p_i$ solve $\bm K \bm U = \bm F(p_i)$ to find the parametric $\bm U(p_i)$, parametric compliance $C(p_i)=\bm U(p_i)^T \bm K \bm U(p_i)$  and parametric compliance sensitivity $\partial C(p_i)/\partial \bm \rho = \bm \Lambda(p_i)^T (\partial \bm K(\bm \rho)/\partial \bm \rho) \bm U(p_i)$ where $\bm \Lambda = - \bm U$ is the adjoint sensitivity solution for compliance.
\item[--] Geometric Uncertainty: Solve $\bm K(p_i) \bm U = \bm F$ for each parameter $p_i$ to find the parametric displacement, compliance and its sensitivity similarly to loading uncertainty. Note that in this case, the derivative of stiffness matrix is dependent on the parameter.
\end{enumerate}

\begin{remark}
In the case of loading uncertainty, it is possible to compute the response only for the principal KL modes and use superposition to find the total response since the structure is linear. However as we mainly perform parametric analysis on the coarse mesh, solving the finite element system for a large number of samples does not pose a significant challenge.
\end{remark}

\section{Multiresolution Framework}\label{S3}

Our multi-resolution topology optimization framework has four major components summarized below. The detailed description of each component is provided in the following.

\begin{enumerate}[leftmargin=1in]
  \item[--] Translation: Given high resolution element density and parametric quantities, translate this data to the low resolution FE model cf. Section~\ref{S3_1}.
  \item[ --] Important Samples: Perform parametric analysis with the low resolution FE model on a large number of samples and determine important samples cf. Section \ref{S3_2}.
  \item[ --] Bi-Fidelity Construction: Compute interpolative coefficients in the parametric low resolution space and use those to estimate the parametric high resolution response and sensitivities cf. Section \ref{S3_3}.
  \item[--] Primal and Sensitivity Analyses: Compute statistical moments of compliance and their sensitivity and feed to optimizer cf. Section \ref{S3_4}.
\end{enumerate}

\subsection{Transition between High- and Low-resolution Models}\label{S3_1}

Our ultimate goal is to produce a design with fine resolution. The optimization progresses with the fine resolution model; however the information from the fine resolution should be translated to the low resolution model where the main FE computations are performed. Particularly, the KL model $Z(x,p_i)$ and densities $\bm \rho$ associated with the fine mesh should be translated to the low resolution mesh. The translation of KL model is trivial since it can be consistently generated for two resolutions by considering coarse and fine coordinates in the generation of covariance matrix. Similarly the KL modes for the low resolution mesh can be interpolated from the high resolution mesh.

To translate densities from the fine model to the coarse model we use a simple averaging
\begin{equation}\label{transform}
\rho^L =\frac{1}{n_H}  {\sum_{i=1}^{n_H} \rho_i^H}
\end{equation}
where $n_H$ is the number of fine mesh elements that can be placed in a coarse mesh considering we only use standard square elements. For example, four fine mesh elements with size $dx=dy=0.5$ cover a coarse element with $dx=dy=1$. Figure~\ref{figtranslate} shows the transformation of high resolution densities ($100 \times 100$ mesh) to low-resolution densities ($10 \times 10$ mesh).


\begin{figure}
  \centering
  \begin{tabular}{c c}
    \vspace{-1cm}
    \includegraphics[width=2.5in]{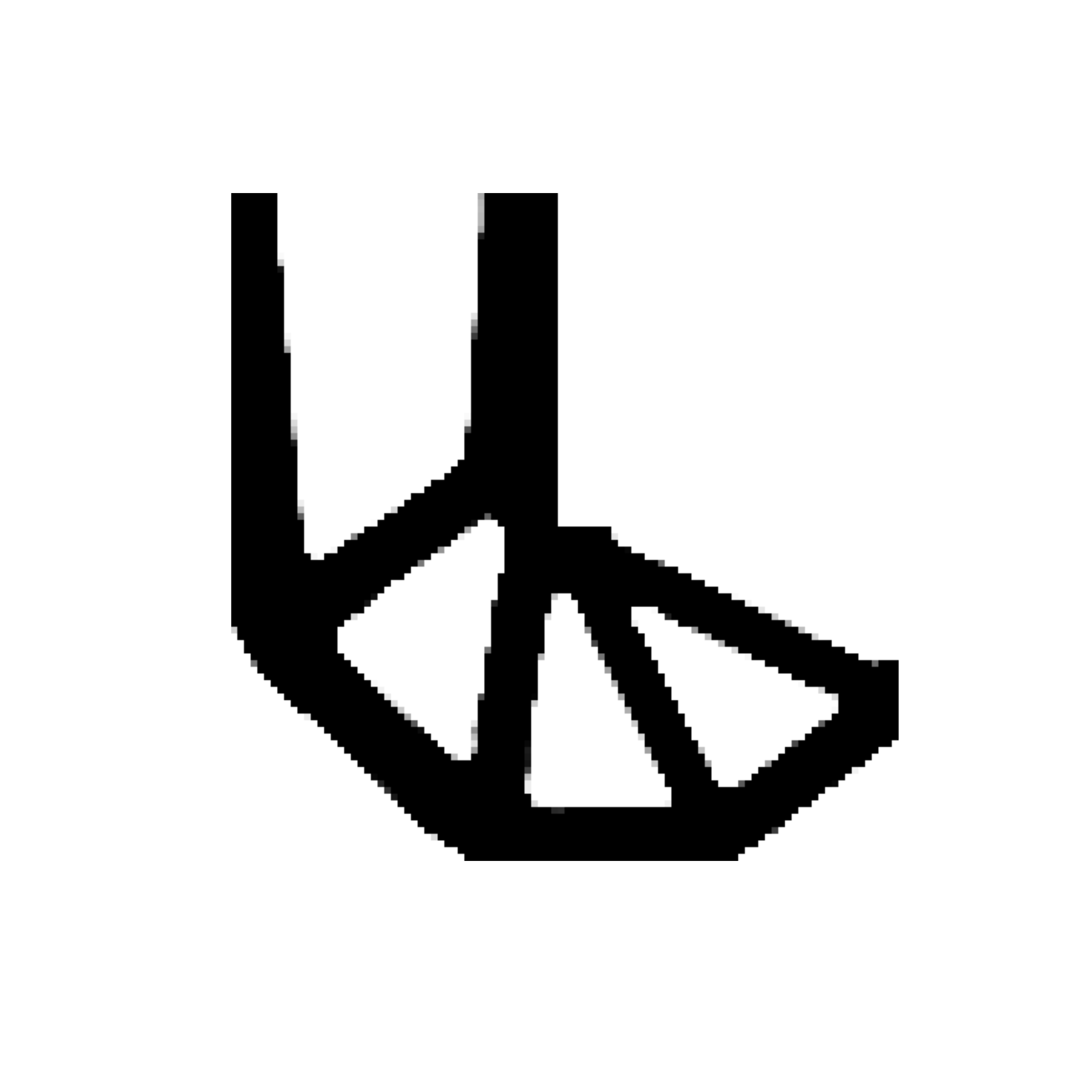} & \includegraphics[width=2.5in]{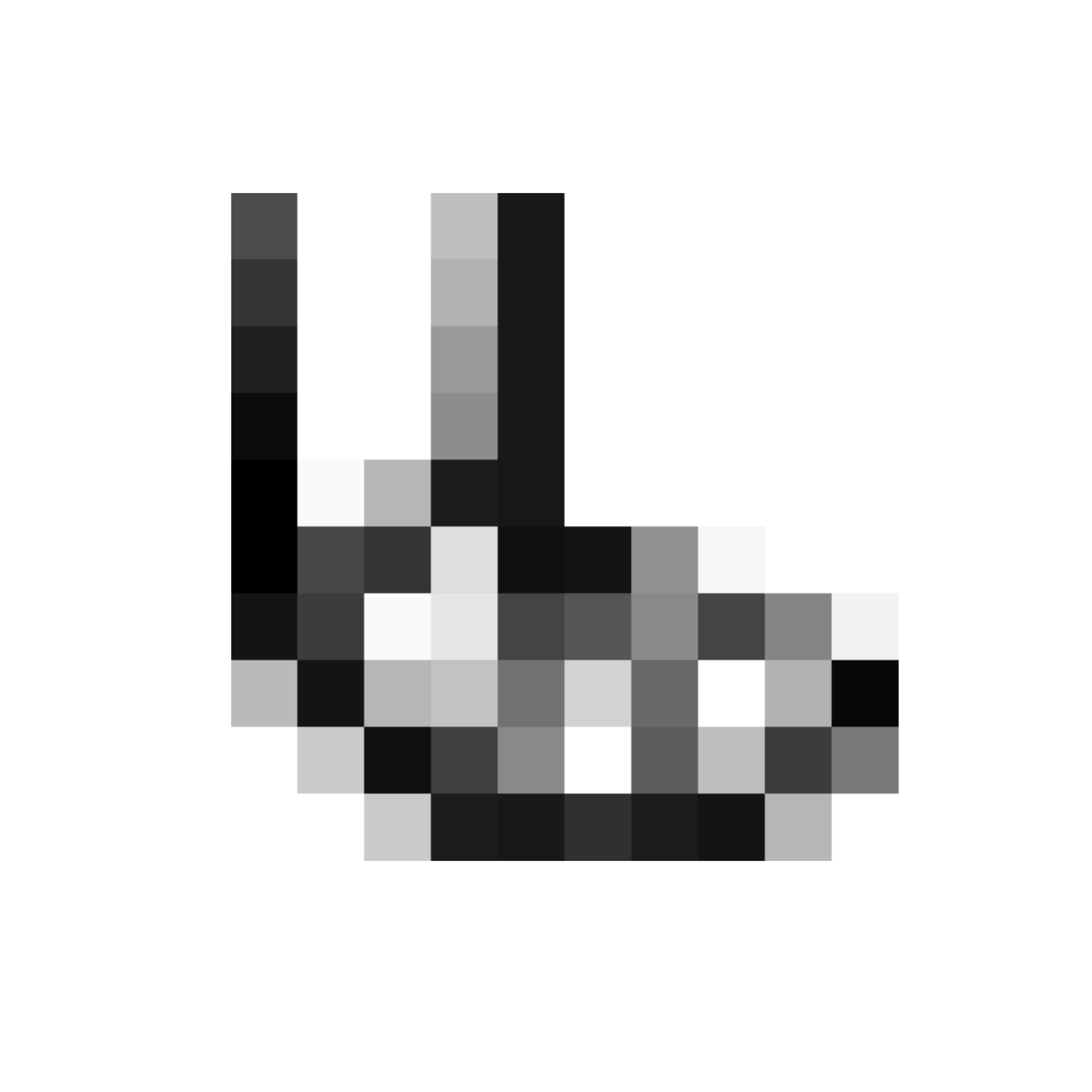} \\
    {$100 \times 100$ mesh} &  {$10 \times 10$ mesh}
  \end{tabular}
  \caption{Transformation of density $\bm \rho$ from high-resolution (left) to low-resolution (right). Each element in the right mesh is obtained by averaging the densities of $10 \times 10$ cell in the left mesh.}\label{figtranslate}
\end{figure}

\subsection{Interpolation Nodes}\label{S3_2}

Having the density and parameters associated with the low-resolution mesh, a large number of simulations $N$ is performed. A crucial step is to determine $n \ll N$ points at which the fine-resolution FEA will be performed. We determine important samples based on the fact that the span produced by the basis vectors $ \big\{\bm u^L(k_1), \bm u^L(k_2),\ldots, \bm u^L(k_n)   \big\}$ gets as close as possible to the span $\bm U^L_{\Gamma_N}$ which includes $N$ samples. To quantify closeness we define the standard distance between a function and a subspace as
\begin{equation}
d^L (x,Y) = \mathop{\inf}_{y \in Y} ||x-y||^L =  || (I-P_Y)x||^L,
\end{equation}
where $P_Y$ is the orthogonal projection operator onto a subspace $Y$, and $I$ is the identity operator.

Selection of important samples from a sample pool to minimize the above distance is a complex combinatorial problem.  Typically greedy procedures which are computationally tractable, are adopted for such problems, e.g., in the reduced basis methods (RBM)~\cite{Hesthaven15}. In particular, one may initialize the procedure with a trivial subspace $\Gamma_0=\{ \}$ and iteratively add samples to the set to maximize the distance between the newly added sample and the existing span, i.e.
\begin{equation}\label{greedy}
p_n= \mathop{ {\rm{arg~max}} }_{p \in \Gamma_N} d^L (\bm u^L(p), \bm U^L(\Gamma_{n-1}) ), \quad \Gamma_n= \Gamma_{n-1} \cup \{ p_n\}.
\end{equation}
We note that the distance is maximized such that the newly added samples cover more linearly independent directions in the span. We also note that the number of subsamples $n \ll N$ is determined based on our computational budget. Naturally using more samples results in more  accurate reconstruction of unknown samples in parameter space, as long as the number of samples does not exceed the numerical rank of $\bm u^L(\Gamma_N)$. 

This greedy procedure can be performed via a standard numerical linear algebra operations as discussed in~\cite{Narayan14}. In this reference three different linear algebraic choices namely i) column-pivoting QR decomposition, ii) full-pivoting LU decomposition, and iii) pivoted Cholesky decomposition are discussed. We adopt the column-pivoting QR decomposition in this work and use the pivot information to select the $n$ important samples of $\bm U^L$. The pivot contains the index of columns such that the first $n$ columns are linearly independent~\cite{golub1996matrix}.

We only need the integer-valued pivot in this paper, however given a matrix $\bm A \in \mathbb{R}^{m \times n}$ with $m>n$ one can use the upper-triangular matrix $\bm R$, from the output of the QR decomposition, see Equation~5.1.5 in~\cite{golub1996matrix} to compute $\bm Q$. It can then be verified that $\bm A \bm \Pi=\bm Q \bm R$ where $\bm \Pi$ is the $n \times n$ permutation matrix that contains the pivot indices. Equivalently the pivoted QR factorization can be performed via a built-in function in MATLAB denoted by \textit{qr} which we use in our numerical examples.

\subsection{Bi-Fidelity Construction}\label{S3_3}

We now have $N$ low-resolution FE solutions and have identified $n$ parameters at which we perform high resolution simulations $\{\bm u^H(p_i)\}_{i=1}^n$. We aim to find an approximation to $\bm u^H(p_i)$ on unknown samples $p_i \in \Gamma_{N-n}$.

Having $\bm u^L$ on the entire sample space, it is possible to construct the best parametric approximation to the low-resolution solutions $\{\bm u^L(p_i)\}_{i=n+1}^N$ as a function of important samples $\{\bm u^L(p_i)\}_{i=1}^n$ and subsequently use the same approximations to estimate parametric high-resolution solutions  $\{\hat{\bm u}^H(p_i)\}_{i=n+1}^N$. Precisely, the best approximation is defined such that $\|\hat{\bm u}^L(p_i) - u^L(p_i)\|,~\forall p_i \in \{p_{n+1},\ldots,p_{N}\}$ is minimized. This can be expressed as
\begin{equation}\label{eq_bf_cons_1}
  \langle \hat{\bm u}^L(p_i), \gamma(p) \rangle^L= \langle \bm u^L(p_i), \gamma(p) \rangle^L \quad \forall \gamma \in \bm U^L(\Gamma_n), \quad \hat{\bm u}^L(p_i) \in \bm U^L(\Gamma_n),
\end{equation}
which is equivalent to a projection of $\{\bm u^L(p_i)\}_{i=n+1}^N$ onto the space $\bm U^L(\Gamma_n)$ denoted by $\mathrm{P}_{\bm U_{\Gamma_n}^L}$ i.e.,
\begin{equation}\label{eq_bf_cons_2}
\mathbf{I}_{L}^{L}(\bm u^L(\Gamma_n)) := \hat{\bm u}^L(p_i) =\displaystyle \sum_{j=1}^{n} c_j \bm u^L(p_j),
\end{equation}
where we have defined the interpolation operator $\mathbf{I}_{L}^{L}$ using coefficients $c_j$ obtained from low-resolution conditions \eqref{eq_bf_cons_1} to approximate low-resolution solutions on $\{ p_i\}_{i=n+1}^N$.

The linear algebraic version of \eqref{eq_bf_cons_1} is
\begin{equation}\label{coef}
 \bm G^L \bm c^L = \bm f^L
\end{equation}
where the low-resolution Gramian $\bm G^L$ and $\bm f^L$ are expressed as
\begin{equation}\label{gramm_f}
\begin{array}{l}
\bm G^L_{ij} = \langle \bm u^L(p_i),\bm u^L(p_j) \rangle^L, \quad \forall p_i,p_j \in \{p_1,\ldots,p_n \}.\\
\\
\bm f^L = (f^L_i)_{1 \leq i \leq n}, \qquad f^L_i = \langle \bm u^L(p_i),\bm u^L(p_j) \rangle^L, \quad p_j \in \{p_{n+1},\ldots,p_N  \}.
\end{array}
\end{equation}
Note that the Gramian $\bm G^L$ is constructed once for all important samples however the right hand side $\bm f^L$ is computed for every parameter $\{ p_i\}_{i=n+1}^N$ individually. Therefore this analysis yields $N-n$ coefficient vectors $\bm c$. We also note that solving the linear system~\eqref{coef} is not challenging since the size of $\bm G^L$ is small.

Upon finding the coefficients $\bm c$ we estimates the higher resolution solutions $\{\bm u^H({p_i})\}_{i=n+1}^N$ by a lifting procedure i.e.
\begin{equation}\label{eq:MHF_1}
\hat{\bm u}^H({p_i})=\mathbf{I}_{L}^{H}(\bm u^L(\Gamma_n)) := \displaystyle \sum_{j=1}^{n} c_j \bm u^H(p_j).
\end{equation}
We now have the high-resolution response on the entire sample space i.e. we have $\bm U^H(\Gamma_N) = \{\bm u^H(p_1),\ldots,\bm u^H(p_n), \hat{\bm u}^H(p_{n+1}),\\ \ldots,\hat{\bm u}^H(p_{N})\}$. Having the high-resolution response, which is equivalent to the adjoint solution, we compute approximations $\hat{C}$ and $\partial \hat{C}/\partial \bm \rho$ to the compliance and its sensitivity for the sample space $\Gamma_N$: $\{\hat{C}(p_1),\partial \hat{C}(p_1)/\partial \bm \rho,\ldots, \hat{C}(p_N),\partial \hat{C}(p_N)/\partial \bm \rho \}$ as discussed in Section~\ref{S2_3}.

Algorithm~\ref{alg:high} summarizes the steps in Bi-Fidelity construction.

\begin{algorithm}
\caption{Bi-Fidelity Construction}\label{alg:high}

\begin{algorithmic}[1]
\State Given $\bm u^L(\Gamma_N)$ and important samples $\{p_i \}_{i=1}^n$ compute $\{\bm u^H(p_i)\}_{i=1}^n$
\State Form Gramian $ G_{ij}^L = \langle \bm u^L(p_i), \bm u^L(p_j) \rangle^L$ once $\forall p_i,p_j \in \{p_1,\ldots,p_n \}$ using Equation~\eqref{gramm_f}
\State Compute the right hand side in~\eqref{coef} for every parameter $\{p_i \}_{i=n+1}^N$ using Equation~\eqref{gramm_f}
\State Compute coefficients $\bm c^L$ in~\eqref{coef}
\State Form the interpolants for $\bm u^H$ i.e. $\hat{\bm u}^H =\sum_{i=1}^n c_i \bm u^H(p_i)$
\State Compute compliance and its sensitivity $\{C( p_i),\partial C( p_i)/\partial \bm \rho \}_{i=1}^N$ associated with the response samples $\bm U^H(\Gamma_N) = \{\bm u^H(p_1),\ldots,\bm u^H(p_n),\hat{\bm u}^H(p_{n+1}),\ldots,\hat{\bm u}^H(p_{N})\}$
\end{algorithmic}
\end{algorithm}

\subsection{Primal and Sensitivity Analyses}\label{S3_4}

To compute statistical moments in the optimization problem~\eqref{to1_0_1} we use either a quadrature rule or Monte Carlo integration. In either case, we compute $N$ higher resolution samples (or approximations to them as described previously) and subsequently compute $\mu$ and $\sigma$
\begin{equation}\label{stc01}
\begin{array}{l}
  \mu = E[C] =\displaystyle \sum_{i=1}^{N} C(p_i) w_i \\
  \\
  \sigma = \sqrt{E[(C-\mu)^2]} = \sqrt{\displaystyle \sum_{i=1}^{N} C^2(p_i) w_i - \mu^2 }
  \end{array}
\end{equation}
where $w_i=1/N$ in the case of Monte Carlo integration. The sensitivity of statistical moments are computed similarly, i.e.
\begin{equation}\label{stc_d01}
\begin{array}{l}
 \displaystyle \frac{\partial \mu} {\partial \bm \rho} = \displaystyle \sum_{i=1}^{N} \frac{\partial C(p_i)}{ \partial \bm \rho} w_i \\
  \\
  \displaystyle \frac{\partial \sigma}{\partial \bm \rho}=\frac{1}{\sigma} \Big(\displaystyle \sum_{i=1}^{N} C(p_i) \frac{\partial C(p_i)}{\partial \bm \rho} w_i - \mu \frac{\partial \mu}{ \partial \bm \rho}\Big)
  \end{array}
\end{equation}
where $\partial C(p_i)/\partial \bm \rho$ is readily available from the high-resolution displacement $u^H(p_i)$. It is noted that in this work we only focus on statistical moments in the optimization problems. Similarly one can compute probability of failure and its sensitivity either via a Bi-Fidelity Monte Carlo analysis or polynomial chaos expansion as done in~\cite{KESHAVARZZADEH2017120}.

Algorithm~\ref{alg:bif_topopt} summarizes the steps and  Figure~\ref{figflow} depicts the flowchart for the bi-fidelity topology optimization for compliance minimization.

\begin{algorithm}
\caption{Bi-Fidelity Topology Optimization}\label{alg:bif_topopt}
\begin{algorithmic}[1]
\State Given design iterate $\bm \rho$ in the high-resolution mesh, transform it to a low-resolution mesh via Equation~\eqref{transform}
\State Perform low-resolution FEA on $N$ samples where $N$ is determined from a Monte Carlo analysis or a quadrature rule and find $N$ displacement vectors $\{\bm u^L(p_i)\}_{i=1}^N$
\State Perform pivoted QR decomposition on $\bm U^L$ to select $n \ll N$ important samples $\{p_i\}_{i=1}^n$ cf. Section~\eqref{S3_2}
\State Compute low resolution Gramian and use it to find interpolative coefficients $\{c^L_i\}_{i=1}^{N-n}$ associated with the remaining samples cf. Equations~\eqref{coef},\eqref{gramm_f}
\State Perform high-resolution FEA on $n$ samples to find displacement vectors $\{\bm u^H(p_i)\}_{i=1}^n$
\State Use interpolative coefficients $\{c_i\}_{i=1}^{N-n}$ of Step $4$ to estimate the displacement on the remaining $N-n$ high-resolution samples $\{\hat{\bm u}^H(p_i)\}_{i=1}^{N-n}$
\State Compute compliance and its sensitivity $\{C(p_i),\partial C(p_i)/\partial \bm \rho \}_{i=1}^N$ associated with the displacements $\Big\{\{\bm u^H(p_i)\}_{i=1}^{n},\{\hat{\bm u}^H(p_i)\}_{i=1}^{N-n}\Big\}$
\State Compute $\mu$ and $\sigma$ and their sensitivities cf. Equations~\eqref{stc01},\eqref{stc_d01}
\State Feed the information from the previous step to gradient based optimizer and get the new high resolution design iterate, go back to Step $1$
\end{algorithmic}
\end{algorithm}

\begin{figure}[!h]
\centering
\includegraphics[width=4.5in]{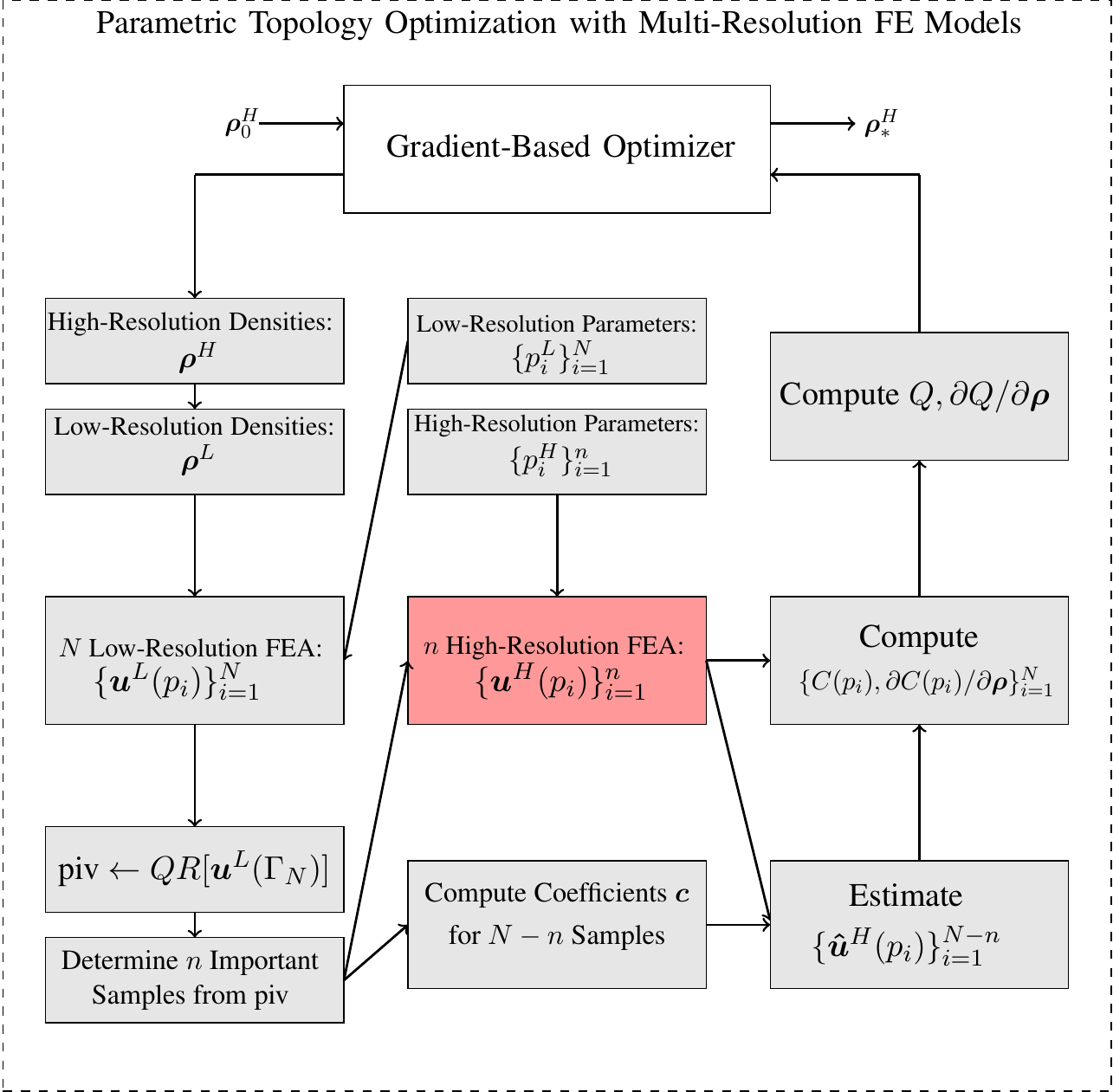}
\caption{{Flowchart of the parametric topology optimization with multi-resolution FE models. Note that the low- and high-resolution parameters are obtained once before the inception of optimization.}}\label{figflow}
\end{figure}

\subsection{Error Estimate for Bi-Fidelity Compliance and Its Sensitivity}\label{S3_5}

In this section, we show that our bi-fidelity estimate for each compliance sample and its derivative is close enough to the high fidelity estimate. In other words, the differences $|C(p_i)-\hat{C}(p_i)|,~ ||\partial C(p_i)/\partial \bm \rho-\partial \hat{C}(p_i)/\partial \bm \rho||_2,~ \forall p_i \in \Gamma$ is bounded for an arbitrary iteration throughout the optimization. In most of our analysis we do not show the dependence of these quantities on parameters and optimization iteration.

To derive an estimate for compliance we first need to analyze the difference in the displacement approximations i.e. $||\bm u^H - \hat{\bm u}^H||_2$. This difference is bounded via the triangle inequality
\begin{equation}\label{eq_estimate_1}
||\bm u^H - \hat{\bm u}^H|| \leq ||\bm u^H - \mathrm{P}_{\bm U_{\Gamma_n}^H} \bm u^H  || + ||\mathrm{P}_{\bm U_{\Gamma_n}^H} \bm u^H - \hat{\bm u}^H ||,
\end{equation}
where $P_{\bm U_{\Gamma_n}}^H$ is the $\bm U^H$-orthogonal projector onto $\bm U_{\Gamma_n}^H$. The first term on the right hand side is rather abstract and is typically bounded via a Kolmogrov $n$-width argument for the samples obtained from the greedy procedure~\eqref{greedy} i.e.
\begin{equation}
\mathop{\sup}_{p_i \in \Gamma}  ||\bm u^H - \mathrm{P}_{\bm U_{\Gamma_n}^H} \bm u^H(p_i)  || \leq \mathrm{C_1} \sqrt{d_{n/2}(u(\Gamma))}
\end{equation}
where $d_n(\bm u^H(\Gamma))$ is the $N$-width of the manifold $\bm u^H(\Gamma)$~\cite{Narayan14} . Detailed analysis of this term is beyond the scope of this work; however, we assume that the contribution of this error is negligible with respect to the second term on the right hand side. This assumption is frequently true in practice and therefore we assume it as a fraction of the second error term. This assumption will be verified in the numerical examples.

Before stating a result on the bound for $||\bm u^H - \hat{\bm u}^H||$ we first introduce some notations and assumptions. Similarly to Equation~\eqref{gramm_f}, let
\begin{equation}
\begin{array}{l l }
 G^H_{ij} = \langle \bm u^H(p_i),\bm u^H(p_j) \rangle^H, \quad \forall p_i,p_j \in \{p_1,\ldots,p_n \}\\
\\
 f^H_i = \langle \bm u^H(p_i),\bm u^H(p_j) \rangle^H, \quad p_j \in \{p_{n+1},\ldots,p_N  \}
\\
\mathrm{P}_{\bm U_{\Gamma_n}^H} \bm u^H(p_i) = \displaystyle \sum_{j=1}^{n} c^H_j \bm u^H(p_j)
\end{array}
\end{equation}
be the high-resolution Gramian, forcing term and interpolator. The high-fidelity coefficients $\bm c^H$ can then be obtained from $\bm G^H \bm c^H= \bm f^H$. Also let $\sqrt{\bm G^H}=\bm V \sqrt{\bm S} \bm V^T$ be the square root of the positive (semi)definite Gramian  $\bm G^H=\bm V \sqrt{\bm S} \bm V^T$ where $\bm S, \bm V$ are eigenvalues and eigenvectors of $\bm G^H$.



\begin{assumption}
The low- and high-fidelity coefficients are close i.e.
\begin{equation}\label{assumption1}
 \|\bm c^H - \bm c^L \|  \leq \epsilon
\end{equation}
and the ratio
\begin{equation}\label{assumption2}
\delta =\frac{||\bm u^H - \mathrm{P}_{\bm U_{\Gamma_n}^H} \bm u^H  ||}{||\mathrm{P}_{\bm U_{\Gamma_n}^H} \bm u^H - \hat{\bm u}^H ||}
\end{equation}
is small i.e. $\delta \ll 1$.
\end{assumption}

The following lemma bounds the error in the displacement approximation~\eqref{eq_estimate_1}:

\begin{lemma}\label{lemma1}
Let $n$ important samples be given via~\eqref{greedy} and the assumptions~\eqref{assumption1} and~\eqref{assumption2} hold, then
\begin{equation}\label{lemma1_1}
|| \bm u^H - \hat{\bm u}^H|| \leq {\epsilon}(1+\delta) \sqrt{\sigma_{max}(\bm G^H)}
\end{equation}
where $\sigma_{max}(\bm G^H)$ is the largest singular value of $\bm G^H$.

%
\end{lemma}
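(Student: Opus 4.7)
The plan is to bound the two terms on the right-hand side of the triangle inequality \eqref{eq_estimate_1} separately, exploit Assumption~\eqref{assumption2} to absorb the first (projection error) term into a multiple of the second (coefficient error) term, and then estimate the coefficient error term via a Gramian-based identity.

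First I will rewrite the second term using the explicit representations
\[
\mathrm{P}_{\bm U_{\Gamma_n}^H} \bm u^H = \sum_{j=1}^n c^H_j \bm u^H(p_j), \qquad \hat{\bm u}^H = \sum_{j=1}^n c^L_j \bm u^H(p_j),
\]
so that their difference is a linear combination of the snapshot basis with coefficient vector $\bm c^H - \bm c^L$. Setting $\bm U^H = [\bm u^H(p_1)\,\cdots\,\bm u^H(p_n)]$ and letting $\bm W$ denote the (diagonal) weight matrix of the inner product $\langle\cdot,\cdot\rangle^H$, the key algebraic identity is
\[
\|\bm U^H \bm x\|^2 = \bm x^T (\bm U^H)^T \bm W\, \bm U^H \bm x = \bm x^T \bm G^H \bm x,
\]
i.e., the squared norm of any linear combination of snapshots is the quadratic form defined by the Gramian. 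Applying this with $\bm x = \bm c^H - \bm c^L$ and using the Rayleigh-quotient bound $\bm x^T \bm G^H \bm x \le \sigma_{\max}(\bm G^H)\,\|\bm x\|^2$ together with Assumption~\eqref{assumption1} yields
\[
\bigl\|\mathrm{P}_{\bm U_{\Gamma_n}^H} \bm u^H - \hat{\bm u}^H\bigr\|
  \le \sqrt{\sigma_{\max}(\bm G^H)}\,\|\bm c^H - \bm c^L\|
  \le \epsilon\sqrt{\sigma_{\max}(\bm G^H)}.
\]

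Next I will invoke Assumption~\eqref{assumption2}, which says the first term in \eqref{eq_estimate_1} is at most $\delta$ times the second term. Combining with the bound just obtained gives
\[
\bigl\|\bm u^H - \mathrm{P}_{\bm U_{\Gamma_n}^H} \bm u^H\bigr\|
 \le \delta\,\bigl\|\mathrm{P}_{\bm U_{\Gamma_n}^H} \bm u^H - \hat{\bm u}^H\bigr\|
 \le \delta\,\epsilon\sqrt{\sigma_{\max}(\bm G^H)}.
\]
Summing the two bounds through the triangle inequality \eqref{eq_estimate_1} gives the claimed estimate $\|\bm u^H - \hat{\bm u}^H\| \le \epsilon(1+\delta)\sqrt{\sigma_{\max}(\bm G^H)}$.

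The only step that requires genuine care is the passage from $\|\sum_j (c^H_j - c^L_j)\bm u^H(p_j)\|$ to a bound in terms of $\|\bm c^H-\bm c^L\|$ and the Gramian spectrum; everything else is a direct application of the triangle inequality and the two stated assumptions. There is nothing subtle about the Kolmogorov $n$-width part of the argument here, because it has been pushed entirely into Assumption~\eqref{assumption2}; the lemma itself is essentially a linear-algebra identity dressed with that assumption.
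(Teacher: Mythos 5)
Your proof is correct and follows essentially the same route as the paper: both decompose the error via the triangle inequality \eqref{eq_estimate_1}, express $\mathrm{P}_{\bm U_{\Gamma_n}^H} \bm u^H - \hat{\bm u}^H$ as the snapshot combination with coefficient vector $\bm c^H - \bm c^L$, bound its norm through the Gramian quadratic form $(\bm c^H-\bm c^L)^T \bm G^H (\bm c^H-\bm c^L) \leq \sigma_{\max}(\bm G^H)\,\epsilon^2$, and absorb the projection error using the ratio $\delta$ from \eqref{assumption2}. Your use of the Rayleigh-quotient bound is equivalent to the paper's step through $\|\sqrt{\bm G^H}(\bm c^H - \bm c^L)\|$, so there is no substantive difference.
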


\begin{proof}
The second error in the right hand side of~\eqref{eq_estimate_1} is expressed as
\begin{equation}\label{proof_lemma}
\begin{array}{l l}
\|\mathrm{P}_{\bm U_{\Gamma_n}^H} \bm u^H - \hat{\bm u}^H\|^2 &=\left \| \displaystyle \sum_{j=1}^n (c_j^H-c_j^L) \bm u^H(p_j) \right \|^2\\
\\
&=(\bm c^H-\bm c^L) \bm G^H (\bm c^H-\bm c^L) = \| \sqrt{\bm G^H}(\bm c^H-\bm c^L)||^2\\
\\
& \leq \| \sqrt{\bm G^H}\|^2 \|\bm c^H-\bm c^L\|^2 \leq \sigma_{max}(\bm G^H) \epsilon^2
\end{array}
\end{equation}
which yields $\|\mathrm{P}_{\bm U_{\Gamma_n}^H} \bm u^H - \hat{\bm u}^H\| \leq \epsilon \sqrt{\sigma_{max}(\bm G^H)}$. Using this inequality and the definition~\eqref{assumption2} in inequality~\eqref{eq_estimate_1} yields the estimate~\eqref{lemma1_1}.
\end{proof}

\begin{remark}\label{rem1}
As mentioned the ratio $\delta$ is not known \textit{a priori} via analytical estimates. In practice we directly compute it in our numerical experiments by using first few unimportant high-resolution samples in the pivot vector and selecting the worst ratio i.e. the largest one. We also compute $\bm f^H$ and subsequently $\bm c^H$ associated with that particular unimportant sample and find an approximate value for $\epsilon$.
\end{remark}

The following proposition bounds the error for the compliance and its sensitivity:

\begin{proposition}\label{prop:C-error}
Let the bi-fidelity approximation error in displacement be given by~\eqref{lemma1_1} then the error in compliance approximation and its sensitivity is bounded by

\begin{equation}\label{prop1}
\begin{array}{c l}
|C-\hat{C}| & \leq \mathrm{A}~ \sigma_{\max}(\bm K) \\
\\

\left \|\displaystyle \frac{\partial C}{\partial \bm \rho}-\frac{\partial \hat{C}}{\partial \bm \rho}\right \| & \leq \mathrm{A}~ \sigma_{\max}\left(\displaystyle \frac{\partial \bm K}{\partial \bm \rho}\right)
\end{array}
\end{equation}
where
\begin{equation}\label{prop2}
  \mathrm{A} = {\epsilon}(1+\delta) \sigma_{\max} (\bm G^H) \left[ 2\|\bm c^l \|  + \epsilon (1 + \delta)\right],
\end{equation}
$\sigma_{\max}$ denotes the largest singular value and $\bm K,\partial \bm K/\partial \bm \rho$ denote the global stiffness matrix cf. Equation~\eqref{to5} and its derivative.
\end{proposition}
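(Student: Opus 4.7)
\medskip

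\noindent\textbf{Proof proposal.} The plan is to write both $C$ and $\hat{C}$ as quadratic forms in the displacement, so that the error reduces to a displacement error controlled by Lemma~\ref{lemma1}. Since $\bm K$ is symmetric and $\bm F = \bm K \bm U$, one has $C = \bm U^T \bm K \bm U$; analogously, define $\hat{C} := \hat{\bm U}^T \bm K \hat{\bm U}$ (replacing $\bm U$ by its bi-fidelity surrogate $\hat{\bm U} := \hat{\bm u}^H$). Writing $\bm e := \bm U - \hat{\bm U}$ and using symmetry of $\bm K$, we get the polarization-type identity
\begin{equation*}
C - \hat{C} = (\bm U - \hat{\bm U})^T \bm K (\bm U + \hat{\bm U}) = \bm e^T \bm K (\bm U + \hat{\bm U}).
\end{equation*}
Cauchy--Schwarz and submultiplicativity then yield
\begin{equation*}
|C - \hat{C}| \le \|\bm e\|\, \sigma_{\max}(\bm K)\, \bigl(\|\bm U\| + \|\hat{\bm U}\|\bigr).
\end{equation*}

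Next, I would estimate $\|\hat{\bm U}\|$ and $\|\bm U\|$ in terms of quantities already appearing in Lemma~\ref{lemma1}. Since $\hat{\bm U} = \sum_{j=1}^n c_j^L\, \bm u^H(p_j)$, expanding the norm gives $\|\hat{\bm U}\|^2 = (\bm c^L)^T \bm G^H \bm c^L \le \sigma_{\max}(\bm G^H)\,\|\bm c^L\|^2$, so $\|\hat{\bm U}\| \le \sqrt{\sigma_{\max}(\bm G^H)}\,\|\bm c^L\|$. A triangle inequality then gives $\|\bm U\| \le \|\hat{\bm U}\| + \|\bm e\|$, so that $\|\bm U\| + \|\hat{\bm U}\| \le 2\sqrt{\sigma_{\max}(\bm G^H)}\,\|\bm c^L\| + \|\bm e\|$. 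Substituting the displacement bound $\|\bm e\| \le \epsilon(1+\delta)\sqrt{\sigma_{\max}(\bm G^H)}$ from Lemma~\ref{lemma1} into both factors produces
\begin{equation*}
|C - \hat{C}| \le \epsilon(1+\delta)\sqrt{\sigma_{\max}(\bm G^H)}\cdot \sigma_{\max}(\bm K)\cdot \sqrt{\sigma_{\max}(\bm G^H)}\bigl[2\|\bm c^L\| + \epsilon(1+\delta)\bigr],
\end{equation*}
and collecting the $\sqrt{\sigma_{\max}(\bm G^H)}$ factors identifies the prefactor $\mathrm{A}$ in \eqref{prop2}, giving the first bound in \eqref{prop1}.

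For the sensitivity, I would repeat the same argument with $\bm K$ replaced by $\partial \bm K/\partial \bm \rho$. Using the adjoint identity $\partial C/\partial \bm \rho = -\bm U^T (\partial \bm K/\partial \bm \rho)\bm U$ from Section~\ref{S2_3}, and correspondingly $\partial \hat{C}/\partial \bm \rho = -\hat{\bm U}^T (\partial \bm K/\partial \bm \rho)\hat{\bm U}$, the symmetry of each $\partial \bm K/\partial \rho_k$ yields the same polarization identity componentwise. Applying the bound above per component and gathering into the Euclidean norm on $\bm \rho$-space gives the second inequality in \eqref{prop1}, with $\sigma_{\max}(\partial \bm K/\partial \bm \rho)$ interpreted as the induced operator norm of the stiffness-sensitivity tensor.

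The main obstacle I expect is bookkeeping rather than mathematics: the decomposition $C - \hat C = \bm e^T \bm K(\bm U + \hat{\bm U})$ is a clean one-liner only because $\bm K$ is symmetric, and the appearance of $(\|\bm U\| + \|\hat{\bm U}\|)$ forces one to bound the \emph{unknown} high-fidelity norm $\|\bm U\|$ via the surrogate plus the displacement error, which is precisely what produces the quadratic-in-$\epsilon$ term in $\mathrm{A}$. The secondary subtlety is to ensure the sensitivity estimate respects whatever norm on the tensor $\partial \bm K/\partial \bm \rho$ the statement intends; this should be addressed explicitly (e.g.\ by noting $\sigma_{\max}(\partial \bm K/\partial \bm \rho) := \sup_{\|\bm v\|=1}\|(\partial \bm K/\partial \bm \rho)\bm v\|$ acting as a linear map from displacement space to $\mathbb{R}^n \otimes$ displacement space), so that the componentwise estimates aggregate into the stated Euclidean bound.
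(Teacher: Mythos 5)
Your proposal is correct and follows essentially the same route as the paper's proof: split $C-\hat{C}$ into a product of the displacement error and $\bm K$ acting on $\bm U$, $\hat{\bm U}$ (your symmetric polarization identity and the paper's add-and-subtract splitting $(\bm u-\hat{\bm u})^T\bm K\bm u+\hat{\bm u}^T\bm K(\bm u-\hat{\bm u})$ yield the identical intermediate bound $\|\bm u-\hat{\bm u}\|\,\sigma_{\max}(\bm K)\,(\|\bm u\|+\|\hat{\bm u}\|)$), then control $\|\hat{\bm u}\|$ by $\sqrt{\sigma_{\max}(\bm G^H)}\,\|\bm c^L\|$ via the Gramian, eliminate $\|\bm u\|$ by the triangle inequality, insert Lemma~\ref{lemma1}, and swap $\sigma_{\max}(\bm K)$ for $\sigma_{\max}(\partial\bm K/\partial\bm\rho)$ in the sensitivity case. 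Your closing remark on interpreting $\sigma_{\max}(\partial\bm K/\partial\bm\rho)$ as an induced tensor norm is actually more explicit than the paper, which treats that point loosely (and per-component in its numerics).
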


\begin{proof}
We use the result of Lemma~\ref{lemma1} and the definition of compliance and its derivative i.e. $C = \bm u^T \bm K \bm u$,~$\partial C/\partial \bm \rho=-\bm u^T (\partial \bm K/\partial \bm \rho) \bm u$:
\begin{equation}\label{prop_proof1}
\begin{array}{c l}
|C-\hat{C}| &= \| \bm u^T \bm K \bm u - \hat{\bm u}^T \bm K \hat{\bm u} \| = \| (\bm u^T-\hat{\bm u}^T) \bm K \bm u + \hat{\bm u}^T \bm K (\bm u -\hat{\bm u})  \|\\
\\
& \leq \| \bm u - \hat{\bm u}\| \sigma_{\max}(\bm K) \Big[\|\bm u   \| + \|\hat{\bm u}  \|\Big]\\
\\
& \leq \| \bm u - \hat{\bm u}\| \sigma_{\max}(\bm K) \Big[\| \bm u - \hat{\bm u}\| +2\|\hat{\bm u} \|    \Big]\\
\end{array}
\end{equation}
According to~\eqref{lemma1_1} and~\eqref{proof_lemma}
\begin{equation}\label{prop_proof2}
\begin{array}{c l}
\|\bm u-\hat{\bm u}\| \leq {\epsilon}(1+\delta) \sqrt{\sigma_{max}(\bm G^H)} \\
\\
\| \hat{\bm u}\| = \left \| \sqrt{\bm G^H} \bm c^L  \right \| \leq \sqrt{\sigma_{max}(\bm G^H)} \| \bm c^L \|.
\end{array}
\end{equation}
Using the above estimates in~\eqref{prop_proof1} yields
\begin{equation}\label{prop_proof3}
\begin{array}{l l}
|C-\hat{C}| &\leq {\epsilon}(1+\delta)  \sigma_{\max}(\bm K) \sigma_{\max} (\bm G^H) \Big[{\epsilon}(1+\delta) +2 \|\bm c^l \| \Big] \\
 &  = A \sigma_{\max}(\bm K)
\end{array}
\end{equation}
Similarly the bound for compliance sensitivity is obtained by replacing $\sigma_{\max}(\bm K)$ with $\sigma_{\max}(\partial \bm K/\partial \bm \rho)$ in the above estimate.
\end{proof}

\section{Numerical Illustration}\label{S4}

\subsection{Loading Variability}

In this example we consider variations in loading on a square carrier plate shown in Figure~\ref{figcarrier}. The domain is discretized using standard square finite elements with different number of elements from coarse to fines meshes i.e. $4 \times 4$, $10 \times 10$ , $20 \times 20 $ , $50 \times 50$ and $100 \times 100$ elements. We fix top two layers of elements as solid elements to ensure the connectivity between load and structure. The load consists of a deterministic distributed vertical load $f_2=2$ and random horizontal load $f_1(x,p)$ which is modeled as a random field cf. Equation~\eqref{pcen216} with zero mean and square exponential covariance function similarly to Equation~\eqref{pcen212} with $l_c=0.2$. In Equation~\eqref{pcen216} we assume $\phi(\bm p)$ as uniform random variables $U[-1,1]$ and consider $n_M=10$ modes which yields the ratio $\sum_{i=1}^{10} \sqrt{\lambda_i}/\sum_{i=1}^{100} \sqrt{\lambda_i}=0.9$, reasonably close to $1$. First five modes of loading is shown in Figure~\ref{figKLE_load}.

\begin{figure}[!h]
\centering
\includegraphics[width=2.5in]{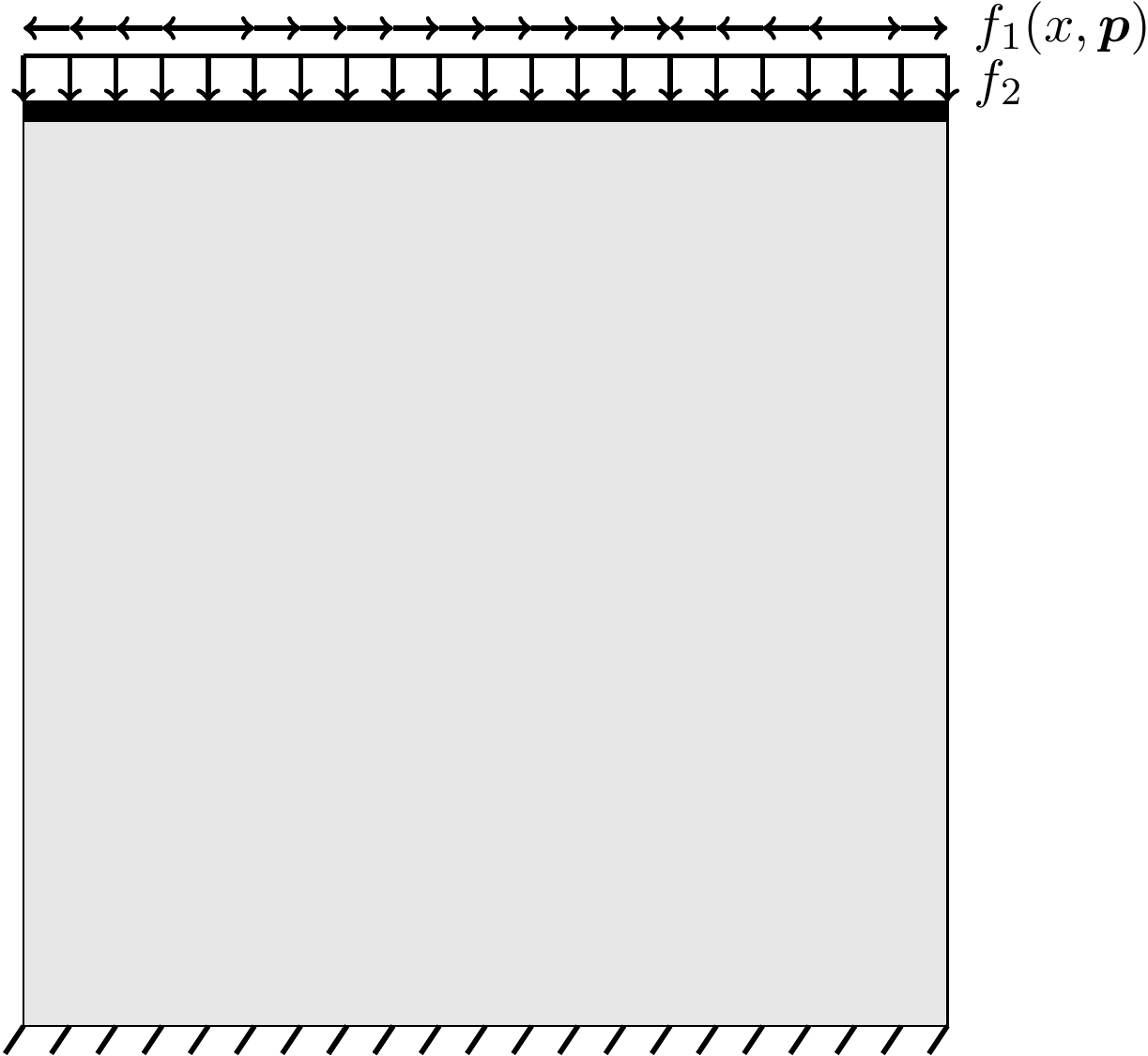}
\hspace{-1.2cm}
\caption{{Carrier Plate}}\label{figcarrier}
\end{figure}

\begin{figure}[!h]
\centering
\includegraphics[width=5.5in]{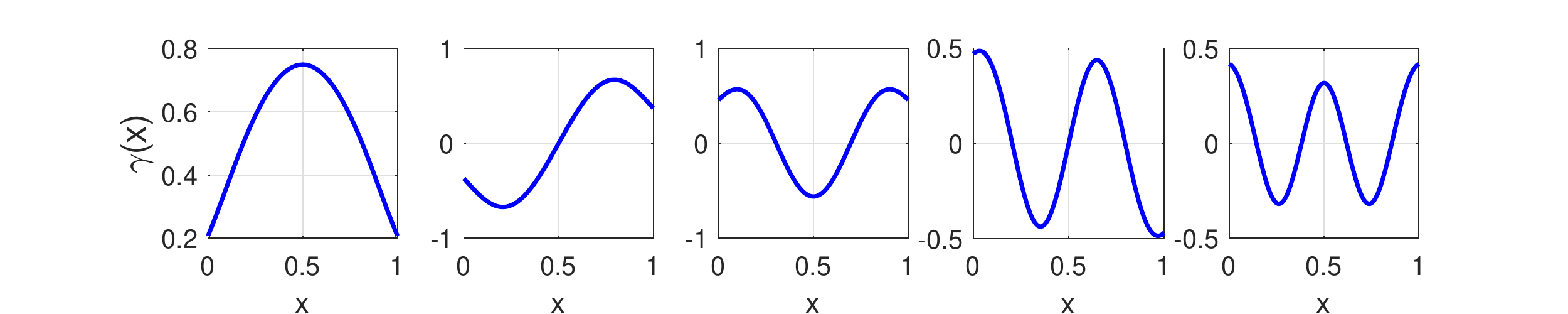}
\caption{{First five modes of Karhunen-Loeve Expansion for loading}}\label{figKLE_load}
\end{figure}

To apply these modal loads on the structure with lower resolution we simply interpolate the high resolution modes e.g. the ones with $100$ elements (shown in Figure~\ref{figKLE_load}). To compute the statistical moments we use a quadrature rule named \textit{designed quadrature} which was developed previously by the authors~\cite{Keshavarzzadeh2018}. This quadrature rule is specially designed for integration in multiple dimensions where the positivity of weights are ensured and in all cases tested the number of nodes is smaller than those in a corresponding sparse grid rule. We use $N=148$ points that integrates a function with $10$ variables associated with $n_M$ in~\eqref{pcen216} and the total order $\bm \alpha=5$ i.e. this set of points can integrate $\int x^{\alpha_1}_1 x^{\alpha_2}_2 \ldots x^{\alpha_{10}}_{10} dx_1 dx_2 \ldots dx_{10}$ for $\sum_{i=1}^{10} \alpha_i \leq 5$ accurately. We deem total order $\bm \alpha=5$ sufficient for our problem.

The next two figures show some intermediate results associated with an iteration in the middle of optimization. Figure~\ref{figsvd} shows the decay of singular values in the low-resolution models $\bm u^L(\Gamma_N)$ for $4 \times 4$ and $10 \times 10$ meshes. As seen the coarser mesh has only numerical rank $r=6$ while the finer mesh has $r=11$. This suggests that all $10$ horizontal modes and the vertical load can be captured by the finer mesh while the coarser one does not have enough degrees of freedom (only 6 DOFs) to capture all modal loads.

\begin{figure}[!h]
\centering
\includegraphics[width=5in]{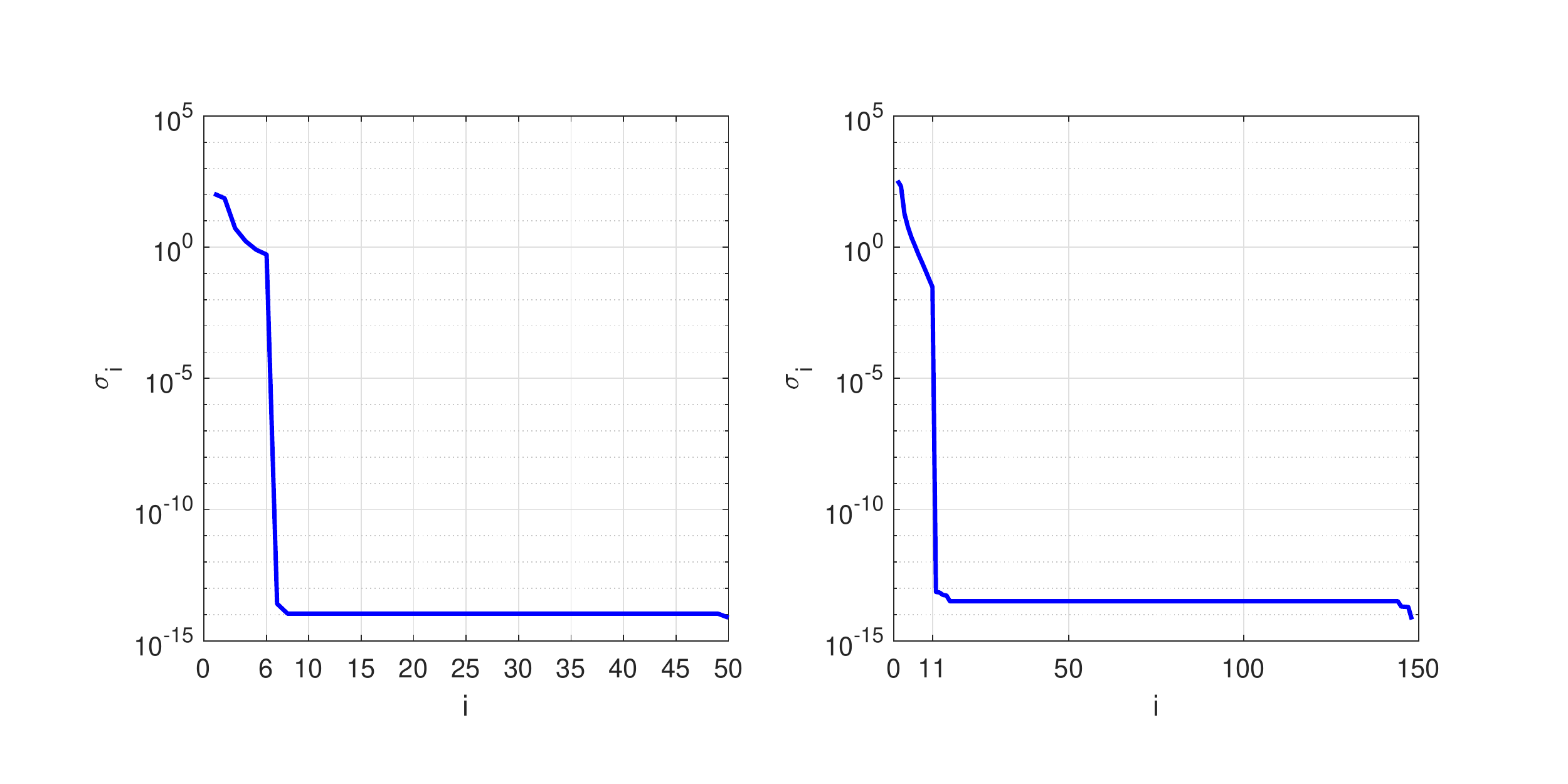}
\caption{{Decay of singular values in $\bm u^L(\Gamma_N)$ for $4 \times 4$ and $10 \times 10$ meshes}}\label{figsvd}
\end{figure}

Figure~\ref{fign_vary_F} shows the difference between bi-fidelity approximation and high-fidelity solutions for the displacement, compliance and compliance sensitivity with respect to different values on $n$, number of high-resolution simulations.

\begin{figure}[!h]
\centering
\includegraphics[width=5.5in]{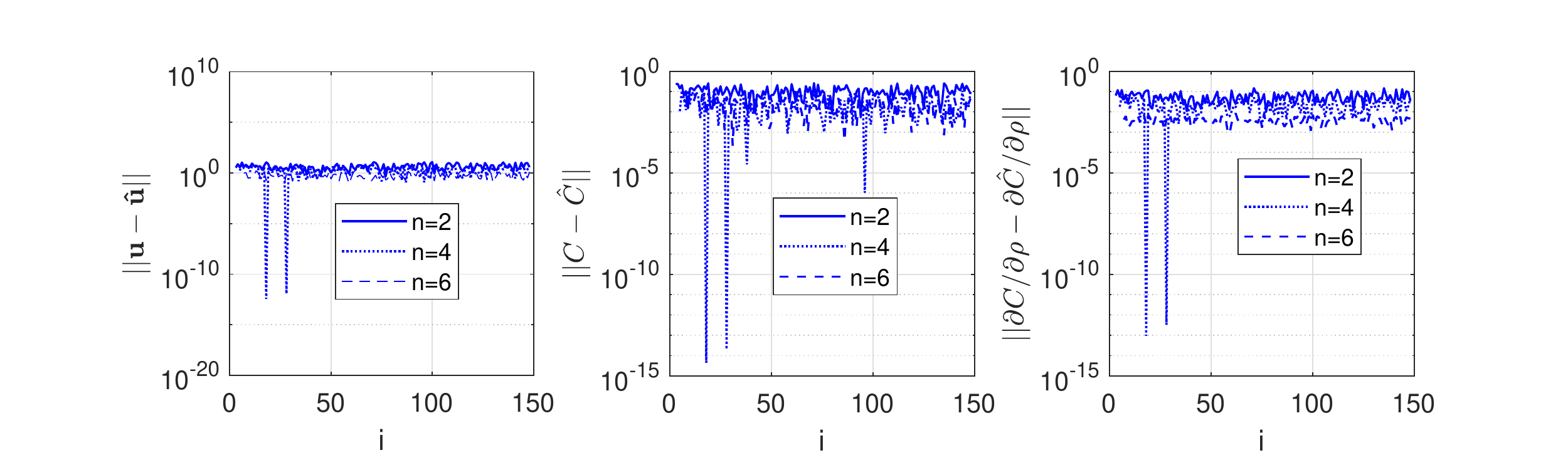}
\vspace{-0.2cm}
\includegraphics[width=5.5in]{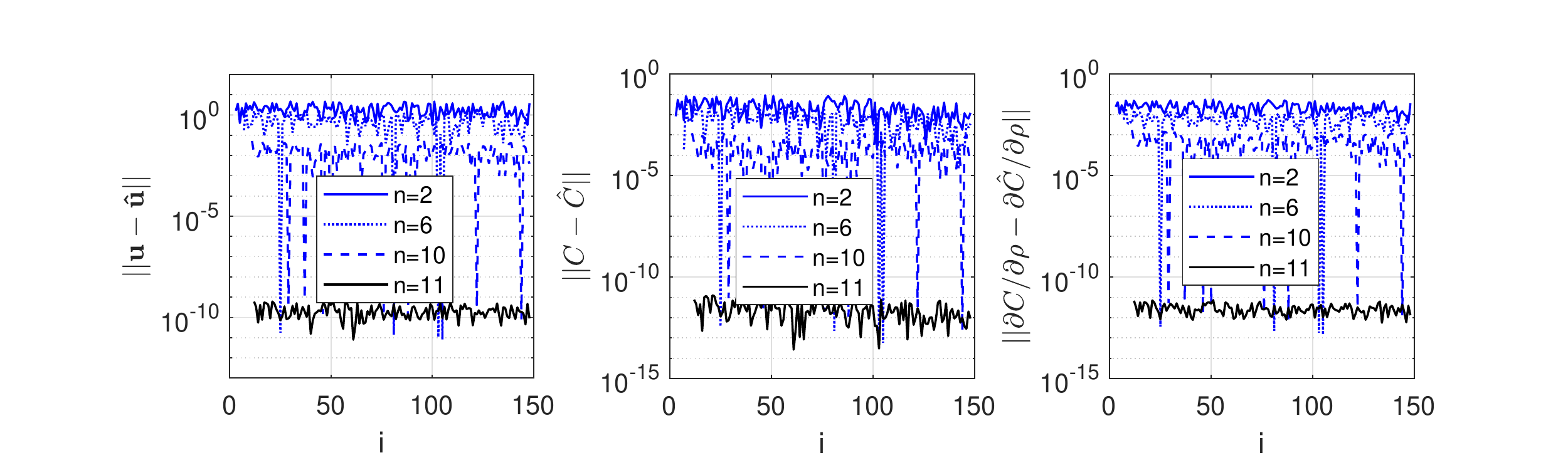}
\caption{{Bi-fidelity actual approximation error for displacement, compliance and compliance sensitivity with respect to different number of high-resolution simulations $n$ for $4 \times 4$ (top) and $10 \times 10$ (bottom) meshes.}}\label{fign_vary_F}
\end{figure}

Topology optimization results with different meshes are plotted in Figure~\ref{fign_vary_mesh}. We use filter radius $r_{min}=6,3,2,1.5,1.05$ for different meshes; in the case of two coarsest meshes $4 \times 4$ and $10 \times 10$ we only fix the top layer instead of top two layers. We use the same optimality criteria algorithm~\cite{Bendsoe03,Andreassen2011} to update the design parameters until the optimization is converged. From these plots it is obvious that the single resolution optimizations (top plots) with coarse meshes yield uninformative topology designs but using these coarse meshes in our bi-resolution framework results in designs that are almost identical to high-resolution optimization.

\begin{figure}[!h]
\begin{tabular}{l l l l l}
\vspace{-0.5cm}
\hspace{-0.4cm}\includegraphics[width=1.5in]{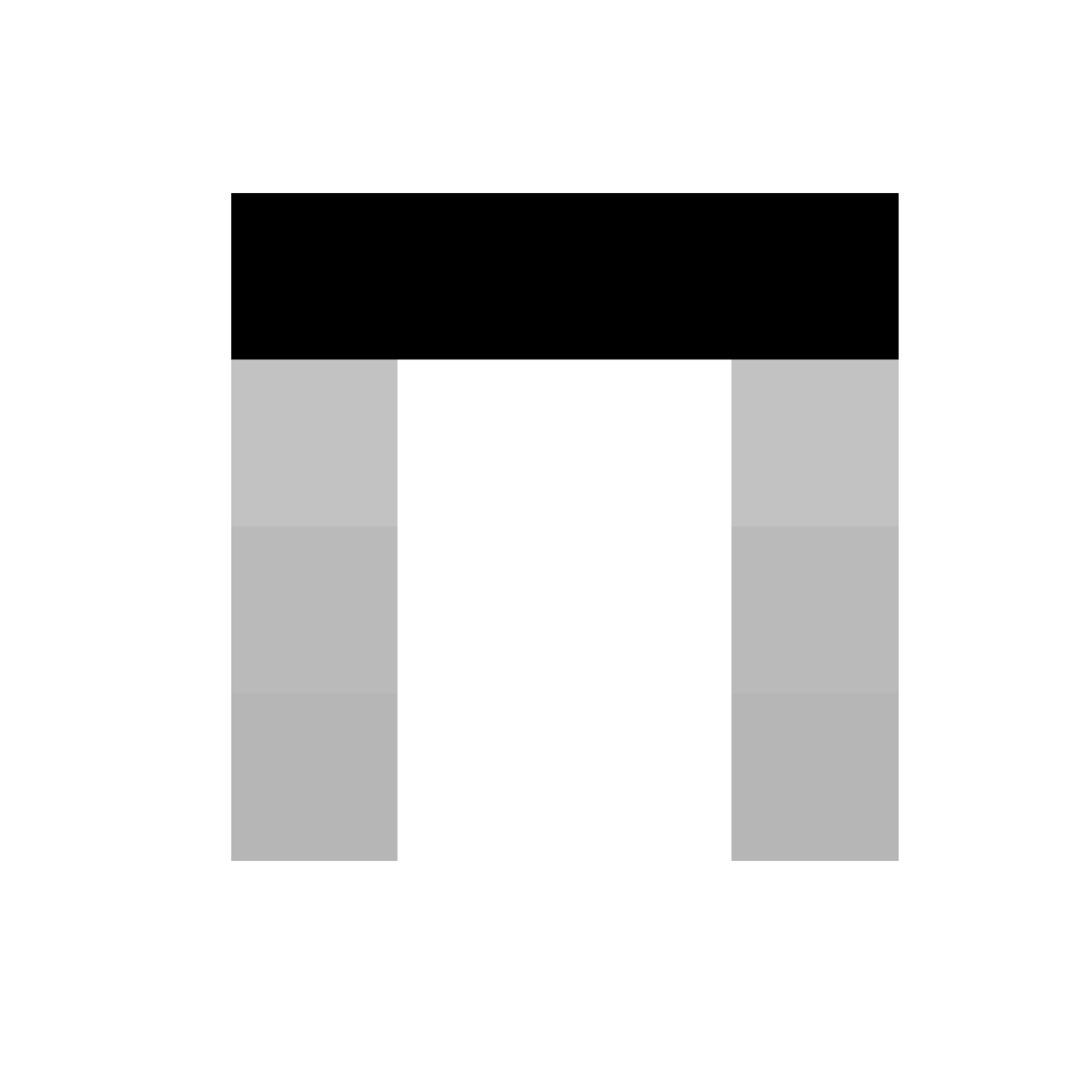}&\hspace{-0.8cm}\includegraphics[width=1.5in]{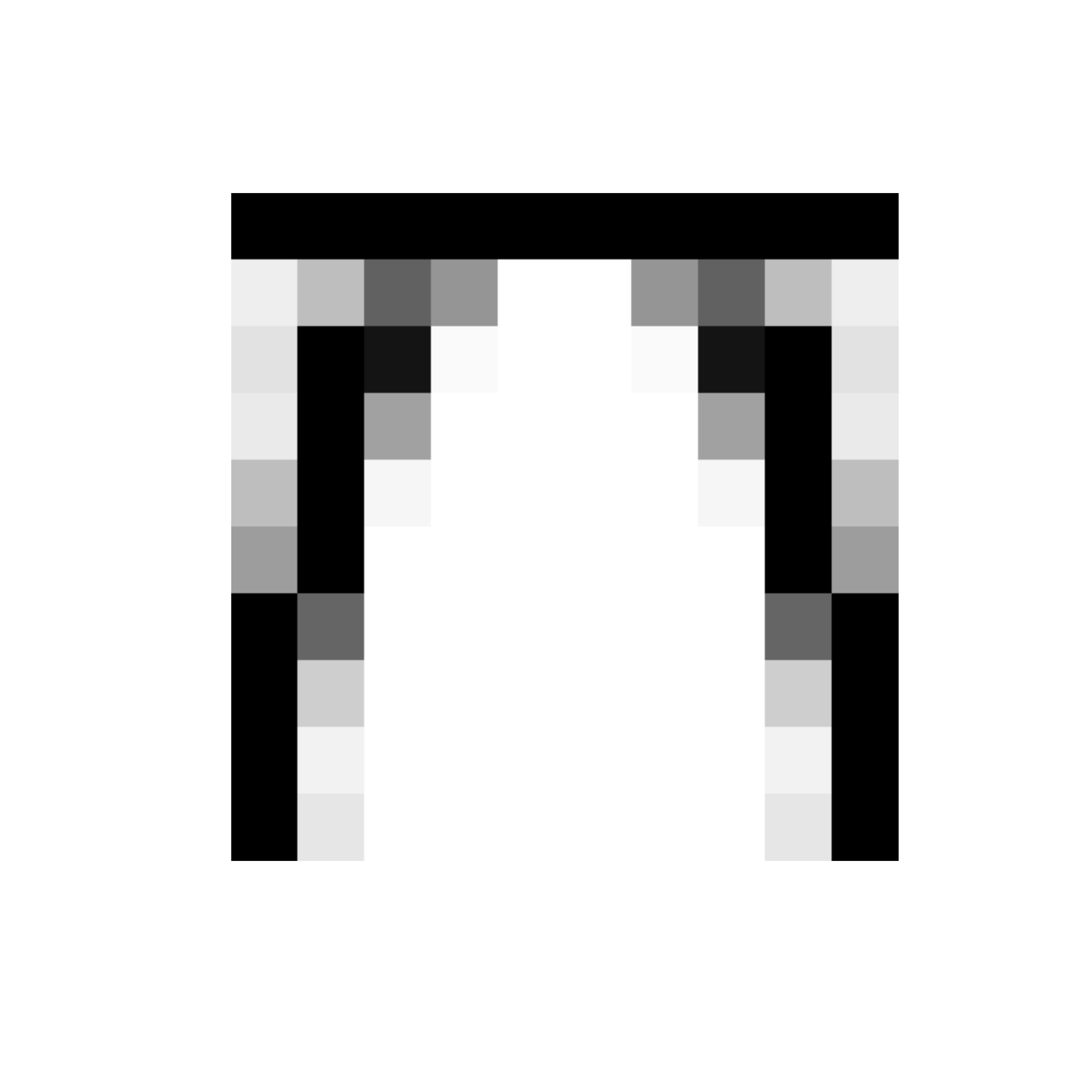}&\hspace{-1.0cm}\includegraphics[width=1.5in]{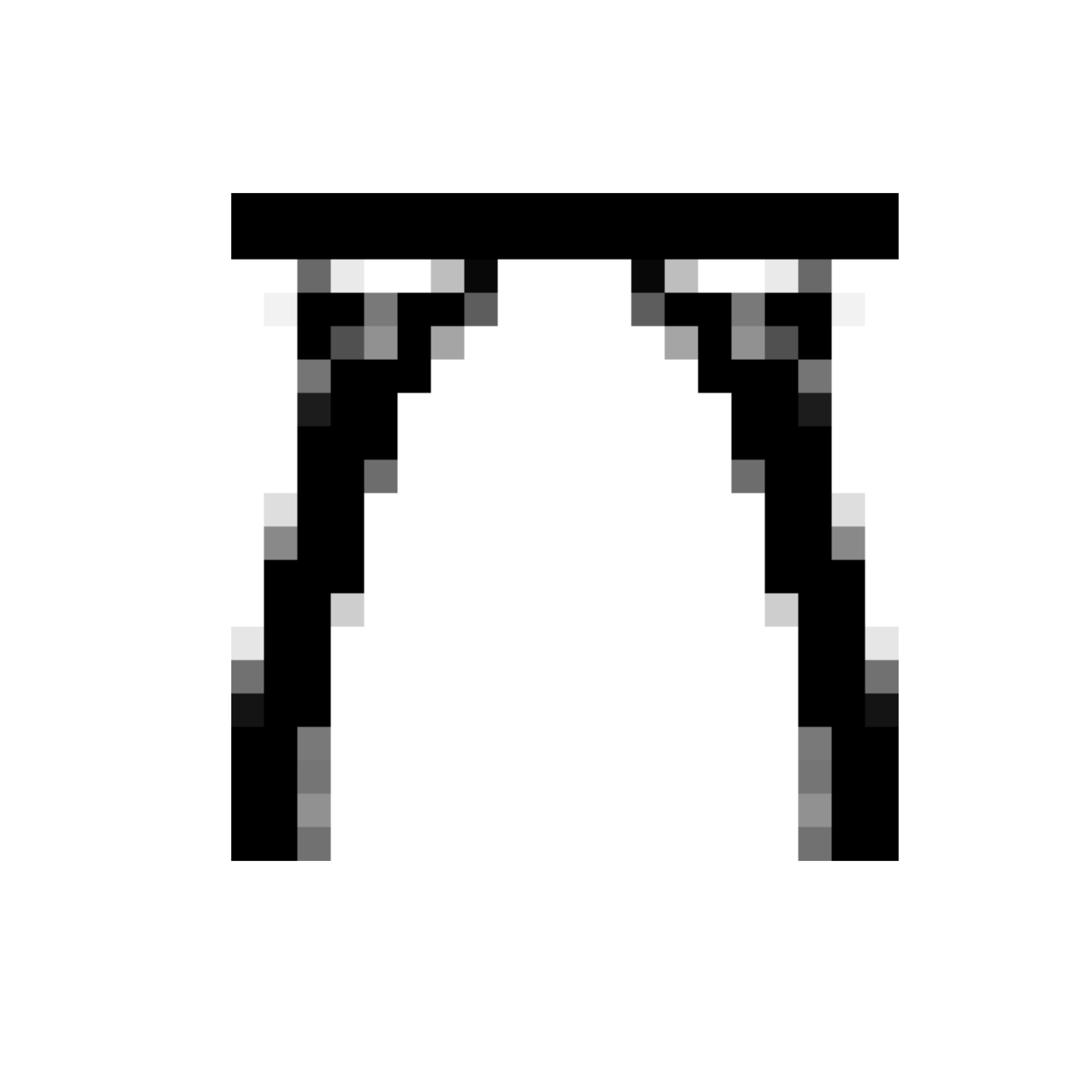}&\hspace{-1.0cm}\includegraphics[width=1.5in]{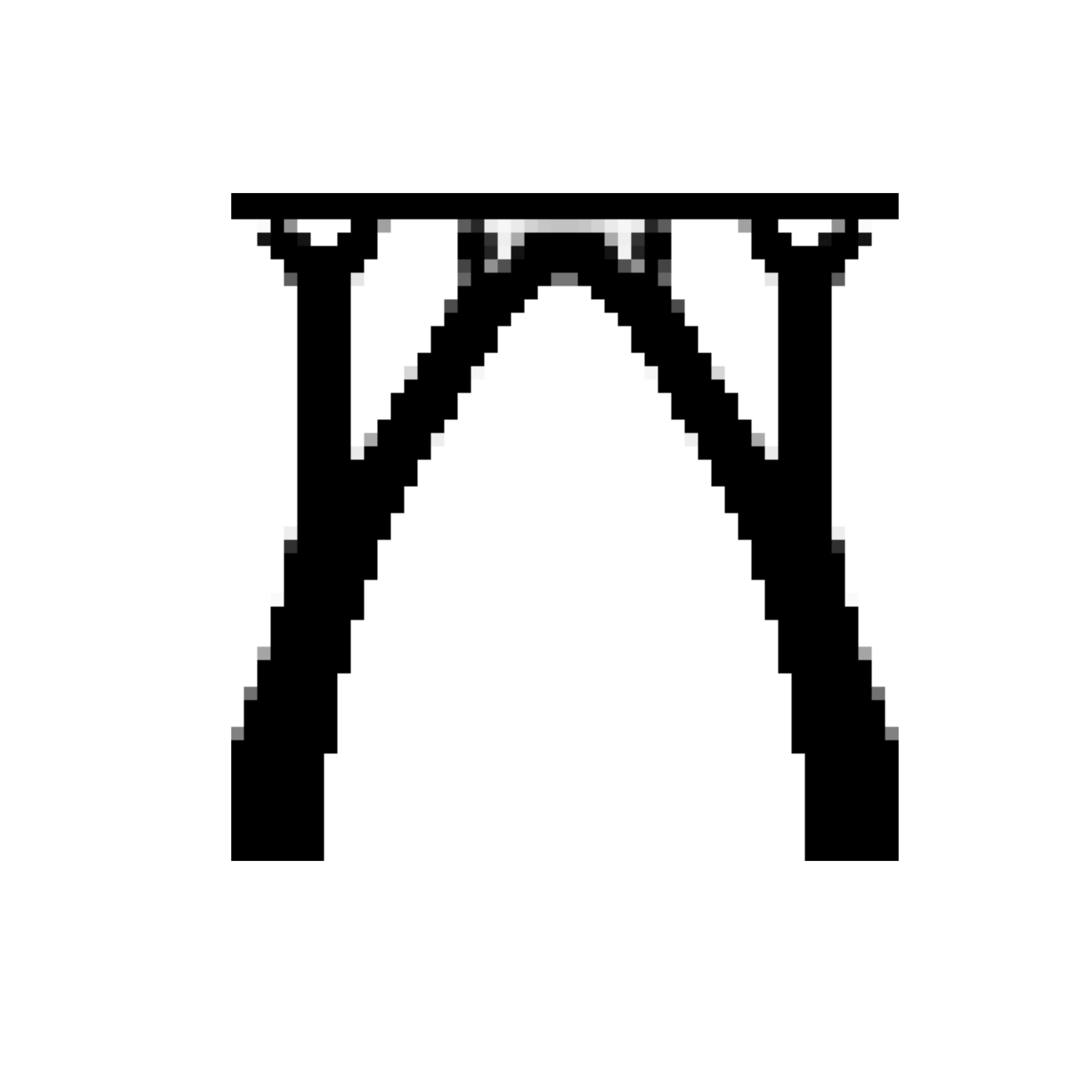}
&\hspace{-1.1cm}\includegraphics[width=1.5in]{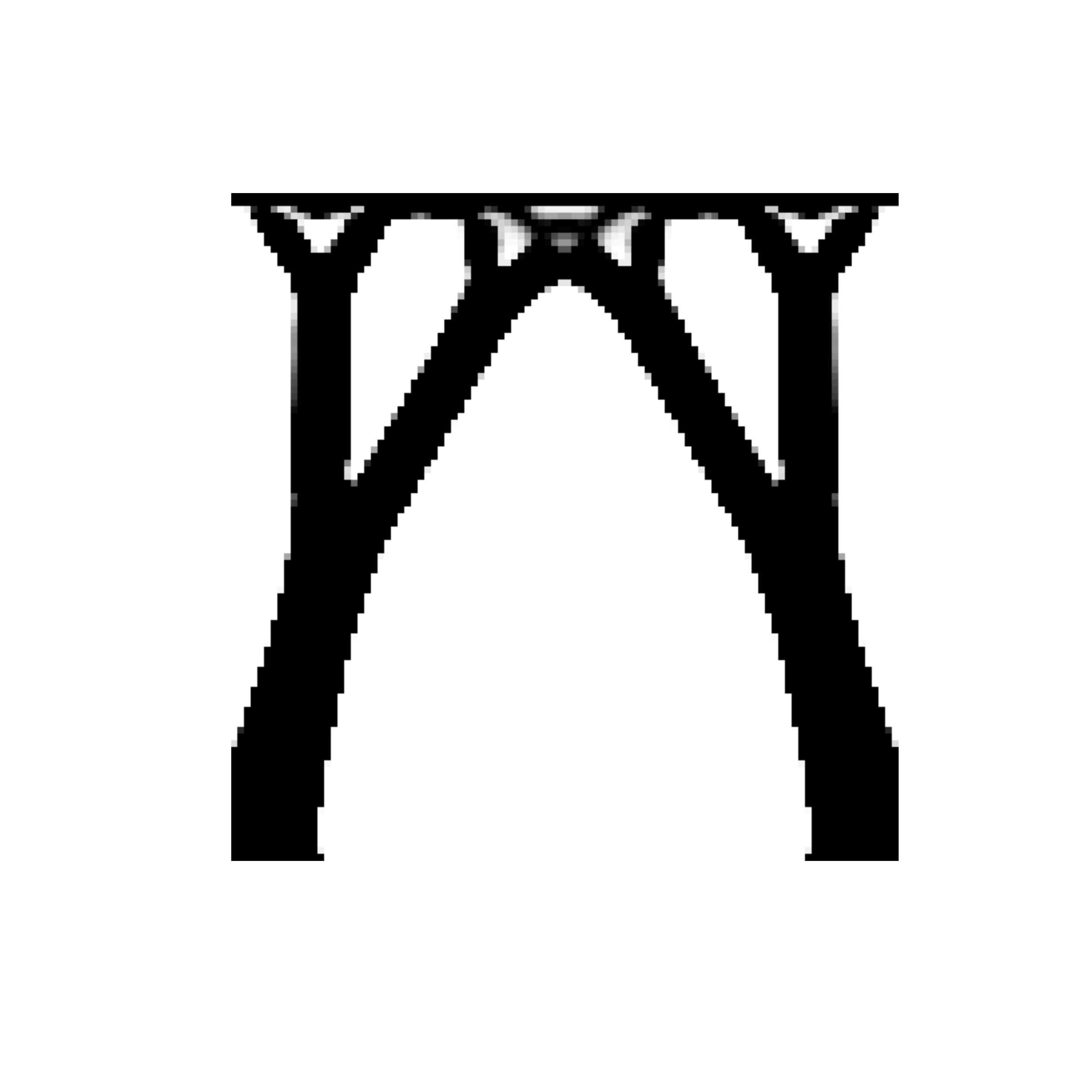}\\
\hspace{-0.4cm}\includegraphics[width=1.5in]{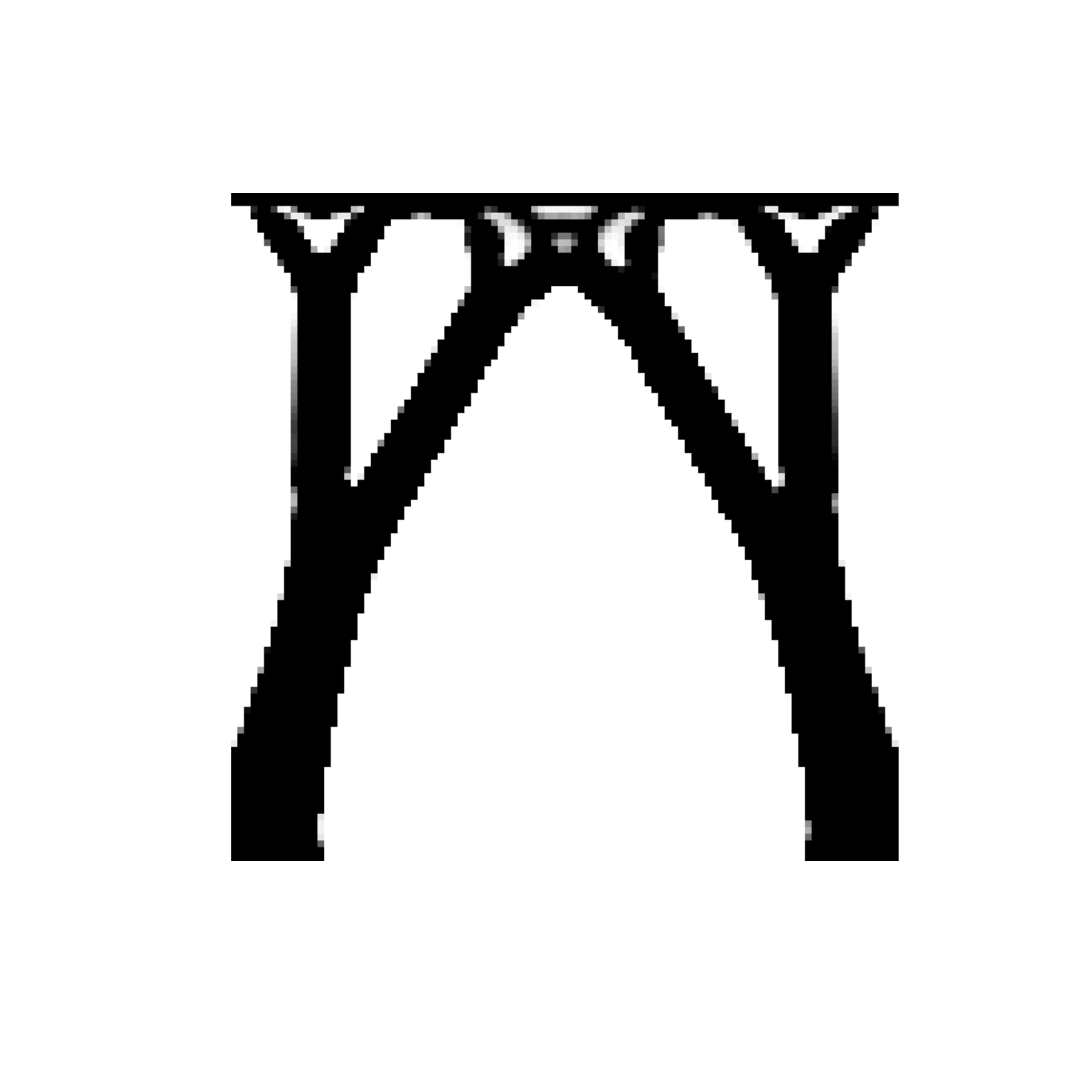}&\hspace{-0.8cm}\includegraphics[width=1.5in]{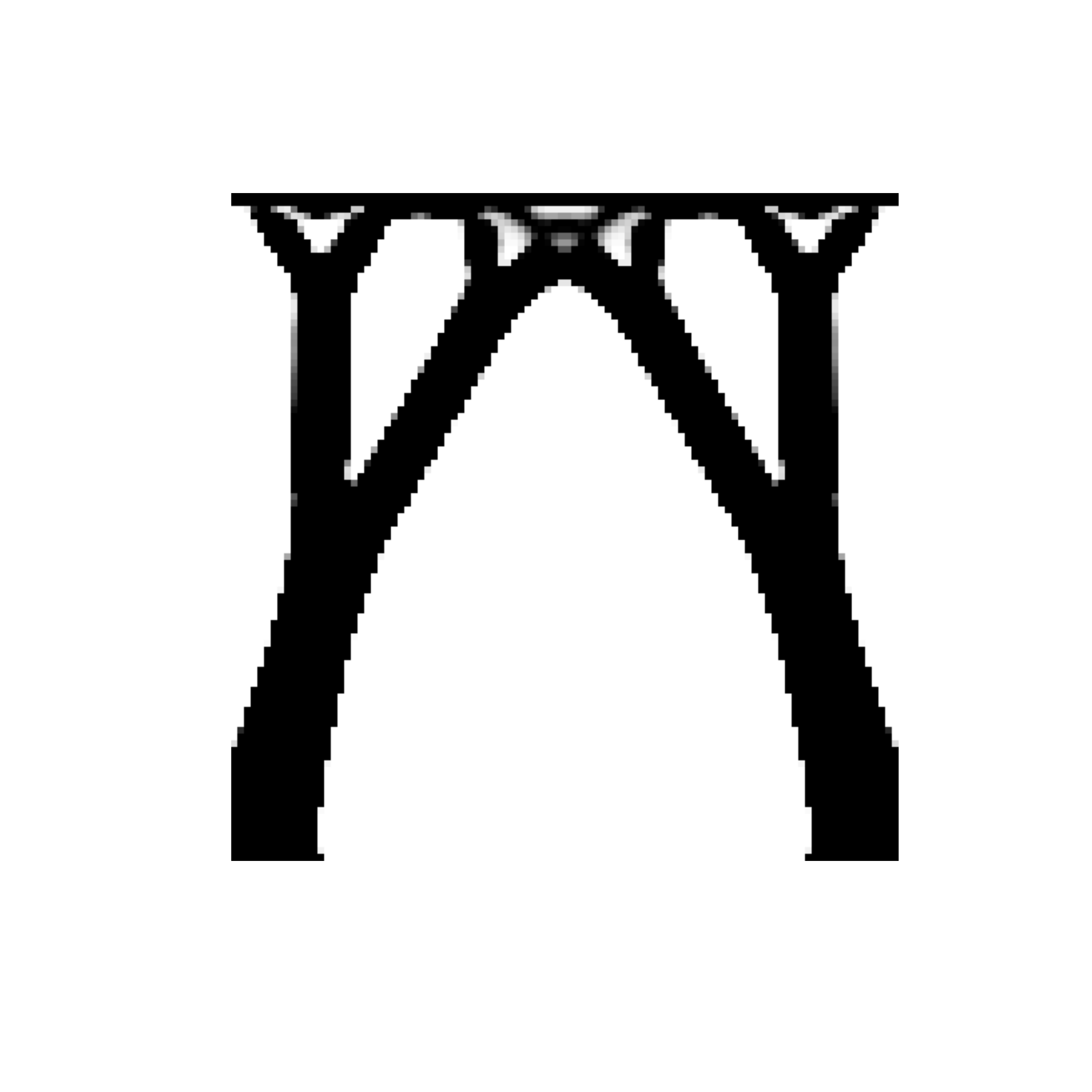}&\hspace{-1.0cm}\includegraphics[width=1.5in]{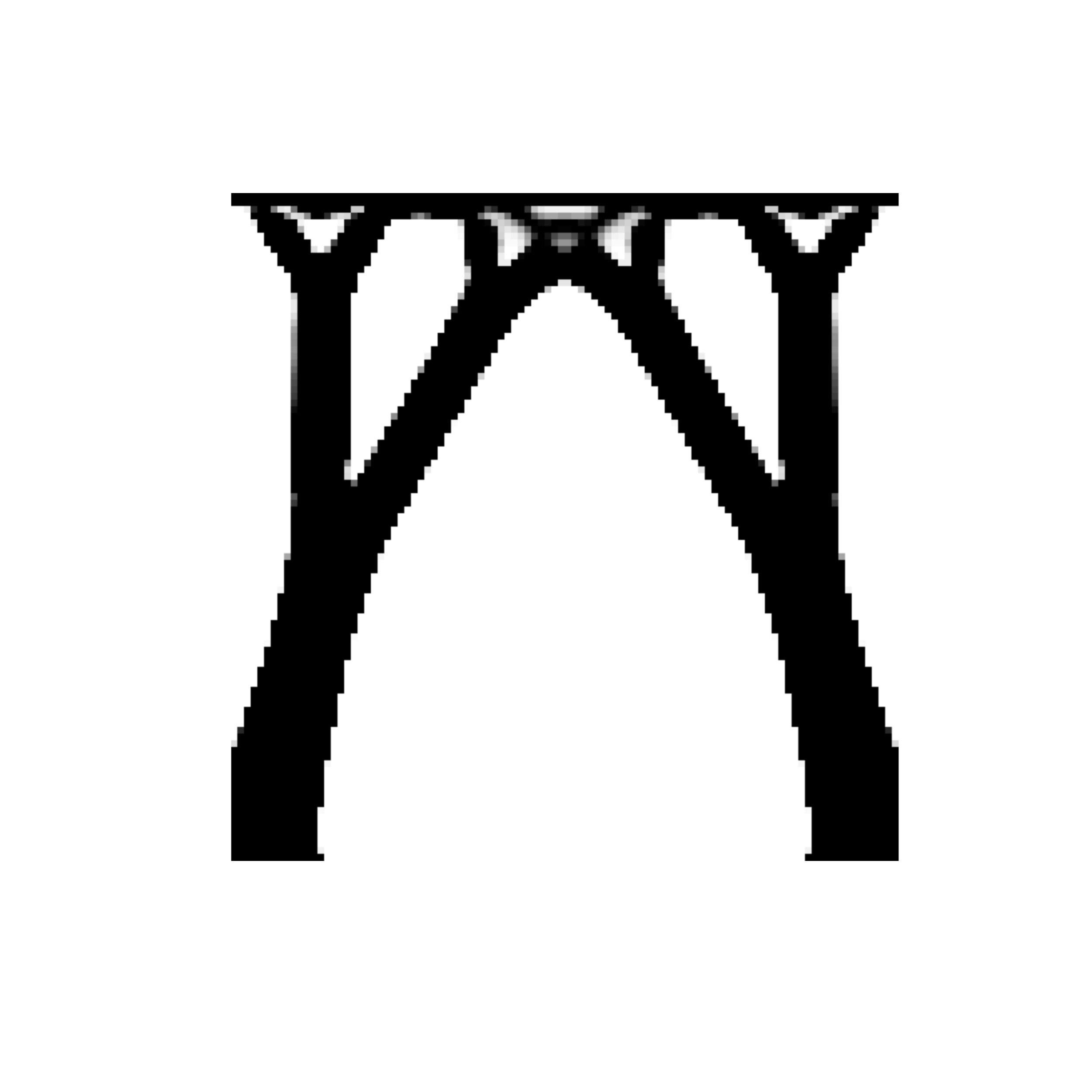}&\hspace{-1.0cm}\includegraphics[width=1.5in]{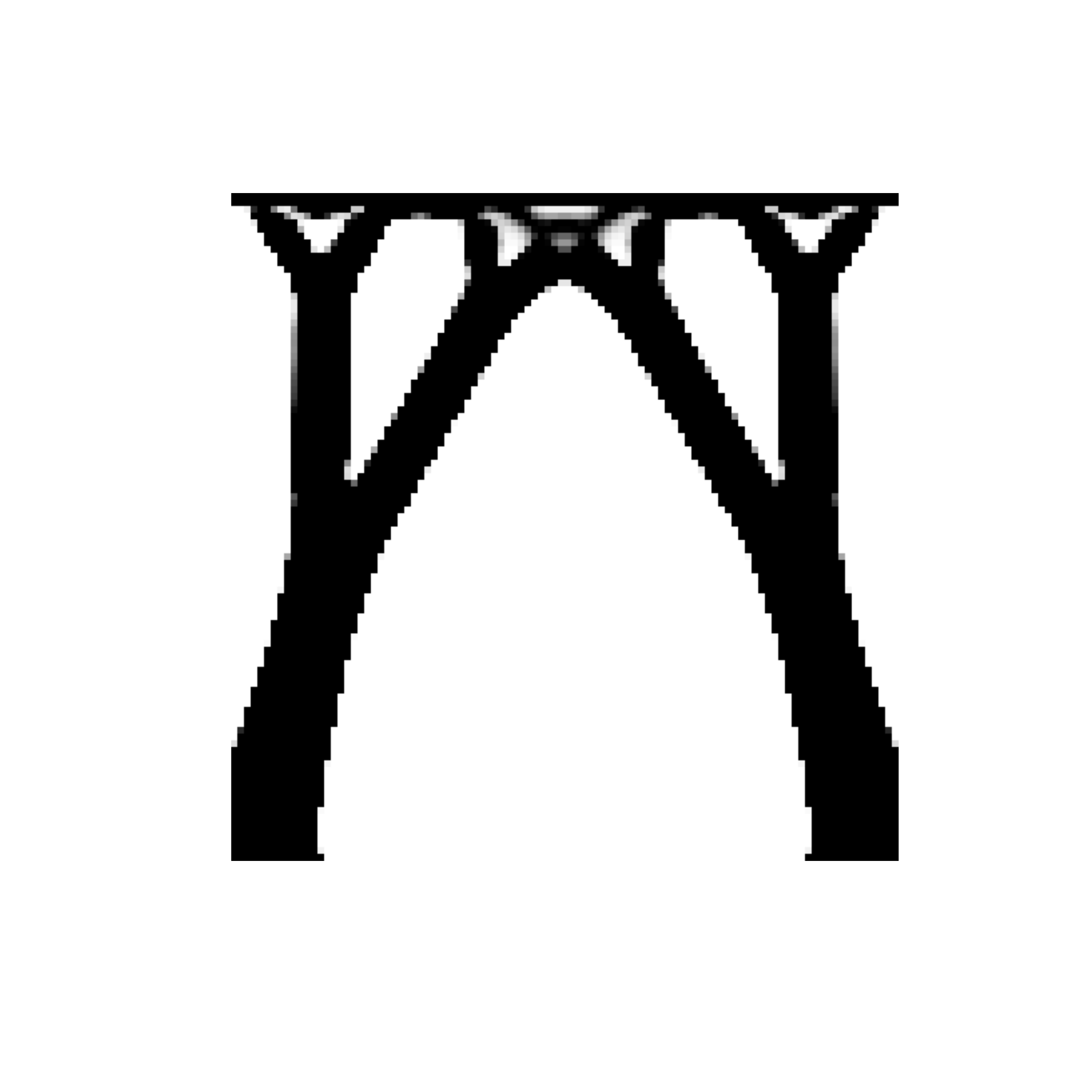}
\end{tabular}
\caption{{Topology optimization results for different meshes: Single resolution optimization with $4 \times 4$, $10 \times 10$, $20 \times 20$, $50 \times 50$ and $100 \times 100$ meshes(top row); Bi-fidelity optimization with $4 \times 4$, $10 \times 10$, $20 \times 20$, $50 \times 50$ meshes (bottom row). The top right figure is obtained with $148$ high-resolution simulations (on $100\times100$ mesh) at each design iterations whereas the bottom plots associated with $10\times 10, 20\times20,50\times50$ are obtained with only $11$ high-resolution simulations. The bottom left plot associated with $4\times4$ mesh is obtained with $6$ high-resolution simulations.}}\label{fign_vary_mesh}
\end{figure}

We compute the difference between optimal designs obtained from the bi-resolution approach and high-resolution design as $e_{\bm \rho} = || \bm \rho^{B} - \bm \rho^H||/\sqrt{n^H_{elem}}$ where $n^H_{elem}=10^4$ in this case. Similarly we define the error in the objective function $Q$ cf. Equation~\eqref{to1_0_1Q} as  $e_{Q} = | Q^{B} - Q^H|/{Q^H}$. Table~\ref{tabNEload_1} shows the number of iterations, number of high resolution simulation which is $6$ for the coarsest mesh and $11$ for the rest of meshes at each iteration, as well as $e_{\bm \rho}$ and $e_{Q}$. It is apparent that the bi-resolution topology optimization with $10 \times 10$ mesh yields almost the same design with much smaller cost.


\begin{table}[!h]
\caption{Loading uncertainty: Error vs cost for single and bi-resolution optimization.}
\centering
\begin{tabular}{l c c c c}
  \hline\hline
Resolution & No. Iter. & No. Hi. Res. Sim. & $e_{\bm \rho}$  & $e_Q$  \\
\hline
Hi. Res. $100 \times 100$    & 385 & 56980 & - & - \\
Bi-Res. $4 \times 4$   & 587 & 3522 &   1.09e-01 & 2.32e-02 \\
{\color{blue} Bi-Res. $10 \times 10$ }  & 385 & 4235 & 5.74e-05 & 2.01e-05\\
Bi-Res. $20 \times 20$   & 385 & 4235 & 5.74e-05 & 2.01e-05\\
Bi-Res. $50 \times 50$   & 385 & 4235 & 5.74e-05 & 2.01e-05\\
\hline\hline
  \end{tabular}
  \label{tabNEload_1}
\end{table}

To investigate the effect of standard deviation weight on the optimal design, we consider three values for $\lambda=0.001,~0.1,~1$ cf. Equation~\eqref{to1_0_1Q}. Figure~\ref{fign_lambda_Q} shows the optimization iteration for both single and bi-resolution which are almost identical for different values of $\lambda$. We show the corresponding designs in Figure~\ref{fign_lambda_design} where again similar topologies are obtained.

\begin{figure}[!h]
\centering
\includegraphics[width=5.5in]{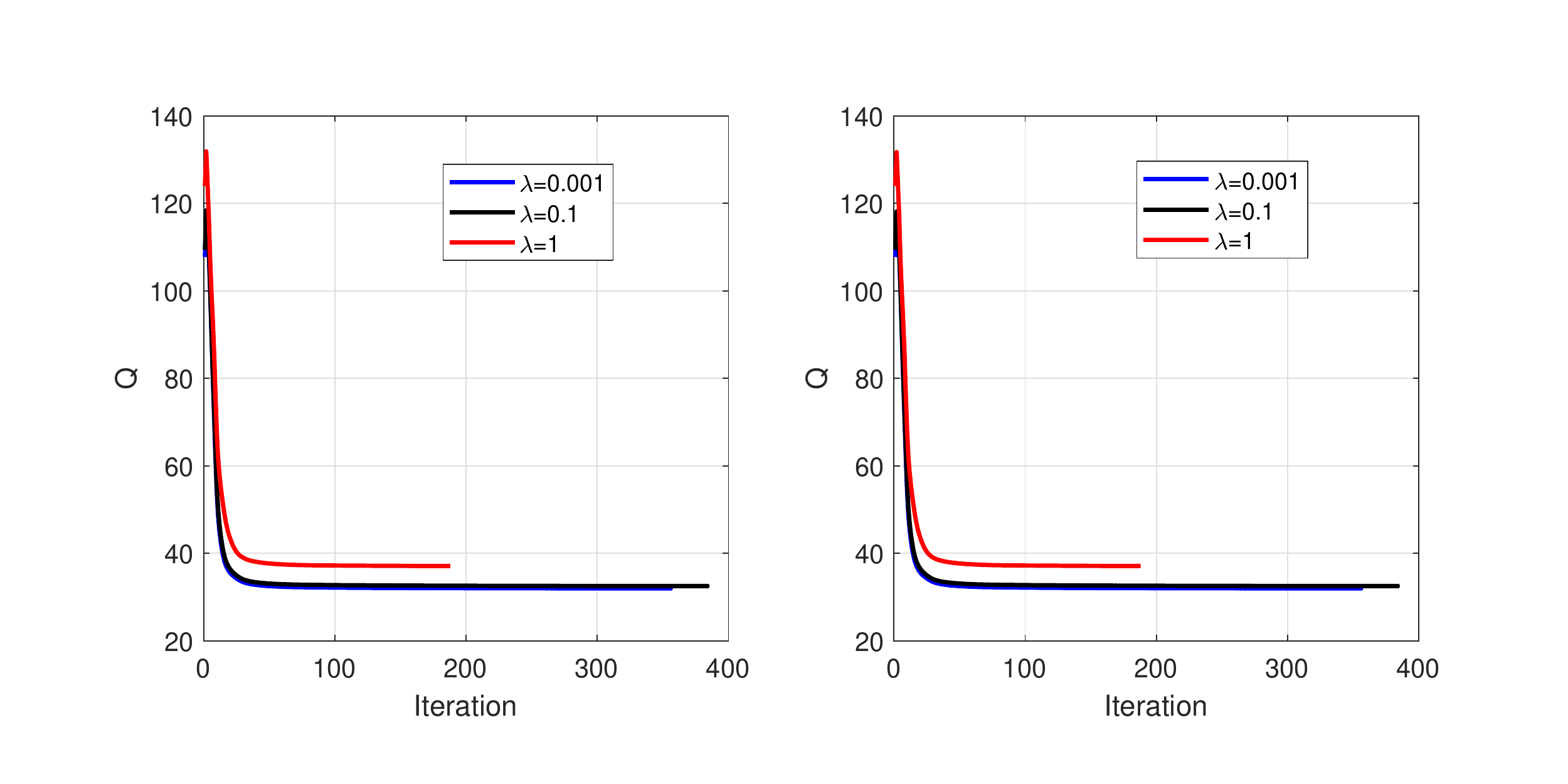}
\caption{{Effect of $\lambda$ on optimized objective function $Q$. Left and right figures are obtained from single high resolution $100 \times 100$ optimization and bi-fidelity optimization with $10 \times 10$ mesh respectively.}}\label{fign_lambda_Q}
\end{figure}

\begin{figure}[!h]
\begin{tabular}{c c c }
\vspace{-0.5cm}
\hspace{1.5cm}\includegraphics[width=1.5in]{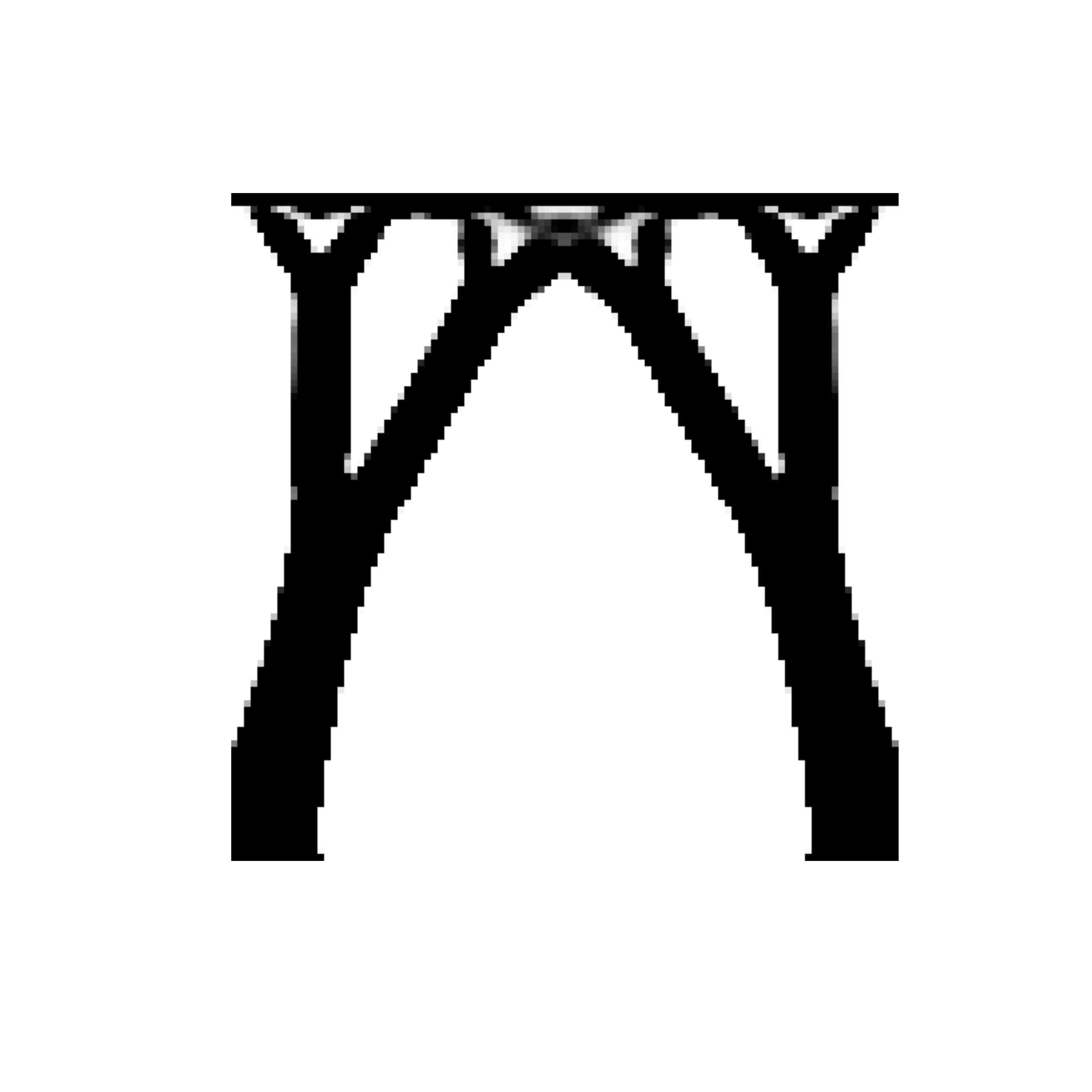}&\includegraphics[width=1.5in]{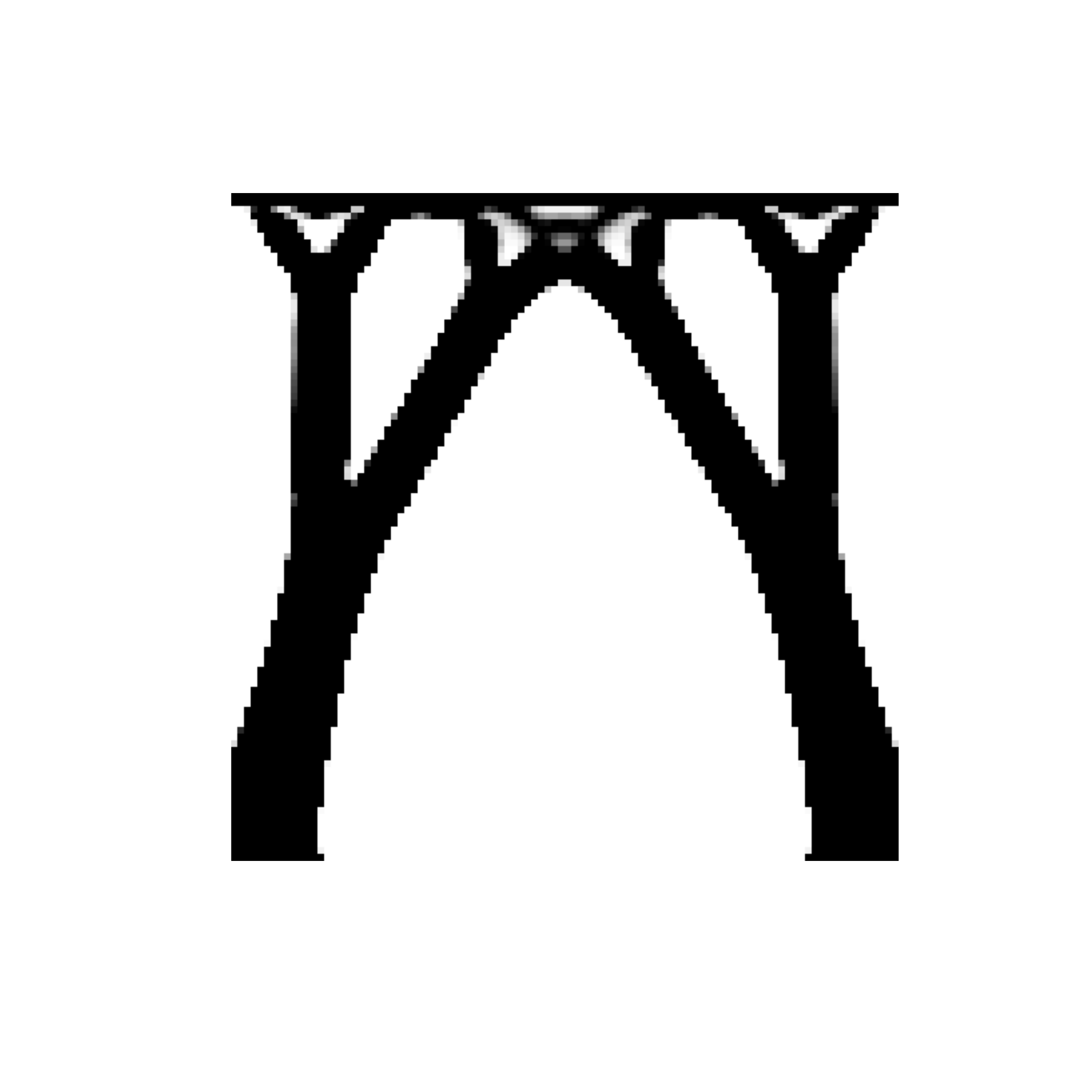}&\includegraphics[width=1.5in]{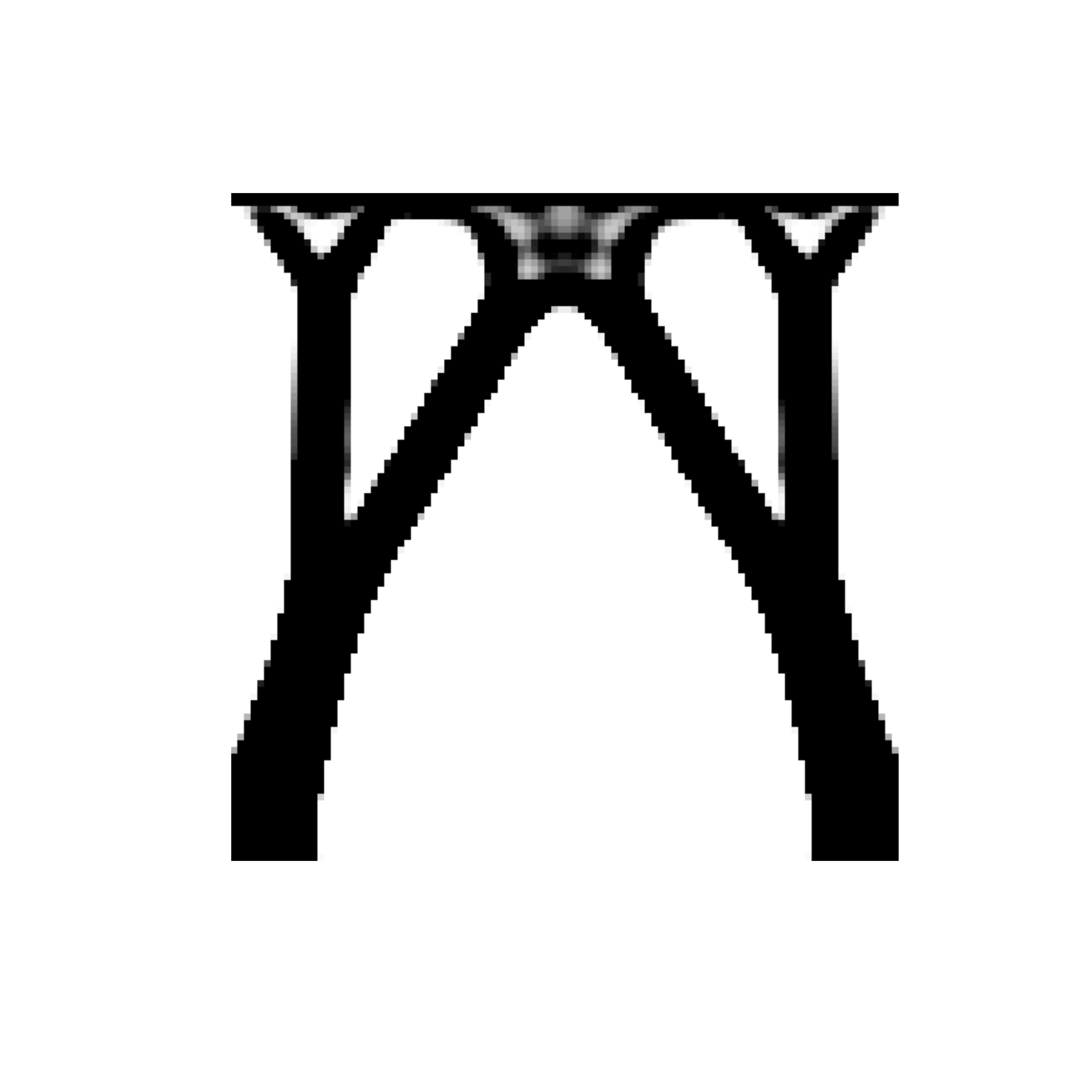}\\
\hspace{1.5cm}\includegraphics[width=1.5in]{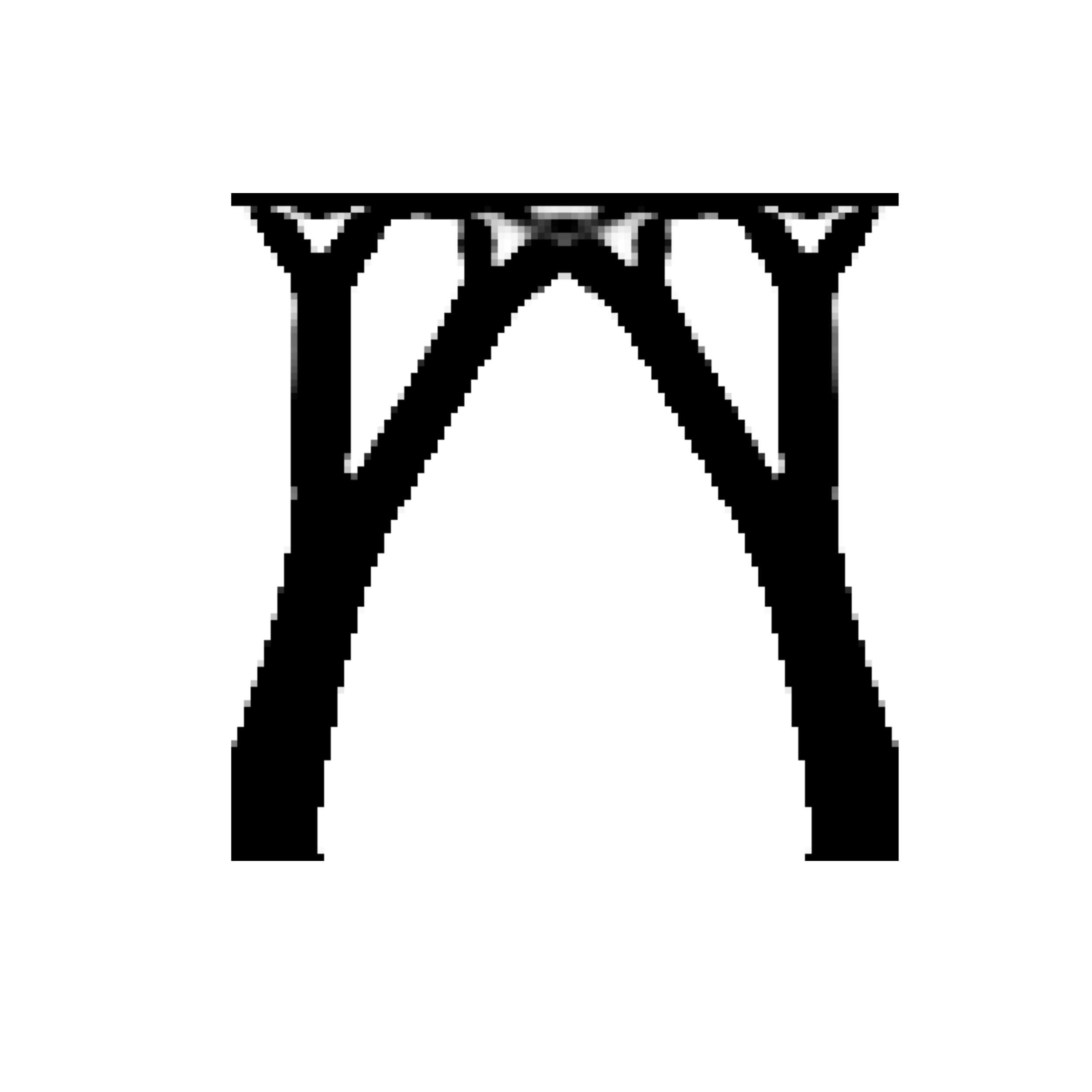}&\includegraphics[width=1.5in]{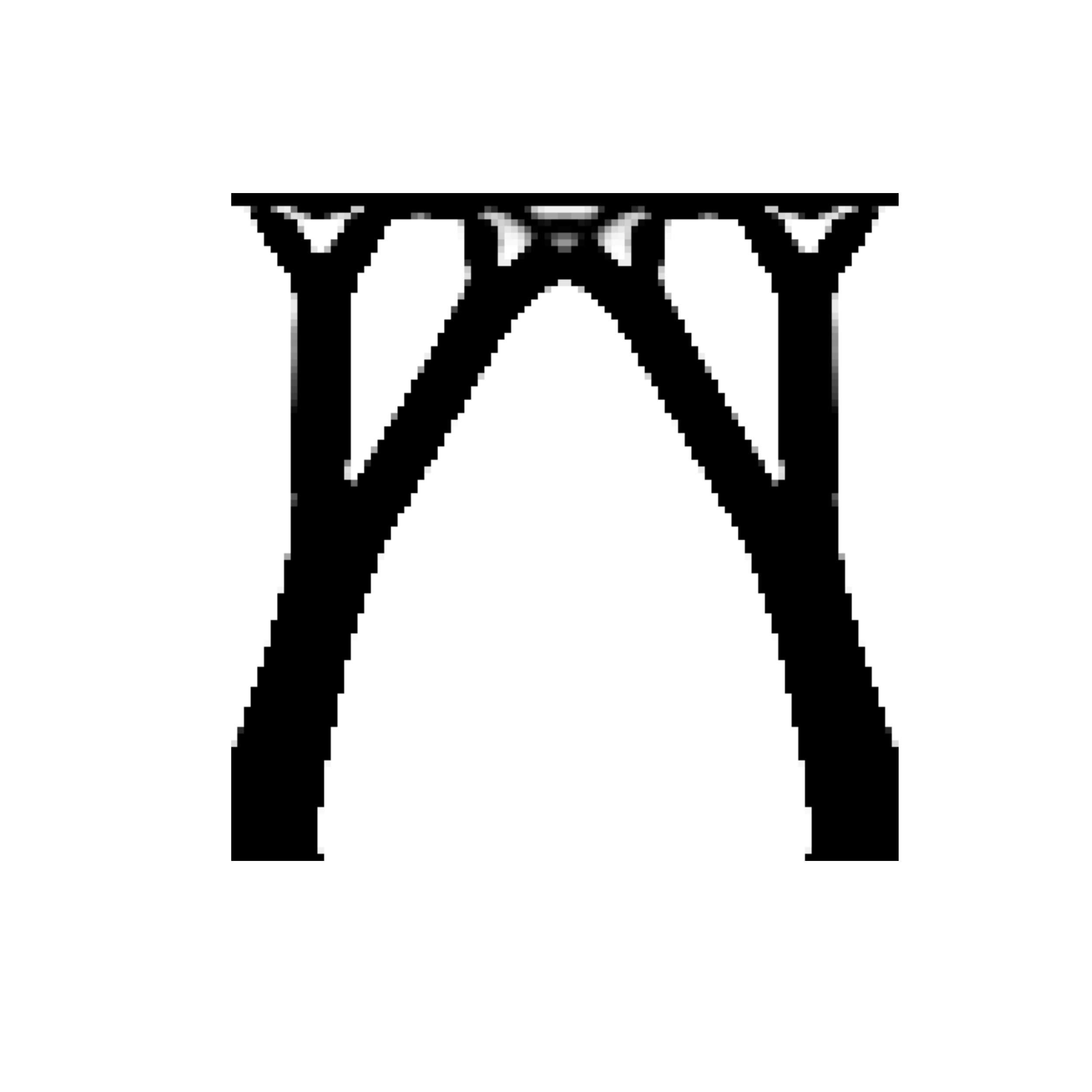}&\includegraphics[width=1.5in]{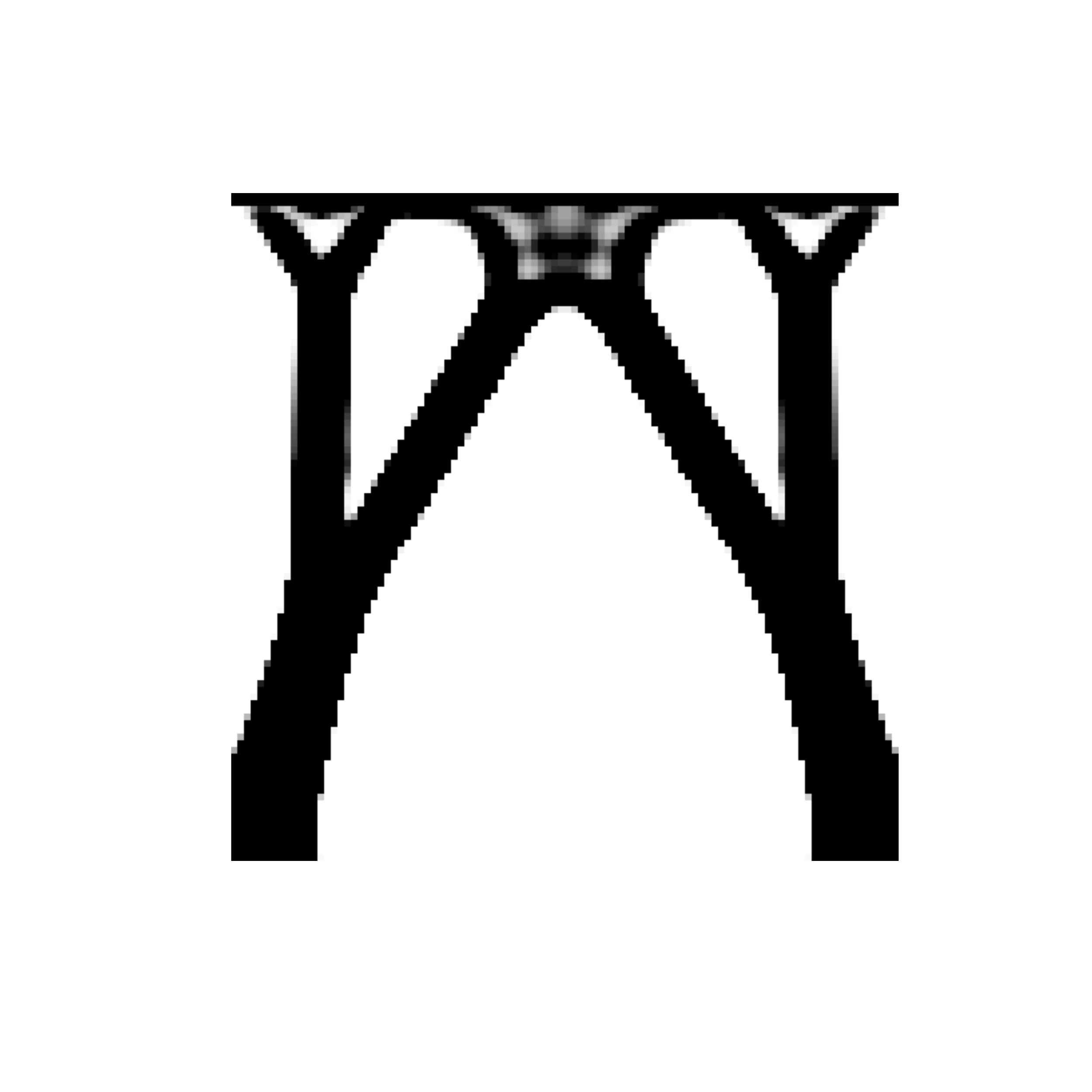}\\
\end{tabular}
\caption{{Effect of $\lambda$ on optimal design: $\lambda=0.001$ (left),  $\lambda=0.1$ (middle),  $\lambda=1$ (right). Top and bottom figures are obtained from single high resolution $100 \times 100$ optimization and bi-fidelity optimization with $10 \times 10$ mesh respectively.}}\label{fign_lambda_design}
\end{figure}

Finally we compute the error bound in approximation of displacement, compliance and compliance sensitivity. To that end, we consider the first iteration where the densities are considered uniformly $\bm \rho=0.35$. We also consider $n=11$ with $10 \times 10$ mesh as the full rank of the low fidelity model. As mentioned earlier to obtain $\delta$ we directly compute the two norms in~\eqref{eq_estimate_1} for the first few unimportant samples. The maximum ratio is computed to be $\delta=0.916$ for the third sample after $n=11$ samples. The maximum norm for the stiffness matrix and its derivative with respect to the first design variable (the element on the bottom left corner) are $\sigma_{max}(\bm K)=1.0476,~ \sigma_{max}(\partial \bm K/\partial \rho_1)=1.0714$. We have also computed $\sigma_{max}(\bm G^H)=113.384$ and $\epsilon=2.825e-06$ directly from the high-fidelity data. The actual and estimated errors for the aforementioned sample are listed in Table~\ref{tabNEload_estimate}.

\begin{table}[!h]
 \caption{Actual error vs upper bound estimate. Estimated values are computed as the bounding certificates in Lemma \ref{lemma1} and Proposition \ref{prop:C-error}.}
\centering
\begin{tabular}{l c c c}
  \hline\hline
 & $\| \bm u - \hat{\bm u}\|$ & $|C-\hat{C}|$   & $\| \partial C/\partial \rho_1 -  \partial \hat{C}/\partial \rho_1 \| $ \\
 \hline
Actual     & 3.64e-07 &  1.57e-08 & 3.18e-09\\
Estimate & 5.76e-05  & 3.26e-04 &3.33e-04\\

\hline\hline
  \end{tabular}
  \label{tabNEload_estimate}
\end{table}
From this single point it is evident that the upper bound is relatively small. The actual error for bi-fidelity surrogate is even smaller which promises almost identical designs for parametric topology optimization as evidenced by Figure~\ref{fign_vary_mesh}.

\subsection{Manufacturing Tolerances}

In this example, we consider uncertainty in the thresholding parameter $\tau$ cf. Equation~\eqref{to3} to mimic the geometric variations in the thickness of resulting truss bars in the L-shape domain shown in Figure~\ref{figLshape}.
\begin{figure}[!h]
\centering
\includegraphics[width=2.5in]{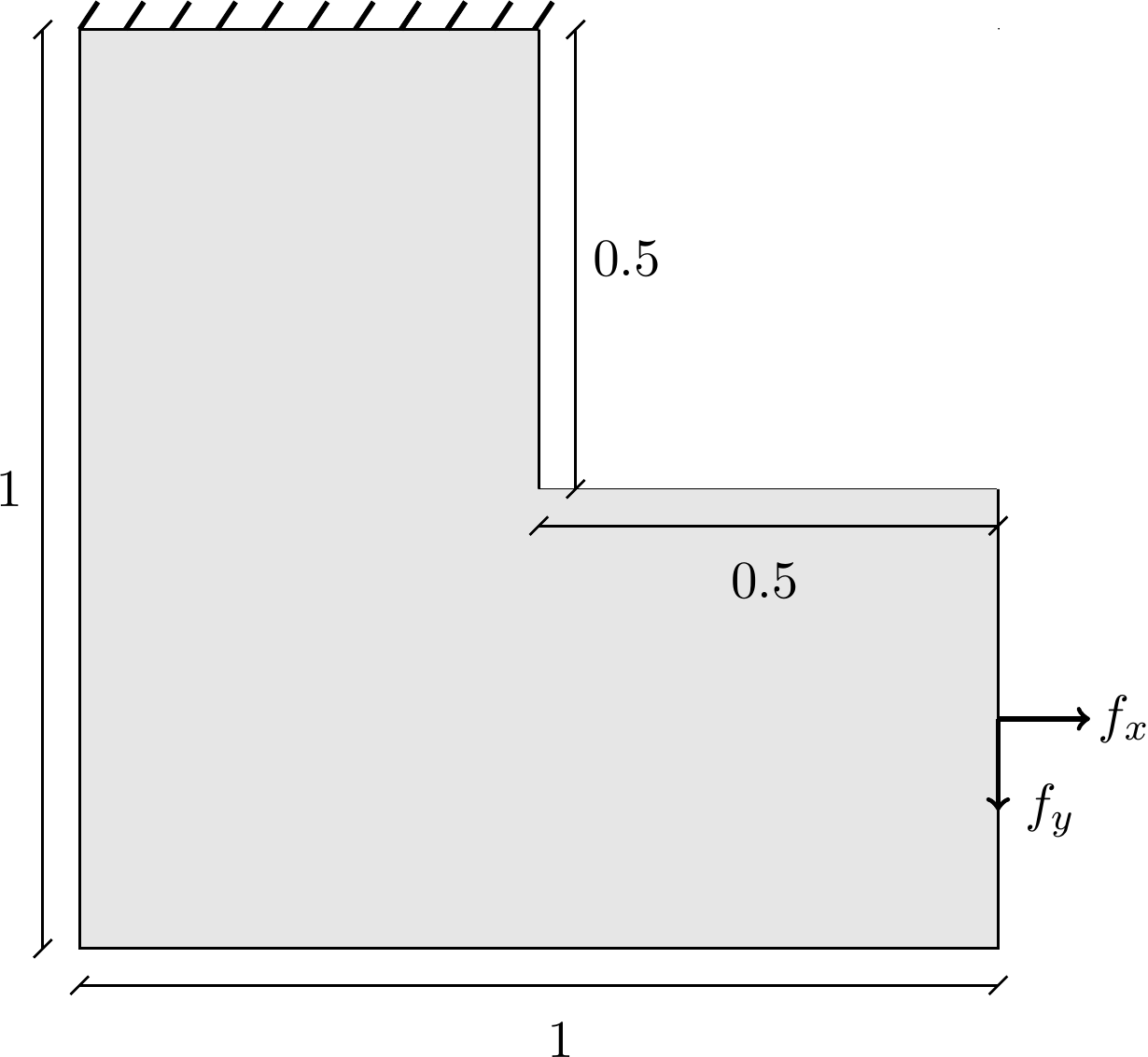}
\caption{{L-Bracket}}\label{figLshape}
\end{figure}
We use $l_c=0.85$ in Equation~\eqref{pcen212} which results in $\sum_{i=1}^{4} \sqrt{\lambda_i}/\sum_{i=1}^{100} \sqrt{\lambda_i}=0.88$. We also use $N=43$ \textit{designed quadrature} points which integrate $d=4$ dimensions with order $\alpha=6$ accurately~\cite{Keshavarzzadeh2018}. The $n_M=4$ Karhunen-Loeve modes and $4$ realizations of spatial threshold $\tau=0.1Z+0.45$ corresponding to arbitrary quadrature nodes are shown in Figure~\ref{fign_KLE_K}.

\begin{figure}[!h]
\centering
\includegraphics[width=6.5in]{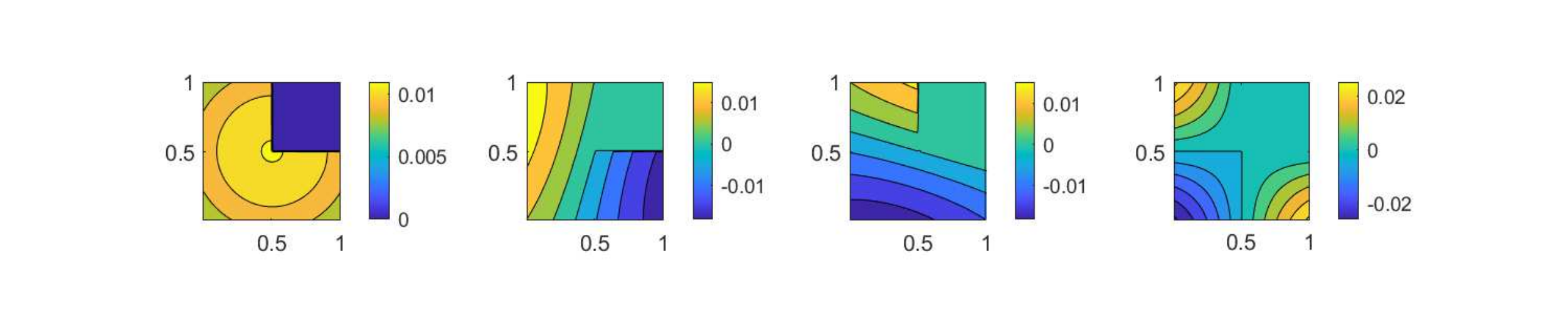}
\vspace{-0.50cm}
\includegraphics[width=6.5in]{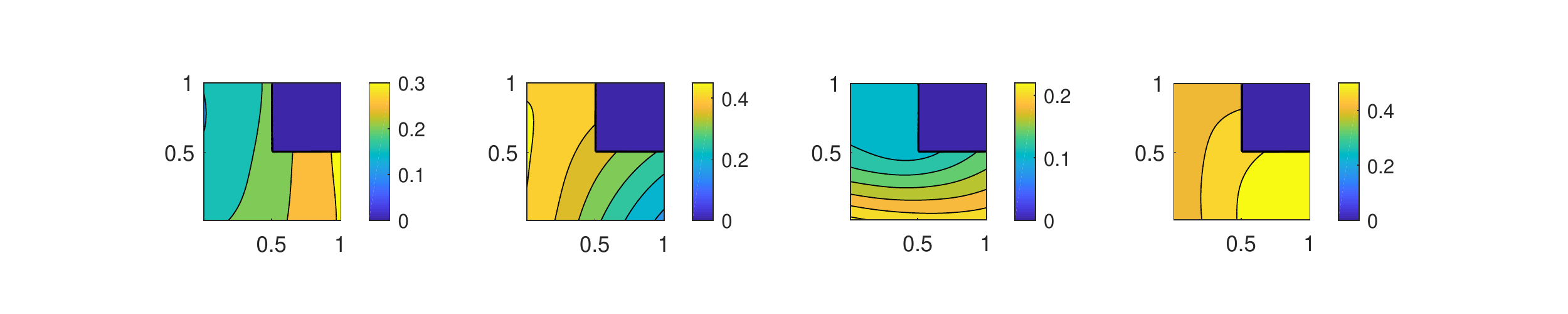}
\caption{{Karhunen-Loeve modes for spatial random field (top), different realizations of spatial threshold on quadrature points (bottom).}}\label{fign_KLE_K}
\end{figure}

Similarly to loading uncertainty, we consider different number of high-resolution simulations $n$ in the bi-fidelity construction and show the difference between bi-fidelity approximation and high-fidelity solutions for the displacement, compliance and compliance sensitivity cf. Figure~\ref{fign_vary_K_n}. Again as expected as the number high resolution simulations increase more accurate bi-fidelity approximations are obtained.

\begin{figure}[!h]
\centering
\includegraphics[width=6.5in]{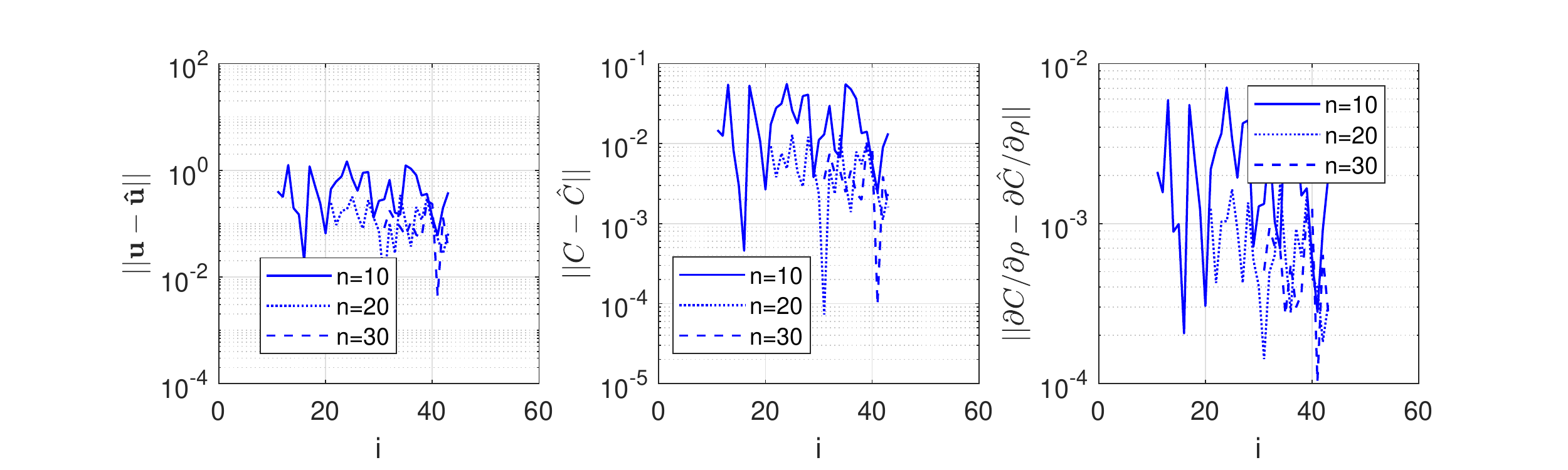}
\vspace{-0cm}
\includegraphics[width=6.5in]{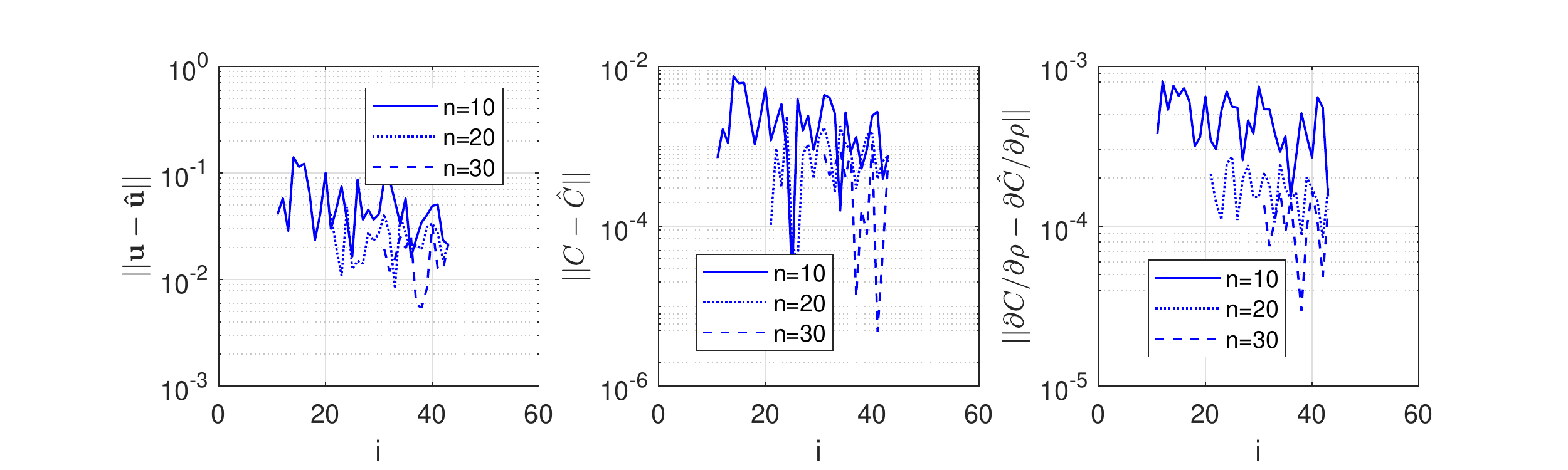}
\caption{{Bi-fidelity actual approximation error for displacement, compliance and compliance sensitivity with respect to different number of high-resolution simulations $n$ for $4 \times 4$ (top) and $10 \times 10$ (bottom) meshes.}}\label{fign_vary_K_n}
\end{figure}

We use filter radius $r_{min}=6,3,2.5,2,1.5$ for different meshes in this case. Topology optimization results for single and bi-resolution models are shown in Figure~\ref{fign_vary_mesh_tol_K}. In the bi-resolution optimizations only $10$ high-resolution simulations
are performed while the single resolution is performed with $43$ simulations.

\begin{figure}[!h]
\begin{tabular}{l l l l l}
\vspace{-0.5cm}
\hspace{-0.4cm}\includegraphics[width=1.5in]{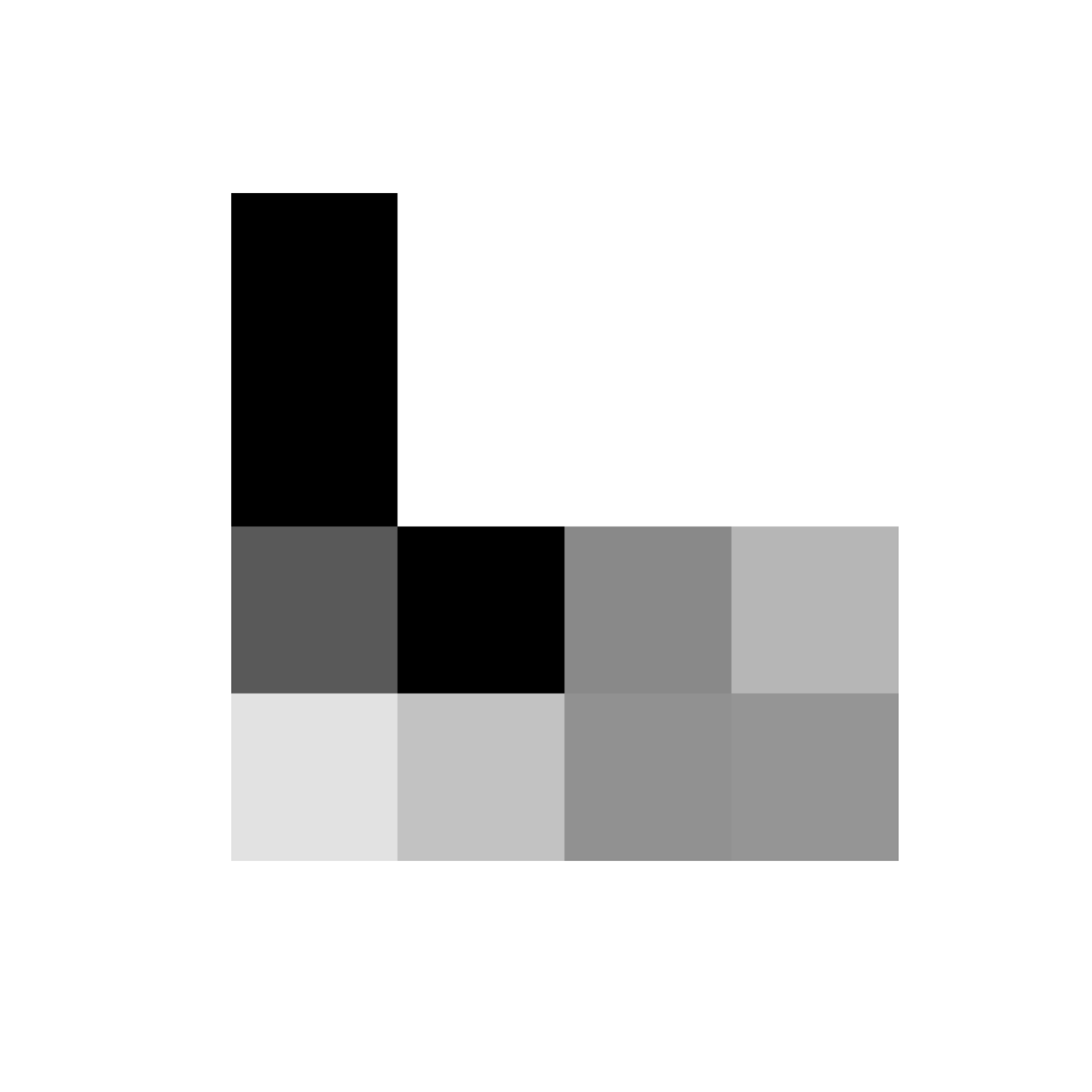}&\hspace{-0.8cm}\includegraphics[width=1.5in]{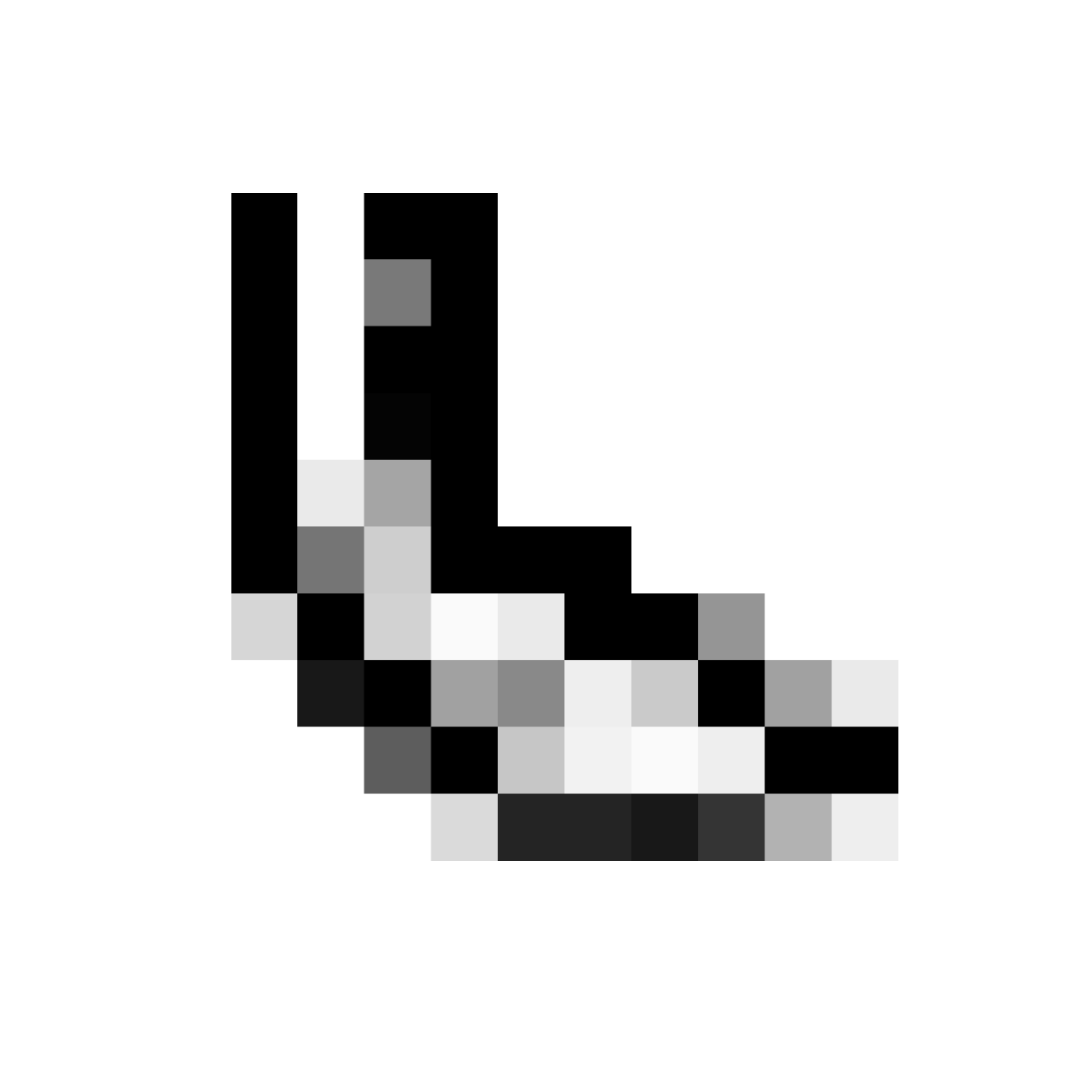}&\hspace{-1.0cm}\includegraphics[width=1.5in]{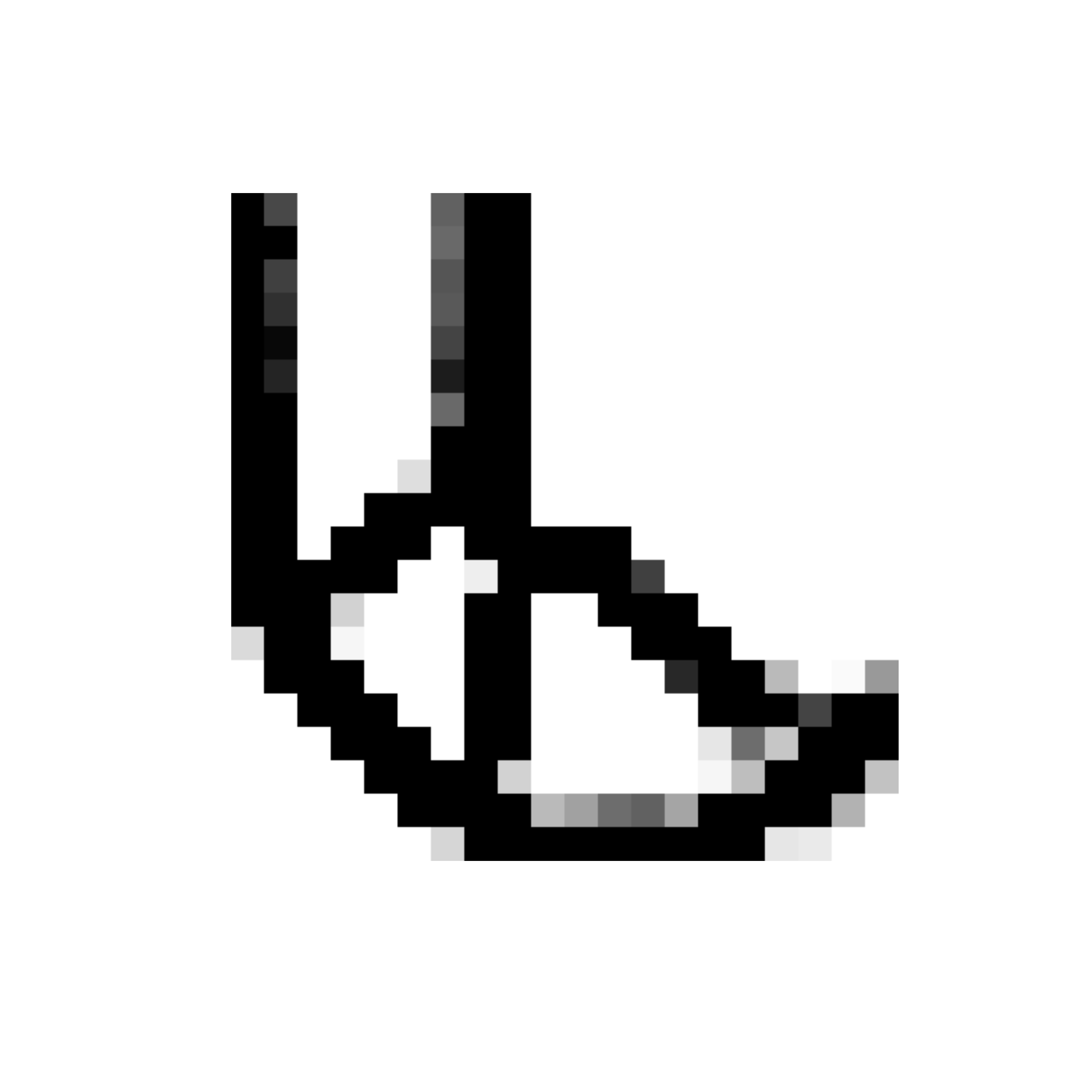}&\hspace{-1.0cm}\includegraphics[width=1.5in]{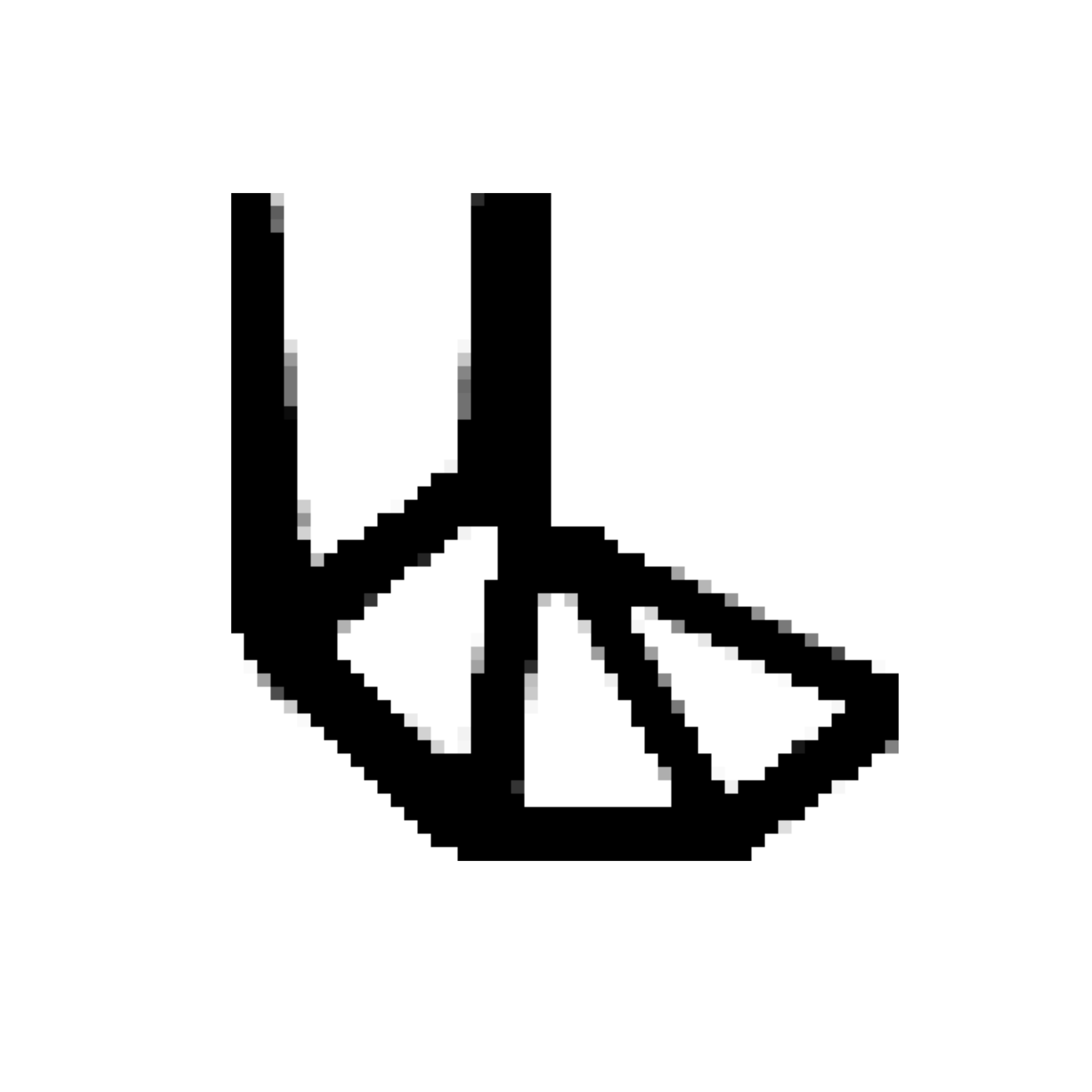}
&\hspace{-1.1cm}\includegraphics[width=1.5in]{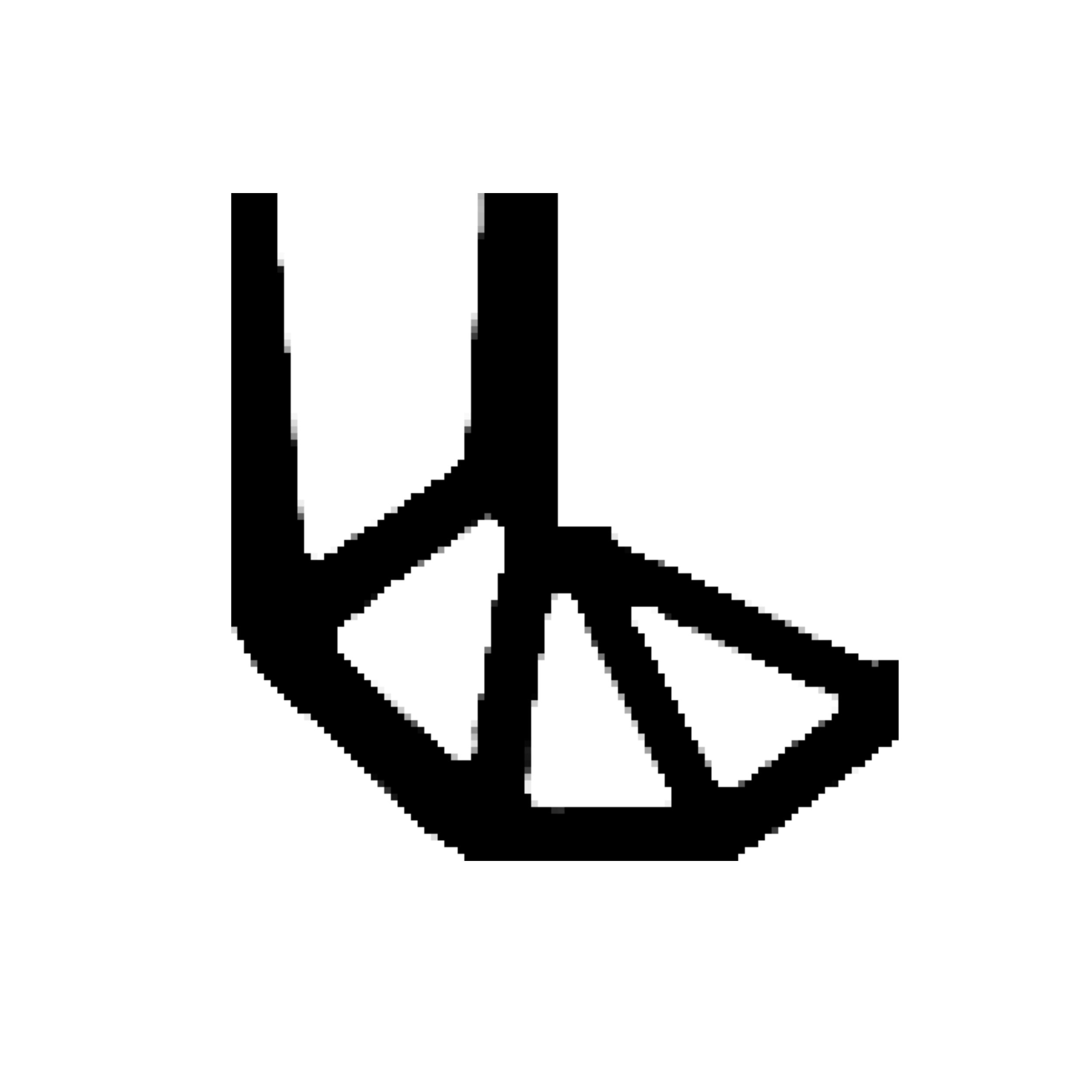}\\
\hspace{-0.4cm}\includegraphics[width=1.5in]{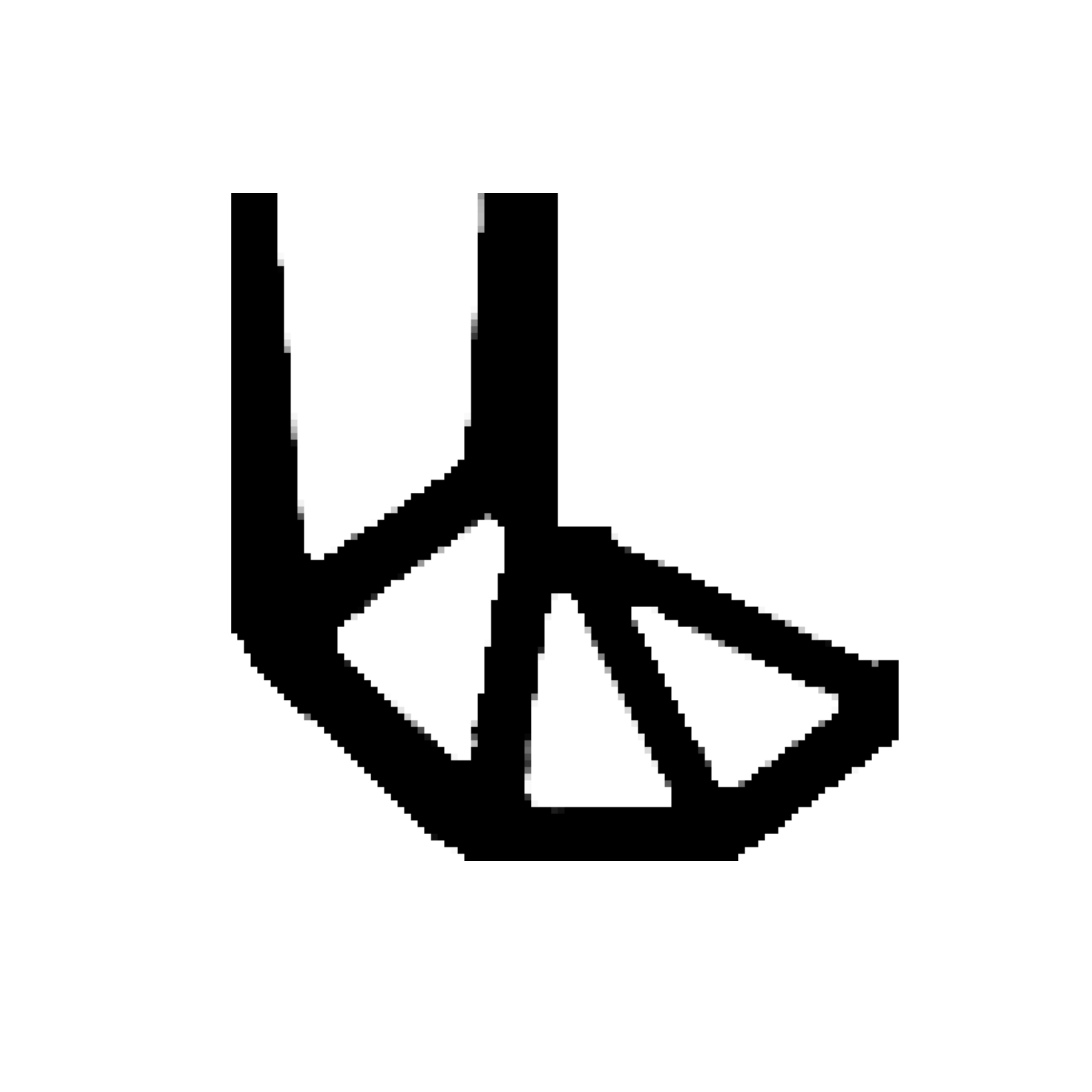}&\hspace{-0.8cm}\includegraphics[width=1.5in]{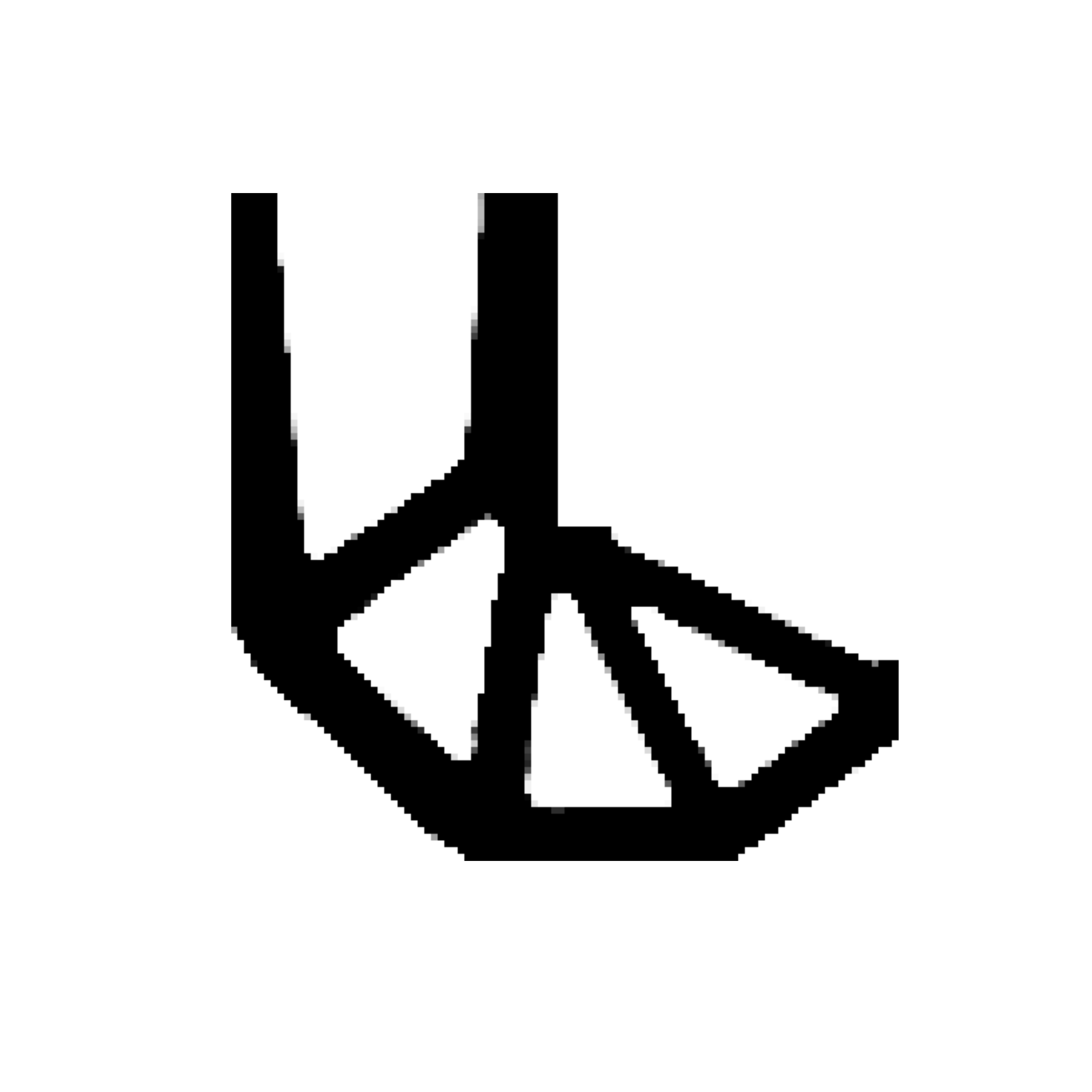}&\hspace{-1.0cm}\includegraphics[width=1.5in]{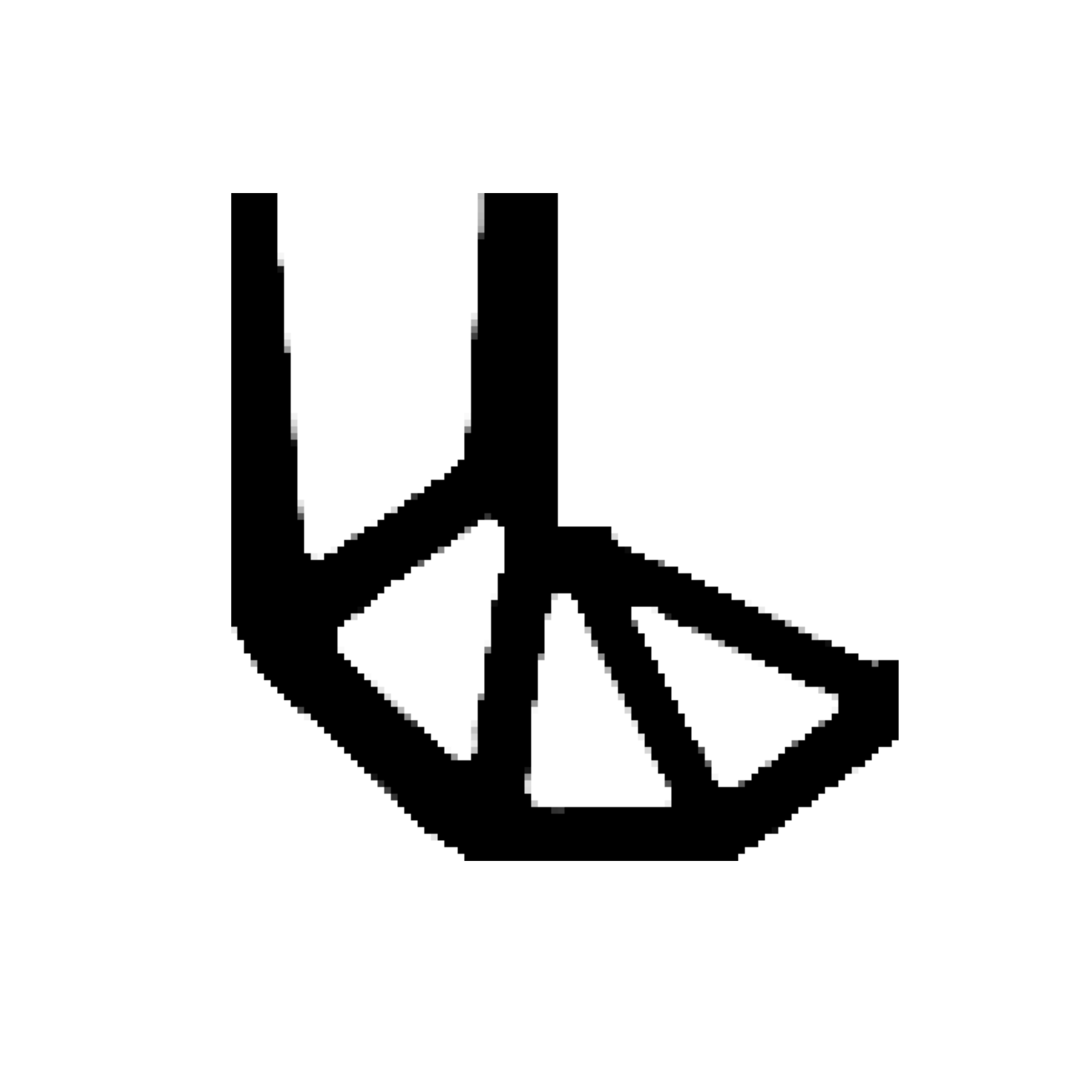}&\hspace{-1.0cm}\includegraphics[width=1.5in]{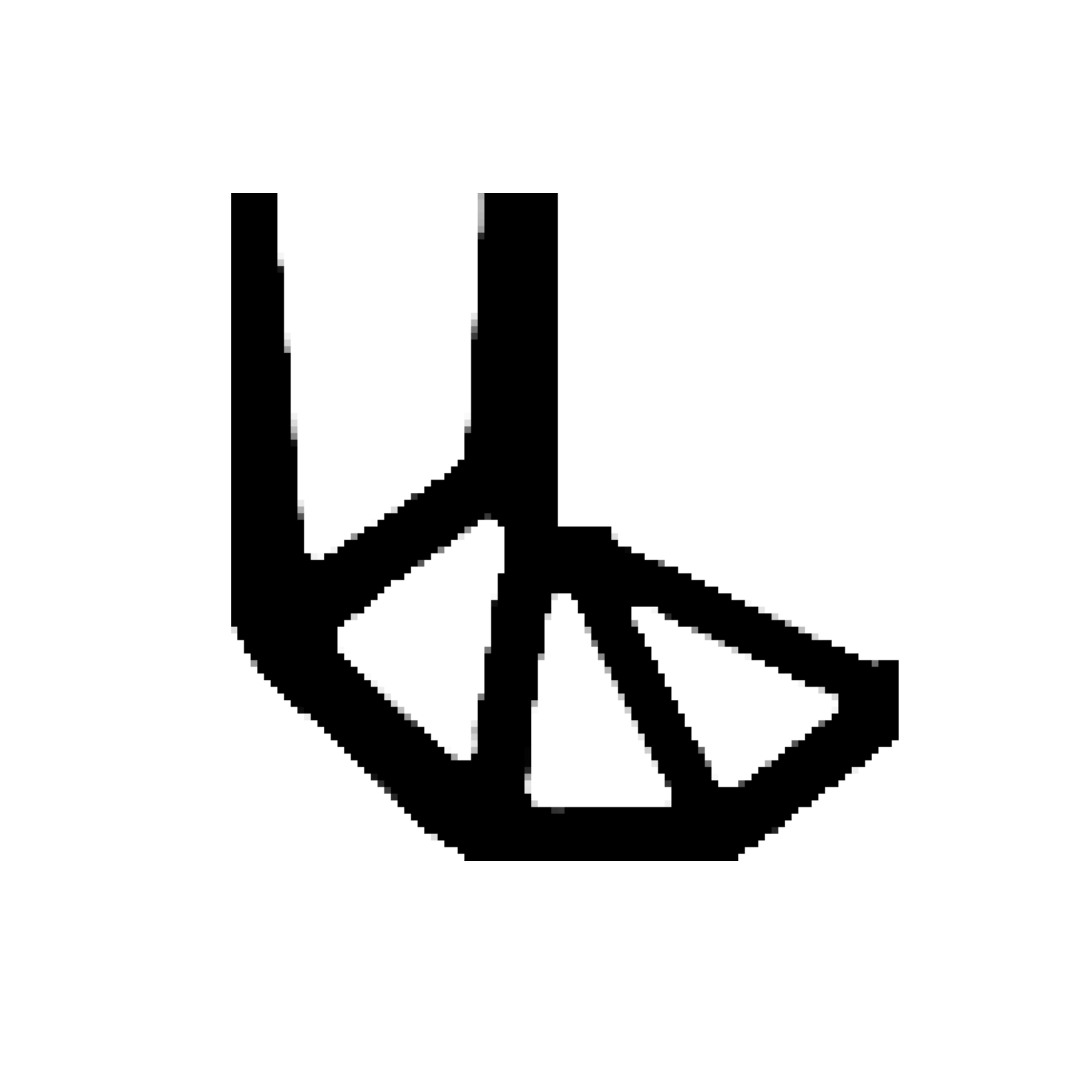}
\end{tabular}
\caption{{Topology optimization results for different meshes: Single resolution optimization with $4 \times 4$, $10 \times 10$, $20 \times 20$, $50 \times 50$ and $100 \times 100$ meshes(top row); Bi-fidelity optimization with $4 \times 4$, $10 \times 10$, $20 \times 20$, $50 \times 50$ meshes (bottom row). All bi-resolution results are obtained with $10$ high-resolution simulations which are almost identical to the top right plot which uses $43$ simulations at each design iterate. }}\label{fign_vary_mesh_tol_K}
\end{figure}

To quantify differences between single and bi-resolution optimizations we perform the same study as done in previous example. Table~\ref{tabNEF_1} shows the error versus the cost for single and bi-resolution optimizations. We again observe that $10\times10$ mesh is the most economical choice as it yields the small error while the most of computation is performed on its relatively coarse mesh.

\begin{table}[!h]
\caption{Geometric uncertainty: Error vs cost for single and bi-resolution optimization.}
\centering
\begin{tabular}{l c c c c}
  \hline\hline
Resolution & No. Iter. & No. Hi. Res. Sim. & $e_{\bm \rho}$   & $e_Q$  \\
\hline
Hi. Res. $100 \times 100$    & 240 & 10320 & - & -\\
Bi-Res. $4 \times 4$   & 283 & 2830 &   0.0236 & 4.24e-03 \\
{\color{blue} Bi-Res. $10 \times 10$ }  & 276 & 2760 & 0.0155 & 1.51e-04 \\
Bi-Res. $20 \times 20$   & 268 & 2680 & 0.0130 & 1.29e-04\\
Bi-Res. $50 \times 50$   & 255 & 2550 & 0.0093 & 9.10e-05\\
\hline\hline
  \end{tabular}
  \label{tabNEF_1}
\end{table}

As mentioned the processed design variables $\bar {\bm \rho}$ are random due to the randomness in $\tau$. We define the error in the mean and standard deviation of processed design variables between single and bi-resolution models as $e_{\mu(\bar{\bm \rho})}=\| \mu(\bar{\bm \rho}^B) - \mu(\bar{\bm \rho}^H)\|/\sqrt{n^H_{elem}}$ and $e_{\sigma(\bar{\bm \rho})}=\|\sigma(\bar{\bm \rho}^B) - \sigma(\bar{\bm \rho}^H)\|/\sqrt{n^H_{elem}}$ which are computed as $e_{\mu(\bar{\bm \rho})}=2.75e-03$ and $e_{\sigma(\bar{\bm \rho})}=1.86e-03$. Figure~\ref{fign_mean_std} shows the mean and standard variation for the processed design variables obtained from single and bi-resolution optimization.

\begin{figure}[!h]
\centering
\begin{tabular}{c c}
\vspace{-0.5cm}
\hspace{-0.4cm}\includegraphics[width=2.0in]{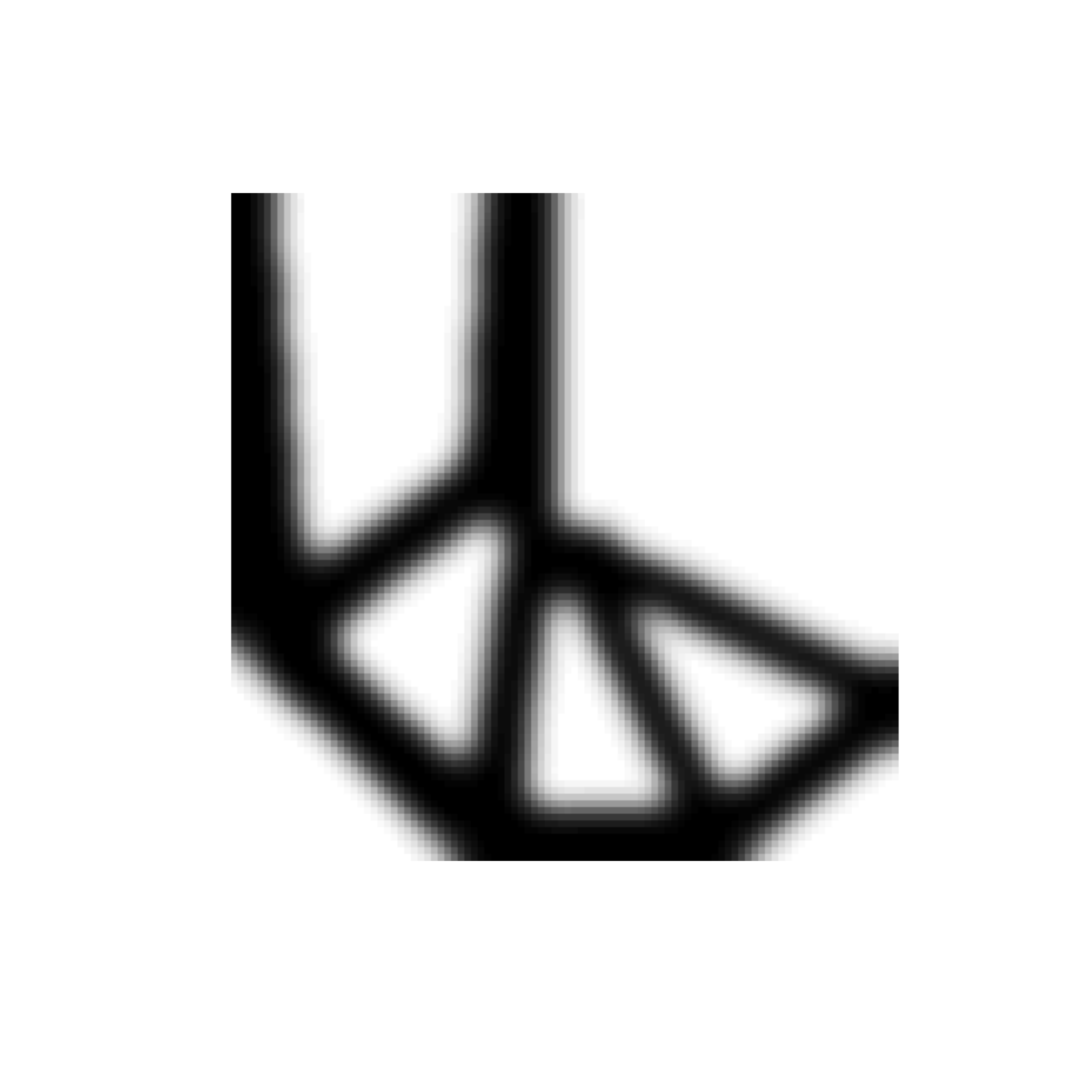}&\hspace{-0.8cm}\includegraphics[width=2.0in]{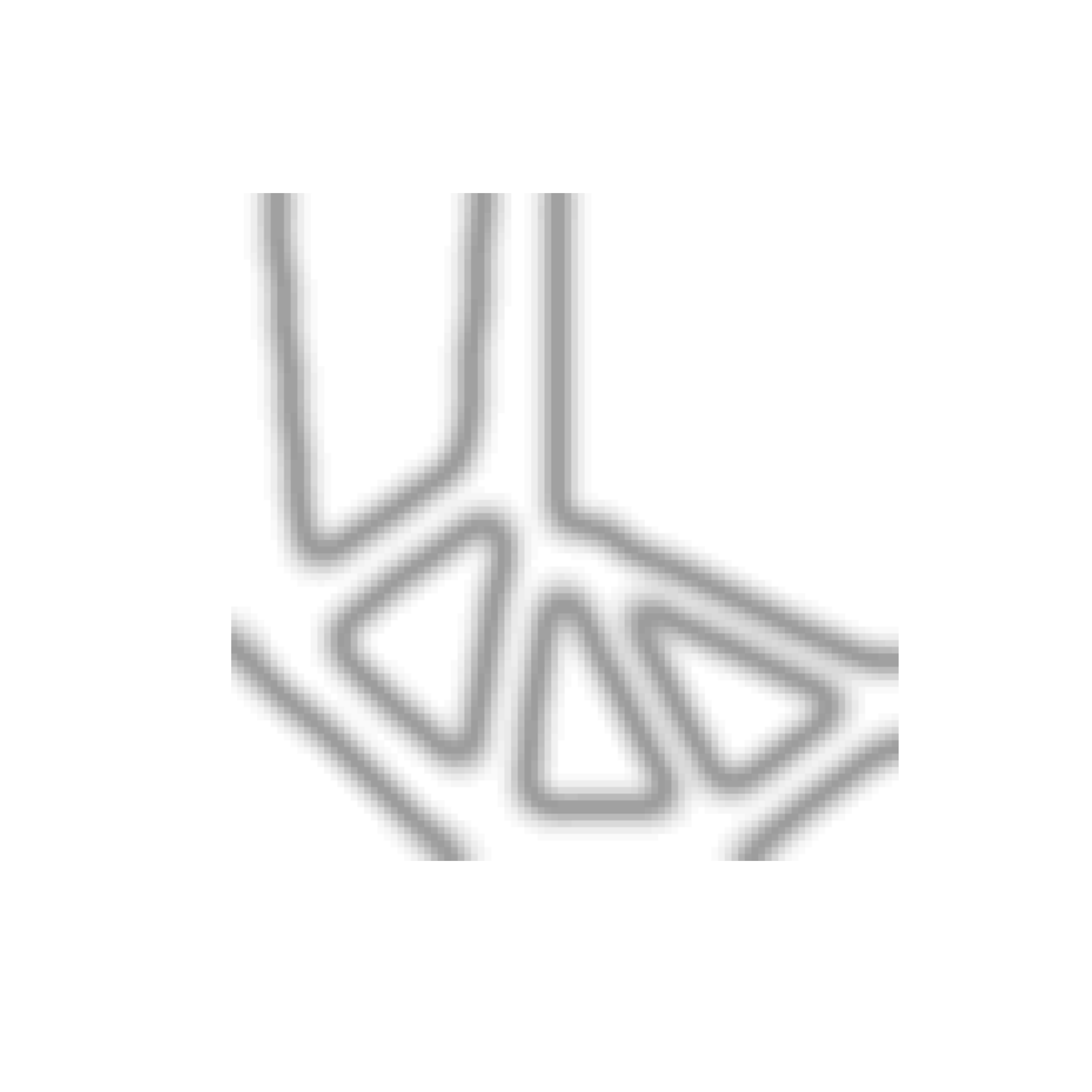}\\
\hspace{-0.4cm}\includegraphics[width=2.0in]{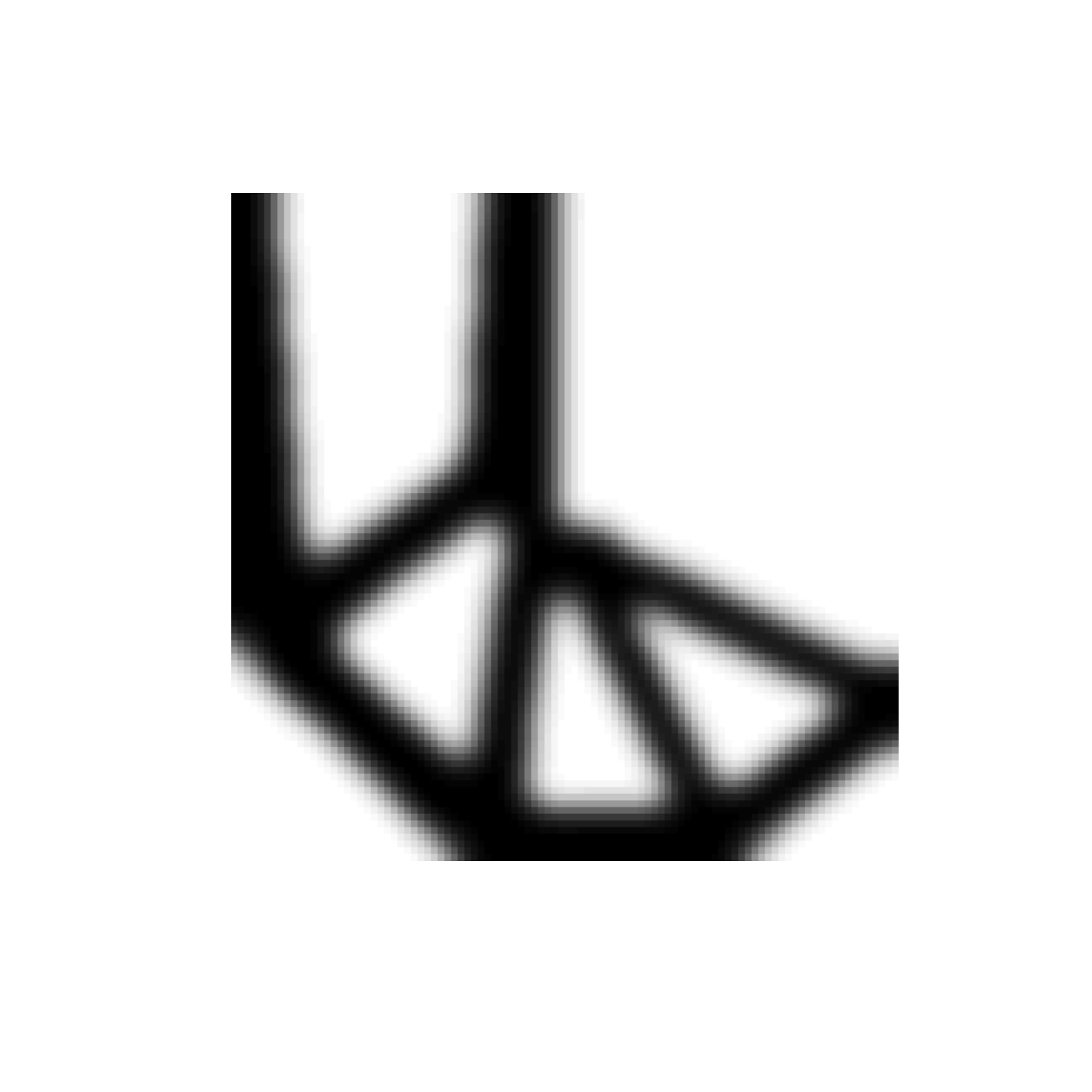}&\hspace{-0.8cm}\includegraphics[width=2.0in]{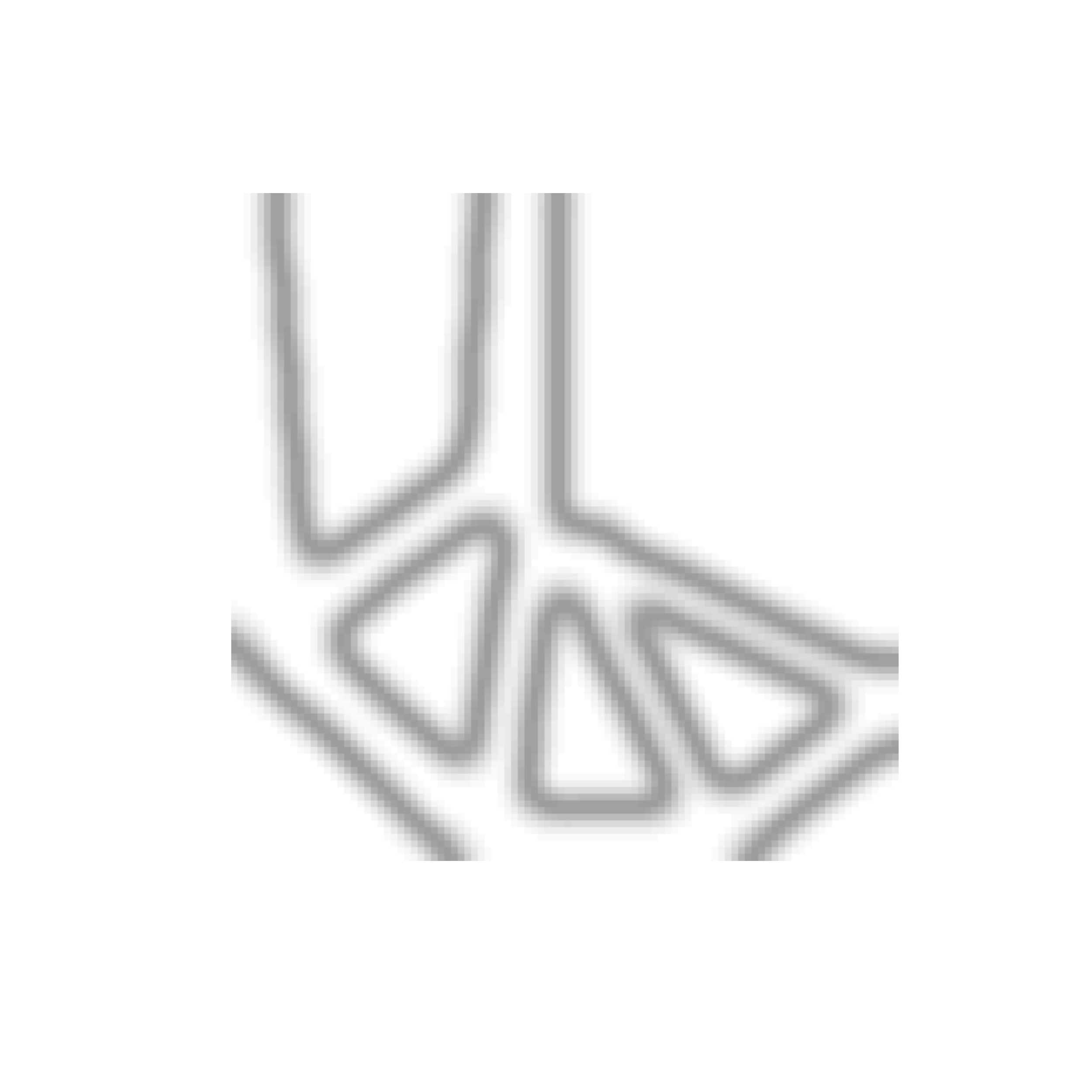}\\
\end{tabular}
\caption{{Mean (left) and standard deviation (right) of processed design variables $\bar{\bm \rho}$ obtained from bi-resolution optimization with $10 \times 10$ mesh (top) and single resolution of $100 \times 100$ mesh (bottom).}}\label{fign_mean_std}
\end{figure}

Finally to show the effectiveness of our approach in approximating challenging quantities of interest we compute the parametric Von-Mises stress for the optimal design using the bi-resolution approach and compare it with Monte Carlo simulations. Figure~\ref{fign_stress} shows a realization of high- and low-resolution Von-Mises stresses associated with $100 \times 100$  and $10 \times 10$ meshes on one of $43$ quadrature points. It is again observed that the low-resolution mesh provides no insight on the stress distribution however using it in the bi-resolution framework in conjunction with $10$ high-resolution stress distribution we can approximate the rest of $33$ high resolution stresses.

\begin{figure}[!h]
\centering
\begin{tabular}{c c}
\vspace{-0.5cm}
\hspace{-0.4cm}\includegraphics[width=3.0in]{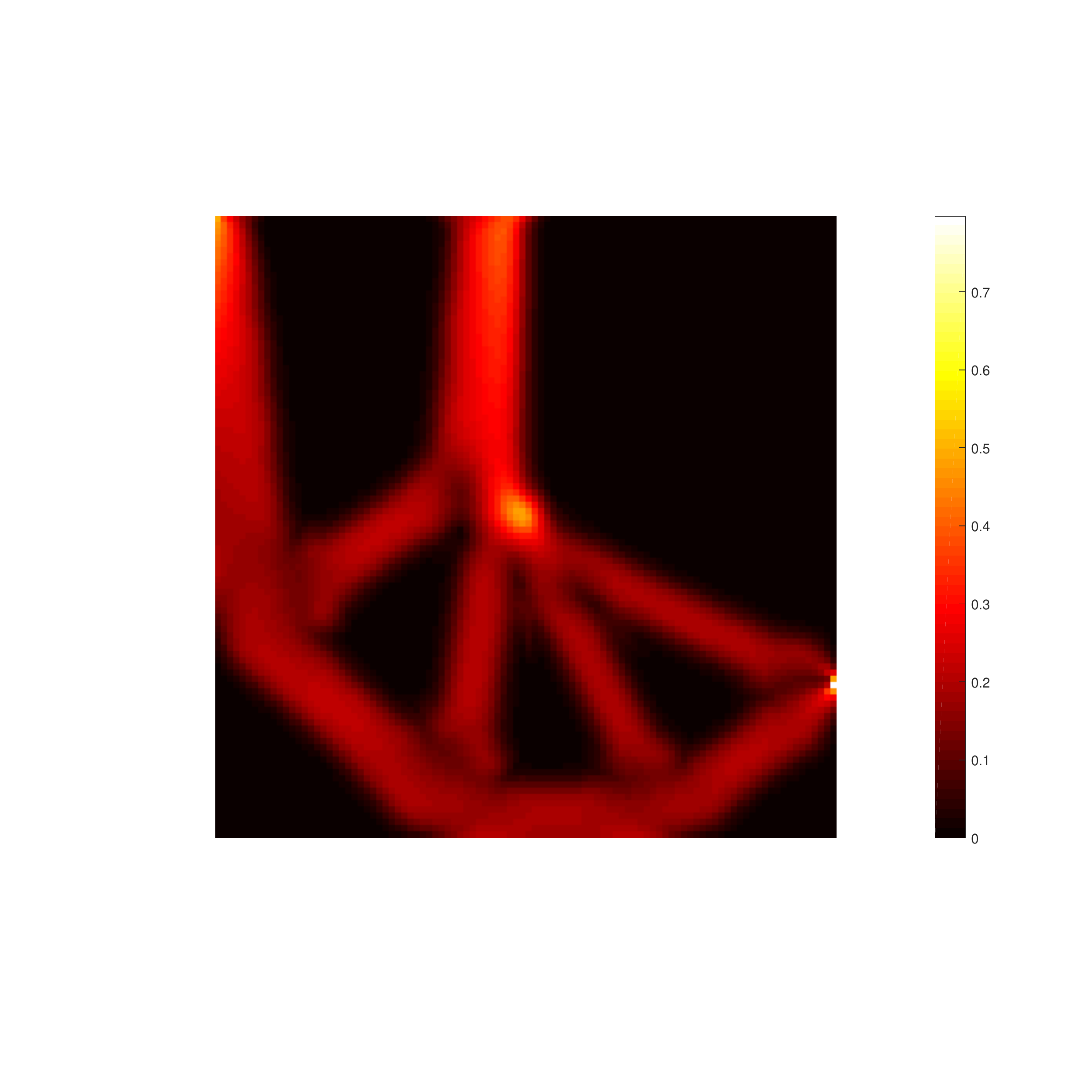}&\hspace{-0.8cm}\includegraphics[width=3.0in]{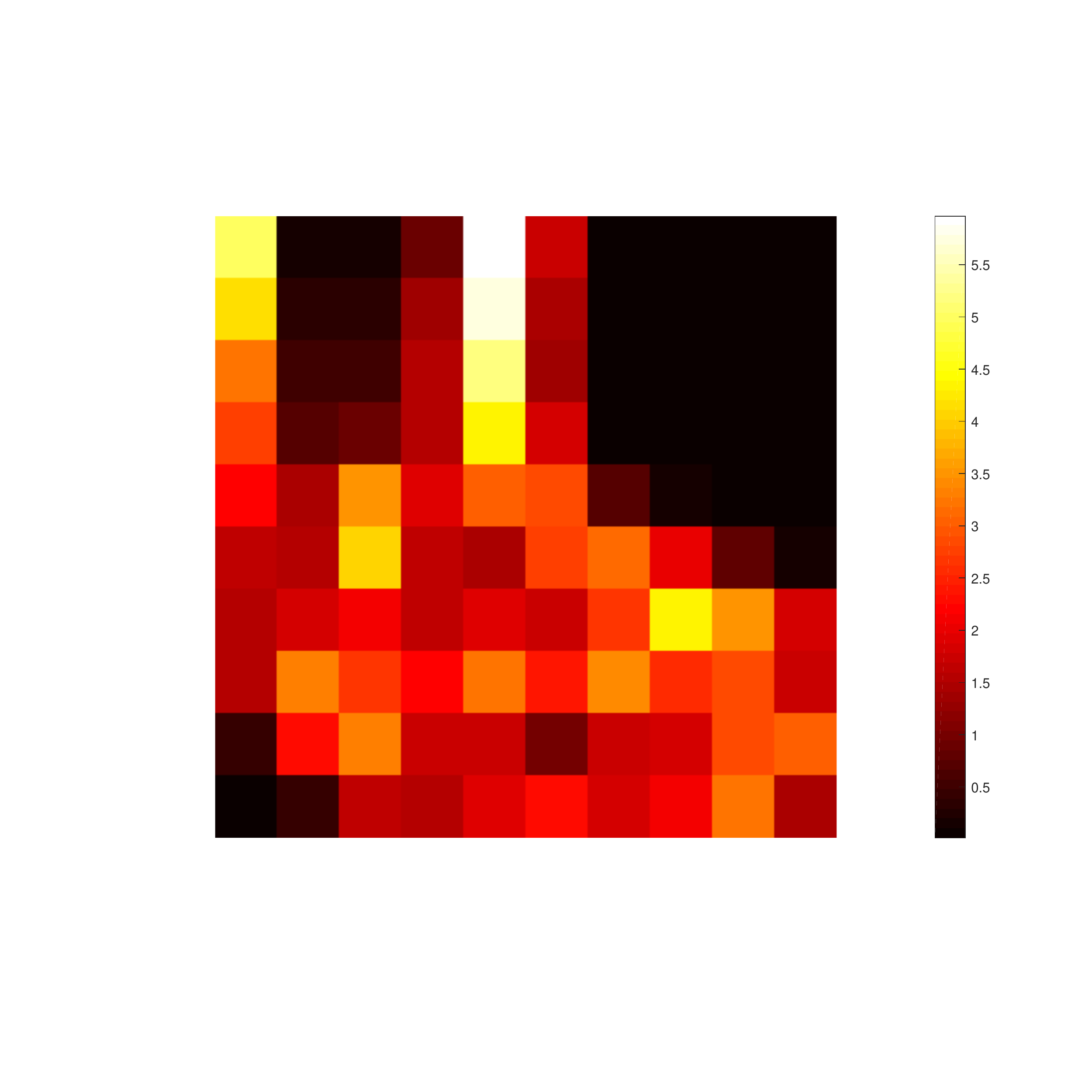}\\
\end{tabular}
\caption{{A realization of high- and low-resolution Von-Mises stresses associated with $100 \times 100$  and $10 \times 10$ meshes.}}\label{fign_stress}
\end{figure}

We use the bi-resolution quadrature samples to compute the mean and standard deviation of spatially averaged stresses. In addition we perform $1000$ high-resolution Monte Carlo simulations (associated with $1000$ samples of threshold random field cf. Equation~\eqref{pcen216}) to find the mean and standard deviation of the same quantity of interest. To investigate the error we use high-level sparse grid with $n=1217$ nodes as the true solution~\cite{HeissQ}. Figure~\ref{fign_stress_err} shows the convergence of the MC simulations and the error between the bi-resolution approximation and the true solution. It is again seen that the bi-resolution approximation of stress with only $10$ high-resolution simulations outperform MC simulations with much larger number of high-resolution simulations.

\begin{figure}[!h]
\centering
\includegraphics[width=6.5in]{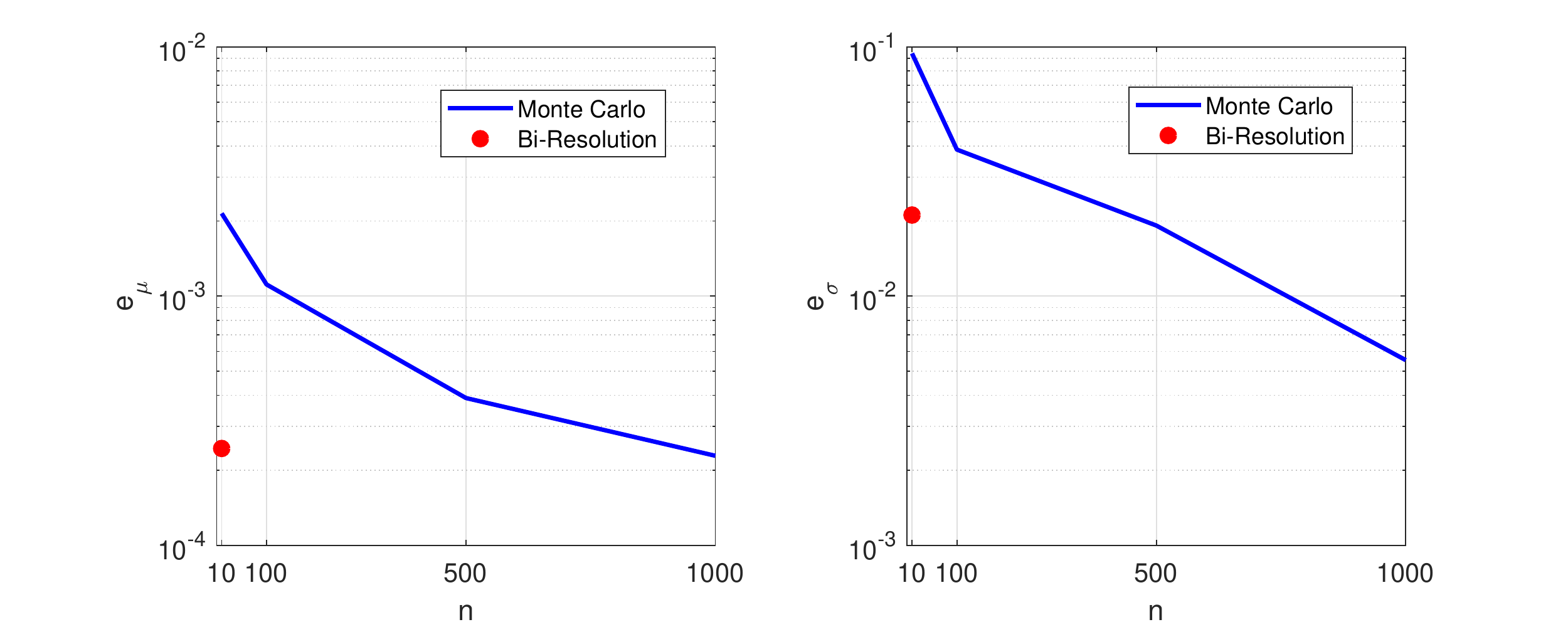}
\caption{{Error in mean (left) and standard deviation (right) of spatially averaged Von-Mises stress. The abscissa $n$ is the number of high-resolution simulations.}}\label{fign_stress_err}
\end{figure}

\section{Conclusion}\label{S5}

We present a systematic approach for parametric topology optimization with multi-resolution finite element models. The parametric variation is identified from an inexpensive low-resolution model where large number of simulations can be performed. The identified links among low-resolution samples are used to approximate the high-resolution parameter space which now only requires a limited number of high-fidelity simulations. We use the bi-fidelity surrogate of displacement for compliance-based topology optimization on benchmark problems with loading and geometric variabilities. An error estimate for bi-fidelity approximation of compliance is derived which certifies the convergence of approach. Numerical results are provided to delineate the convergence analysis. It is shown that the bi-resolution approach yields almost identical design to single resolution optimization with significantly smaller computational cost especially in expensive problems such as topology optimization under manufacturing uncertainty.

\section*{acknowledgements}
This research was sponsored by ARL under Cooperative Agreement Number W911NF-12-2-0023. The views and conclusions contained in this document are those of the authors and should not be interpreted as representing the official policies, either expressed or implied, of ARL or the U.S. Government. The U.S. Government is authorized to reproduce and distribute reprints for Government purposes notwithstanding any copyright notation herein. The first and third authors are partially supported by AFOSR FA9550-15-1-0467. The third author is partially supported by DARPA EQUiPS N660011524053 and NSF DMS 1720416.


\end{document}